\def\1{\bm{1}}
\def\vb{{\bm{b}}}
\def\vc{{\bm{c}}}
\def\ve{{\bm{e}}}
\def\vh{{\bm{h}}}
\def\vu{{\bm{u}}}
\def\vv{{\bm{v}}}
\def\vw{{\bm{w}}}
\def\vx{{\bm{x}}}
\def\vy{{\bm{y}}}
\def\vz{{\bm{z}}}
\def\mA{{\bm{A}}}
\def\mB{{\bm{B}}}
\def\mC{{\bm{C}}}
\def\mG{{\bm{G}}}
\def\mH{{\bm{H}}}
\def\mI{{\bm{I}}}
\def\mJ{{\bm{J}}}
\def\mL{{\bm{L}}}
\def\mQ{{\bm{Q}}}
\def\mR{{\bm{R}}}
\def\mU{{\bm{U}}}
\def\mW{{\bm{W}}}
\def\mX{{\bm{X}}}
\DeclareMathAlphabet{\mathsfit}{\encodingdefault}{\sfdefault}{m}{sl}
\SetMathAlphabet{\mathsfit}{bold}{\encodingdefault}{\sfdefault}{bx}{n}
\def\gD{{\mathcal{D}}}
\def\gM{{\mathcal{M}}}
\def\gN{{\mathcal{N}}}
\def\gS{{\mathcal{S}}}
\def\gV{{\mathcal{V}}}
\def\sP{{\mathbb{P}}}
\def\sR{{\mathbb{R}}}
\newcommand{\E}{\mathbb{E}}
\def\tr{\mathrm{tr}}
\def\diag{\mathrm{diag}}
\newcommand{\norm}[1]{\left\|#1\right\|}
\def\tr{\mathrm{tr}}
\def\Broyd{\mathrm{Broyd}}
\def\BFGS{\mathrm{BFGS}}
\def\DFP{\mathrm{DFP}}
\begin{document}

\title{Explicit Convergence Rates of Greedy and Random Quasi-Newton Methods}  

\author{\name Dachao Lin
	\email lindachao@pku.edu.cn \\
	\addr Academy for Advanced Interdisciplinary Studies \\ 
	Peking University \\
	Beijing, China
	\AND
	\name Haishan Ye\thanks{Corresponding Author.} \email yehaishan@xjtu.edu.cn \\
	\addr School of Management \\ 
	Xi'an Jiaotong University \\
    Xi'an, China
	\AND
	\name Zhihua Zhang \email zhzhang@math.pku.edu.cn \\
	\addr School of Mathematical Sciences \\
	Peking University \\
	Beijing, China
}

\editor{Philipp Hennig}
\maketitle

\begin{abstract}%
    Optimization is important in machine learning problems, and quasi-Newton methods have a reputation as the most efficient numerical methods for smooth unconstrained optimization. 
	In this paper, we study the explicit superlinear convergence rates of quasi-Newton methods and address two open problems mentioned by \citet{rodomanov2021greedy}.
	First, we extend \citet{rodomanov2021greedy}'s results to random quasi-Newton methods, which include common DFP, BFGS, SR1 methods. 
	Such random methods employ a random direction for updating the approximate Hessian matrix in each iteration.
	Second, we focus on the specific quasi-Newton methods: SR1 and BFGS methods.
	We provide improved versions of greedy and random methods with provable better explicit (local) superlinear convergence rates.
	Our analysis is closely related to the approximation of a given Hessian matrix, unconstrained quadratic objective, as well as the general strongly convex, smooth, and strongly self-concordant functions. 
\end{abstract}

\begin{keywords}
	quasi-Newton methods, superlinear convergence, local convergence, rate of convergence, Broyden family, SR1, BFGS, DFP
\end{keywords}

\section{Introduction}
Many machine learning problems can be formulated to the minimization of an objective defined as the expectation over a set of random functions
\citep{liu1989limited, bottou2005line, shalev2008svm, mokhtari2014quasi, mokhtari2015global}.
Specifically, given the training sample $\vz \sim \gD$, where $\gD$ is the data distribution, we consider an optimization function $f: \sR^d \to \sR$:
\[ \min_{\vx\in\sR^d} f(\vx) = \E_{\vz\sim \gD} \; \ell(\vx, \vz) + R(\vx), \]
where $\ell(\vx, \vz)$ is the loss with respect to the training sample $\vz$, and $R(\vx)$ is some regularization function, such as $\norm{\vx}^2_2$.
When $\gD$ is the empirical distribution of training samples $\{\vz_i\}_{i=1}^n$, we could recover the classical finite-sum empirical risk minimization:
\[ \min_{\vx\in\sR^d} f(\vx) = \frac{1}{n}\sum_{i=1}^n \ell(\vx, \vz_i) + R(\vx). \]
Such a finite-sum formulation encapsulates a wide variety of machine learning problems including least squares regression, support vector machines (SVM), logistic regression, neural networks, and graphical models.

Previous methods mainly use first-order methods by evaluating objective function gradients $\nabla f(\vx)$, such as gradient descent, stochastic gradient descent, accelerated gradient descent \citep{nesterov2003introductory}, Adagrad \citep{duchi2011adaptive}, Adam \citep{kingma2014adam}, etc.
These methods dominate the current optimization methods of machine learning problems, and have affordable computation complexity in each iteration.  
However, these first-order methods generally only have a linear or sublinear convergence rate even if the objective has nice properties.

Recently, second-order methods have also received great attention due to their fast convergence rates compared to first-order methods. 
But second-order methods, such as Newton's method, are impractical because the exact Hessian matrix $\nabla^2 f(\vx)$ needs high computation cost in general cases.
Common wisdom proposes quasi-Newton methods by replacing Hessian matrices with some reasonable approximations. 
The approximation is updated in iterations based on some special formulas from the previous variation.

Quasi-Newton methods have a broad application in machine learning problems \citep{Bordes2009, yu10a, mokhtari2015global, yuan2020convergence, ye2020, Liu2021}.
There exist various quasi-Newton algorithms with different Hessian approximations. 
The three most popular versions are the \textit{Davidon-Fletcher-Powell (DFP) method} \citep{fletcher1963rapidly, davidon1991variable}, the \textit{Broyden-Fletcher-Goldfarb-Shanno (BFGS) method} \citep{broyden1970convergence2, broyden1970convergence,fletcher1970new, goldfarb1970family, shanno1970conditioning}, and the \textit{Symmetric Rank 1 (SR1) method} \citep{broyden1967quasi,davidon1991variable}, all of which belong to the Broyden family \citep{broyden1967quasi} of quasi-Newton algorithms.
The most attractive property of quasi-Newton methods compared to the classical first-order methods, is their superlinear convergence, which can trace back to the 1970s \citep{powell1971convergence, broyden1973local, dennis1974characterization}. 
However, the superlinear convergence rates provided in prior work are asymptotic \citep{stachurski1981superlinear, griewank1982local, byrd1987global, yabe1996local, kovalev2020fast}. 
The results only show that the ratio of successive residuals tends to zero as the running iterations approach to infinity, i.e.,
\[ \lim_{k \to +\infty} \frac{\norm{\vx_{k+1}-\vx_*}}{\norm{\vx_k-\vx_*}}=0, \text{ or } \norm{\vx_{k+1}-\vx_*}=o(\norm{\vx_{k}-\vx_*}), \]
where $\{\vx_k\}$ is iterative update sequence, $k$ is the iteration counter, and $\vx_*$ is the optimal solution.
It is unknown whether the residuals converge like $O(c^{k^2}), O(k^{-k})$, where $c\in(0, 1)$ is some constant.
Hence, the theory is inadequate and there still lacks of a specific superlinear convergence rate.
Additionally, machine learning problems have requirement of the explicit convergence rates to compare the performance and design better algorithms for applications. 
Therefore, \emph{to give a better guidance of quasi-Newton methods in machine learning problems, we are still interested in the explicit rates of quasi-Newton methods.}

Recently, \citet{rodomanov2021greedy} gave the first explicit local superlinear convergence for their proposed new quasi-Newton methods. 
They introduced greedy quasi-Newton updates by greedily selecting from basis vectors to maximize a certain measure of progress, and established an explicit non-asymptotic bound on the local superlinear convergence rate correspondingly.
However, as \citet{rodomanov2021greedy} stated, ``greedy methods require additional information beyond just the gradient of the objective function.''
A natural idea might be to replace the greedy strategy with a randomized one. 
Indeed, the strategy of randomness  has almost the same performance as the greedy one, which has been observed in \citet{rodomanov2021greedy}'s experiments. 
Therefore, one can expect that it should be possible to establish similar theoretical results about its superlinear convergence, but they did not provide theoretical guarantees. 
This raises the issue:
\emph{can we give explicit superlinear rates for random quasi-Newton methods theoretically?}
In addition, \citet{rodomanov2021greedy}'s proofs are mainly applicable to the DFP methods because they reduced all possible Broyden family to the DFP update based on the monotonicity property (see Lemma \ref{lemma:monotonic}).
However, the SR1 and BFGS updates are more popular and faster than the DFP update in practice, which also has been verified in their experiments.
Thus, it is natural to ask 
\emph{can we obtain separate superlinear rates for different quasi-Newton methods?}

In this work, we solve the above two problems rigorously. We extend \citet{rodomanov2021greedy}'s results into random quasi-Newton methods, and improve the local superlinear convergence rates by our revised greedy or random SR1 and BFGS methods. 
We present our contribution in detail as follows:
\begin{itemize}
	\item First, we extend \citet{rodomanov2021greedy}'s results to random quasi-Newton methods, which use a random direction for updating the approximate Hessian matrix. Our superlinear convergence rate is of the form  $(1-\frac{1}{d\varkappa+1})^{k(k-1)/2}$ with high probability, which is similar as the greedy-type methods proposed by \citet[Theorem 4.9]{rodomanov2021greedy}. Here, $\varkappa$ is the condition number of the objective function, $k$ is the current iteration, and $d$ is the dimension of parameters. 
	\item Second, for specific quasi-Newton methods, including SR1 and BFGS methods, we provide improved versions of greedy and random methods.
	We show that for approximating a fixed Hessian matrix, both the methods share a faster condition-number-free convergence. 
	Particularly, we can obtain the superlinear convergence rate $O((1{-}\frac{k}{d})_+)$ for the SR1 update, and the linear convergence rate $O((1{-}\frac{1}{d})^k)$ for the BFGS update, where $(x)_+=\max\{x,0\}$. 
	Both the findings improve the original convergence rate $O((1{-}\frac{1}{d \varkappa})^k)$ by \citet[Theorem 2.5]{rodomanov2021greedy}.
	\item Third, we extend our analysis to a practical scheme, showing (local) superlinear convergence under our proposed greedy/random SR1 and BFGS update, when applied to unconstrained quadratic objective or strongly self-concordant functions. 
	We list our results in Table \ref{table:res} with the same formulation as the work of \cite{rodomanov2021greedy}. 
	Note that in general, the convergence goes through two phases. 
	The first phase lasts for $k_0$ iterations, and only has a linear convergence rate $O((1{-}\frac{1}{2\varkappa})^{k_0})$. 
	The second phase has a superlinear convergence rate $O((1{-}\frac{1}{d})^{k(k-1)/2})$.
	Our revised bound takes fewer first-phase iterations $k_0$ as well as a faster (condition-number-free) superlinear convergence rate in the second phase compared to \cite{rodomanov2021greedy}'s results. 
\end{itemize}

\begin{table*}[t] 	
	\centering
	\begin{tabular}{ccc}
		\toprule
		Quasi-Newton Methods & Superlinear Rates & $k_0$\\
		\midrule
		\makecell{Greedy Broyden \\ \cite[]{rodomanov2021greedy}} & $\left(1-\frac{1}{d\varkappa}\right)^{k(k-1)/2}\left(\frac{1}{2}\right)^k\! \left(1{-}\frac{1}{2\varkappa}\right)^{k_0}$ & $O\left(d\varkappa\ln(d\varkappa)\right)$ \\  
		\midrule
		\makecell{Random Broyden \\ (Corollary \ref{cor:random-gel})} & $\big(1{-}\frac{1}{d\varkappa{+}1}\big)^{k(k{-}1)/2}\left(\frac{1}{2}\right)^k\! \left(1{-}\frac{1}{2\varkappa}\right)^{k_0}$ & $O\left(d\varkappa\ln(d\varkappa/\delta)\right)$ \\  
		\midrule
		\makecell{Greedy BFGS*/SR1 \\ (Corollary \ref{cor:gel-sr1-bfgs})} & $\left(1-\frac{1}{d}\right)^{k(k-1)/2}\left(\frac{1}{2}\right)^k\! \left(1{-}\frac{1}{2\varkappa}\right)^{k_0}$  & $O\left(\left(d+\varkappa\right) \ln(d\varkappa)\right)$ \\
		\midrule
		\makecell{Random BFGS/SR1 \\ (Corollary \ref{cor:gel-sr1-bfgs})} & $\big(1{-}\frac{1}{d+1}\big)^{k(k-1)/2}\left(\frac{1}{2}\right)^k\! \left(1{-}\frac{1}{2\varkappa}\right)^{k_0}$  & $O\left(\left(d+\varkappa\right) \ln(d\varkappa/\delta)\right)$ \\
		\bottomrule
	\end{tabular}
\caption{Comparison of the existing specific superlinear convergence rates of the random or greedy quasi-Newton methods in the view of $\lambda_f(\cdot)$ (shown in Eq.~\eqref{eq:lambda}) under strongly self-concordant objective, where $d$ is the dimension of parameters, $\varkappa$ is the condition number of the objective function, $k_0$ is the iteration number last for the first phase and $k$ is the iteration number of the subsequent second phase. 
For the randomized methods, the presented rates hold with probability at least $1-\delta$. 
(*): Our greedy BFGS method is not practical.}
	\label{table:res}
\end{table*}

\subsection{Other Related Work}
In addition to the work of \citet{rodomanov2021greedy}, there are other results of explicit local superlinear convergence analysis along this line of research.
\citet{rodomanov2021rates}  analyzed the well-known DFP and BFGS methods, which are based on a standard Hessian update direction through the previous variation. They demonstrated faster initial convergence rates, while slower final rates compared to \citet{rodomanov2021greedy}'s results.
\citet{Rodomanov_2021} improved \citet{rodomanov2021rates}'s results by reducing the dependence of the condition number $\varkappa$ to $\ln\varkappa$, though having similar worse long-history behavior.
\citet{jin2020non} provided a non-asymptotic dimension-free superlinear convergence rate of the original Broyden family when the initial Hessian approximation is also good enough. However,  the two issues mentioned earlier remain open. 

The remainder of this paper is organized as follows. 
We present preliminaries in Section \ref{sec:prel}, and discuss the rates of random quasi-Newton methods in Section \ref{sec:random}.
In Section \ref{sec:faster}, we show faster superlinear convergence rates of our revised greedy/random SR1 and BFGS methods.
Then in Section \ref{sec:compa}, we show comparison with the work of \cite{rodomanov2021greedy} in detail. We give some empirical results in Section~\ref{sec:exper}.
Finally,  we conclude our results in Section \ref{sec:conclude}. 				
\section{Preliminaries}\label{sec:prel}

First of all, we present some  notation.  
We denote vectors by lowercase bold letters (e.g., $ \vu, \vx$), and matrices by capital bold letters (e.g., $ \mW = [w_{i j}] $).
We use $\ve_1, \ldots, \ve_d$ for the $d$-dimensional standard coordinate directions,
and $(x)_+=\max\{x, 0\} $ for $x \in \sR$. 
Let $\lambda_{\max}(\mA) = \lambda_1(\mA)\geq \dots \geq \lambda_d(\mA)$ be the eigenvalues of a real symmetric matrix $\mA\in\sR^{d\times d}$, and $\|\cdot\|$ denotes the standard Euclidean norm ($\ell_2$-norm) for vectors, or induced $\ell_2$-norm (spectral norm) for a given matrix: $\norm{\mA} = \sup_{\norm{\vu}=1, \vu\in\sR^d}\norm{\mA\vu}$. 
We denote $\gS^{d-1} := \{\vx \in \sR^d: \|\vx\|=1\}$ as the standard Euclidean sphere in $\sR^d$, and $\mathrm{Unif}(\gS^{d-1})$ as the uniform distribution from $\gS^{d-1}$. We use $\gN(\bm{0}, \mI_d)$ as the standard Gaussian distribution, where $\mI_d\in \sR^{d \times d}$ is the identity matrix.

For two symmetric matrices $\mA$ and $\mB \in \sR^{d \times d}$, we denote $\mA \succeq \mB$ (or $\mB \preceq \mA$) if $\mA-\mB$ is a positive semi-definite matrix, and $\mA \succ \mB$ (or $\mB \prec \mA$) if $\mA-\mB$ is a positive definite matrix.
Following \citet{rodomanov2021greedy}'s notation,
for a given positive definite matrix $\mA$ (i.e., $\mA \succ 0$), we induce a pair of conjugate Euclidean norms: $\| \vx \|_{\mA}  := \sqrt{\vx^\top \mA \vx}$ and  $\| \vx \|_{\mA}^* := \sqrt{\vx^\top \mA^{-1} \vx}$. 
When $\mA = \nabla^2 f(\vx) \succ 0$ for some $\vx \in \sR^d$, we prefer to use notation $\|\cdot\|_{\vx}$ and $\|\cdot\|_{\vx}^*$, provided that there is no ambiguity with the reference function $f$.

Next, we introduce some common definitions used in this paper below.
\begin{definition}[Strongly convex and smooth]
	A twice differentiable function $f\colon \sR^{d}\to \sR$ is $\mu$-strongly convex and $L$-smooth ($\mu, L > 0$), if
	\[ \mu \mI_d \preceq \nabla^2 f(\vx) \preceq L \mI_d, \; \forall \vx\in\sR^d. \]
    Additionally, the condition number of a $\mu$-strongly convex and $L$-smooth function is $\varkappa := L/\mu$.
\end{definition}

We also need the same assumption of \textit{strongly self-concordancy} followed by \citet{rodomanov2021greedy}. And \citet[Section 4]{rodomanov2021greedy} have already mentioned several properties and examples of strongly self-concordant functions, such as a strongly convex function with Lipschitz continuous Hessians.
\begin{definition}[Strongly self-concordant]
	A twice differentiable function $f\colon \sR^{d}\to \sR$ is $M$-strongly self-concordant ($M>0$), if the Hessians are close to each other in the sense that 
	\[ \nabla^2 f(\vy)-\nabla^2 f(\vx) \preceq M\|\vy-\vx\|_{\vz} \nabla^2 f(\vw), \ \forall \vx, \vy, \vz, \vw \in \sR^d. \]
\end{definition}

Finally, we recall the rate of convergence used in this paper.
\begin{definition}[R-Linear/Superlinear convergence] 
Suppose a scalar sequence $\{x_k\}$ converges to $0$ with
	\[ \lim_{k \to +\infty} \frac{|x_{k+1}|}{|x_k|} = q \in [0, 1). \]
	Now suppose another sequence $\{y_k\}$ converges to $y^*$ and satisfies that $|y_k-y^*|\leq |x_k|, \forall k \geq 0$.
	We say $\{y_k\}$ converges superlinearly if and only if $q=0$, linearly if and only if $q \in (0, 1)$.
\end{definition}

\subsection{Notation for Convergence Analysis}

For convergence analysis, we introduce two measures which describe the approximation precision of the positive definite matrices:
\begin{equation}\label{eq:sigmaA}
    \sigma_{\mA}(\mG) := \tr\left[\left(\mG-\mA\right)\mA^{-1}\right] = \tr\left(\mG\mA^{-1}\right)-d, \text{ where } \mG\succeq \mA \succ \bm{0},
\end{equation}
and
\begin{equation}\label{eq:tauA}
\tau_{\mA}(\mG):=\tr(\mG-\mA), \text{ where } \mG\succeq \mA \succ \bm{0}.
\end{equation}
Moreover, we estimate the convergence rate of a strongly convex objective $f(\vx)$ by the local norm of the gradient:
\begin{equation}\label{eq:lambda}
\lambda_f(\vx) := \norm{\nabla f(\vx)}_{\vx}^* = \sqrt{\nabla f(\vx)^\top [\nabla^2 f(\vx)]^{-1} \nabla f(\vx)}, \; \vx\in\sR^d.
\end{equation}
Note that $\sigma_{\mA}(\mG)$ and $\lambda_f(\vx)$ are also introduced in the work of  \cite{rodomanov2021greedy}.
When applied to the update sequences $\{\vx_k\}$ and $\{\mG_k\}$ from a specific algorithm, we also denote the following notation for brevity:
\begin{equation}\label{eq:lam-sig}
	\lambda_k:=\lambda_f(\vx_k), \; \sigma_k := \sigma_{\nabla^2 f(\vx_k)}(\mG_k) \; \mbox{ and } \;  \tau_k := \tau_{\nabla^2 f(\vx_k)}(\mG_k).
\end{equation}

\subsection{Quasi-Newton Updates}
Before starting our theoretical results,  we briefly review a class of quasi-Newton updating rules for approximating a positive definite matrix $\mA \in \sR^{d\times d}$. We follow the definition by \cite{rodomanov2021greedy}, employing the following family of updates which describes the Broyden family \cite[Section 6.3]{nocedal2006numerical} of quasi-Newton updates, parameterized by a scalar $\tau\in\sR$. 

\begin{definition}
	Let $\mG \succeq \mA \succ \bm{0}$. For any $\vu \in\sR^d$, if $ \mG \vu = \mA\vu $, we define $ \Broyd_{\tau} (\mG, \mA, \vu):= \mG $. 
	Otherwise, i.e., $\mG\vu \neq \mA \vu$, we define
	\begin{equation}\label{eq:broyd}
	\begin{aligned}
	\Broyd_{\tau}(\mG,\mA,\vu) := \ & \tau \left[ \mG-\frac{\mA\vu\vu^\top \mG+\mG\vu\vu^\top \mA}{\vu^\top \mA\vu} +\left(\frac{\vu^\top \mG\vu}{\vu^\top \mA\vu}+1\right) \frac{\mA\vu\vu^\top \mA}{\vu^\top \mA\vu} \right] \\
	& + \left(1-\tau\right) \left[\mG-\frac{(\mG-\mA)\vu\vu^\top(\mG-\mA)}{\vu^\top(\mG-\mA)\vu}\right].
	\end{aligned}
	\end{equation}
\end{definition}

As mentioned in the work of \citet{rodomanov2021greedy}, we can recover several well-known quasi-Newton methods for several choices of $\tau$. 

For $\tau=0$, Eq.~\eqref{eq:broyd} corresponds to the well-known SR1 update:
\begin{equation}\label{eq:sr1}
    \mathrm{SR1}(\mG,\mA,\vu) := \mG-\frac{(\mG-\mA)\vu\vu^\top(\mG-\mA)}{\vu^\top(\mG-\mA)\vu}, \text{ if } \mG\vu \neq \mA \vu,
\end{equation}
and for $\tau=1$, it corresponds to the well-known DFP update:
\begin{equation}\label{eq:dfp}
    \DFP(\mG,\mA,\vu) := \mG-\frac{\mA\vu\vu^\top \mG+\mG\vu\vu^\top \mA}{\vu^\top \mA\vu} +\left(\frac{\vu^\top \mG\vu}{\vu^\top \mA\vu}+1\right) \frac{\mA\vu\vu^\top \mA}{\vu^\top \mA\vu}, \text{ if } \vu\neq \bm{0}.
\end{equation}
Finally, when $\tau = \frac{\vu^\top \mA\vu}{\vu^\top \mG\vu}\in [0,1]$, we recover the famous BFGS update\footnote{See Eq.~(2.6) in the work of \cite{rodomanov2021greedy} for derivation.}:
\begin{equation}\label{eq:bfgs}
    \BFGS(\mG,\mA,\vu) :=\mG-\frac{\mG\vu\vu^\top \mG}{\vu^\top \mG\vu}+\frac{\mA \vu\vu^\top \mA}{\vu^\top \mA\vu}, \text{ if } \vu\neq \bm{0}.
\end{equation}

The Broyden family has matrix monotonicity below, showing the relationship among these quasi-Newton methods. 
\begin{lemma} \emph{\citep[Lemmas 2.1 and  2.2]{rodomanov2021greedy}} \label{lemma:monotonic}
	If $\bm{0} \prec \mA \preceq \mG \preceq \eta \mA$ for some $\eta\geq 1$, then we have for any $\vu\in\sR^d$, and $\tau_1,\tau_2\in\sR$ with $\tau_1\leq \tau_2$ such that
	\[ \Broyd_{\tau_1}(\mG,\mA,\vu) \preceq \Broyd_{\tau_2}(\mG,\mA,\vu). \]
	And for any $\tau \in [0, 1]$, we have $ \mA \preceq \Broyd_{\tau}(\mG, \mA, \vu) \preceq \eta \mA $.
\end{lemma}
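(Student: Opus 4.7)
My plan splits along the two claims of the lemma, with the whole argument hinging on one clean algebraic identity. The starting observation is that the definition \eqref{eq:broyd} is \emph{affine} in $\tau$, so
\begin{equation*}
\Broyd_{\tau_2}(\mG,\mA,\vu) - \Broyd_{\tau_1}(\mG,\mA,\vu) = (\tau_2 - \tau_1)\bigl[\DFP(\mG,\mA,\vu) - \mathrm{SR1}(\mG,\mA,\vu)\bigr].
\end{equation*}
Monotonicity therefore reduces to showing $\DFP(\mG,\mA,\vu) \succeq \mathrm{SR1}(\mG,\mA,\vu)$ whenever $\mG \succeq \mA$. The case $\mG\vu = \mA\vu$ is trivial by the definition of $\Broyd_\tau$, so I may assume $\mG\vu\neq\mA\vu$; since $\mG-\mA\succeq\bm{0}$, this forces $g := \vu^\top\mG\vu > a := \vu^\top\mA\vu$.

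The main technical step is to combine the two closed-form expressions \eqref{eq:sr1} and \eqref{eq:dfp} over a common denominator and establish the perfect-square identity
\begin{equation*}
\DFP(\mG,\mA,\vu) - \mathrm{SR1}(\mG,\mA,\vu) = \frac{1}{g-a}\Bigl(\mG\vu - \tfrac{g}{a}\mA\vu\Bigr)\Bigl(\mG\vu - \tfrac{g}{a}\mA\vu\Bigr)^{\!\top},
\end{equation*}
which is manifestly positive semidefinite as a positive scalar times a rank-one outer product. This is the heart of the argument; concretely, I would expand both sides in the basis $\{\mA\vu\vu^\top\mA,\,\mG\vu\vu^\top\mG,\,\mA\vu\vu^\top\mG+\mG\vu\vu^\top\mA\}$ and use the identities $\tfrac{1}{a}+\tfrac{1}{g-a}=\tfrac{g}{a(g-a)}$ and $\tfrac{g+a}{a^2}+\tfrac{1}{g-a}=\tfrac{g^2}{a^2(g-a)}$ to recognize the coefficients as those of a square. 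I expect this bookkeeping to be the main obstacle, but it is pure algebra and cannot fail once the target form is identified.

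For the second claim, by the monotonicity just established the sandwich $\mA \preceq \Broyd_\tau(\mG,\mA,\vu) \preceq \eta\mA$ for $\tau\in[0,1]$ follows from the two endpoint bounds $\mA \preceq \mathrm{SR1}(\mG,\mA,\vu)$ and $\DFP(\mG,\mA,\vu) \preceq \eta\mA$, corresponding to $\tau=0$ and $\tau=1$. The lower bound is immediate: setting $\mH := \mG-\mA \succeq \bm{0}$ and $\vw := \mH^{1/2}\vu$ gives
\begin{equation*}
\mathrm{SR1}(\mG,\mA,\vu) - \mA \;=\; \mH^{1/2}\Bigl(\mI_d - \tfrac{\vw\vw^\top}{\|\vw\|^2}\Bigr)\mH^{1/2} \succeq \bm{0},
\end{equation*}
since the middle factor is an orthogonal projector. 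For the upper bound I would first verify by direct expansion the factored representation
\begin{equation*}
\DFP(\mG,\mA,\vu) = \mP\mG\mP^\top + \frac{\mA\vu\vu^\top\mA}{\vu^\top\mA\vu}, \qquad \mP := \mI_d - \frac{\mA\vu\vu^\top}{\vu^\top\mA\vu},
\end{equation*}
and observe that $\mP\mA\mP^\top = \mA - \mA\vu\vu^\top\mA/a$. Conjugating $\mG \preceq \eta\mA$ by $\mP$ then yields $\DFP(\mG,\mA,\vu) \preceq \eta\mA - (\eta-1)\mA\vu\vu^\top\mA/a \preceq \eta\mA$, using $\eta \geq 1$. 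This closes the argument.
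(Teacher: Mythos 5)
Your proof is correct; since the paper cites Lemmas 2.1 and 2.2 of \citet{rodomanov2021greedy} without reproducing a proof, there is no in-paper argument to compare against, but your route is the natural one and matches the structure of the cited source. You correctly exploit that $\Broyd_\tau$ is affine in $\tau$, reducing monotonicity to $\DFP \succeq \mathrm{SR1}$; the perfect-square identity
\[
\DFP(\mG,\mA,\vu) - \mathrm{SR1}(\mG,\mA,\vu) = \tfrac{1}{g-a}\bigl(\mG\vu - \tfrac{g}{a}\mA\vu\bigr)\bigl(\mG\vu - \tfrac{g}{a}\mA\vu\bigr)^{\top}
\]
with $g=\vu^\top\mG\vu$, $a=\vu^\top\mA\vu$ checks out (and the needed strict inequality $g>a$ follows correctly from $\mG\vu\neq\mA\vu$ with $\mG-\mA\succeq\bm{0}$), and the two endpoint bounds are cleanly handled by the orthogonal-projector factorization of $\mathrm{SR1}-\mA$ and the conjugation $\DFP=\mP\mG\mP^\top+\mA\vu\vu^\top\mA/(\vu^\top\mA\vu)$, respectively. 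The only cases you leave implicit but could state for completeness are $\vu=\bm{0}$ and $\mG\vu=\mA\vu$, where $\Broyd_\tau(\mG,\mA,\vu)=\mG$ by definition and both claims are immediate.
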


\subsection{Greedy and Random Quasi-Newton Updates}\label{sec:greedy}

\citet{rodomanov2021greedy} proposed a greedy version for selecting the direction $\vu$:
\begin{equation}\label{eq:gre-u-ori}
(\text{Greedy Broyden}) \ \ \hat{\vu}_{\mA}(\mG) := \mathop{\arg\max}_{\vu\in\{\ve_1, \dots, \ve_d\}} \frac{\vu^\top\mG\vu}{\vu^\top\mA\vu},
\end{equation}
which provides superlinear convergence of the form $\left(1-\frac{1}{d\varkappa}\right)^{k(k-1)/2}$.
They also conducted experiments to verify the performance of their greedy methods, which actually is competitive with the standard versions.
Moreover, they gave random quasi-Newton updates, that is,
\[ (\text{Random Broyden}) \ \ \vu \sim \gD \]
for some predefined distribution $\gD$. 
They observed that choosing a random direction uniformly from the standard Euclidean sphere, i.e., $\vu\sim\text{Unif}(\gS^{d-1})$, does not make superlinear convergence looser, and is only slightly slower than the greedy versions.
However, they did not provide the theory to support their experimental findings. 
We describe the distribution $\gD$ explicitly, and give a rigorous proof of the superlinear rates of such random methods in this paper.

\section{Rates of Random Quasi-Newton Methods}\label{sec:random}
We follow the same roadmap as the work of \citet{rodomanov2021greedy}. 
We begin with the analysis of quasi-Newton methods for approximating a target matrix. Then we extend the scheme to unconstrained quadratic minimization. Finally, we move to general strongly self-concordant functions.

\subsection{Matrix Approximation}\label{subsec:matrix-app-random}
We first consider approximating a positive definite matrix $\mA$ which satisfies 
\begin{equation}\label{eq:ass-A}
\mu \mI_d \preceq \mA \preceq L\mI_d, 
\end{equation}
where $L \geq \mu > 0$, and $\varkappa:=L/\mu$ is the condition number of $\mA$.
We use the measure $\sigma_{\mA}(\mG)$ to describe the closeness between matrix $\mA$ and the current approximate matrix $\mG$. 
When $\mG\vu \neq \mA\vu$, one iteration update of Broyden family leads to
\begin{eqnarray*}
\sigma_{\mA}(\mG_+) &\stackrel{\eqref{eq:sigmaA}\eqref{eq:broyd}}{=}& \tau \left[ \sigma_{\mA}(\mG) - 2 \cdot \frac{\vu^\top \mG\vu}{\vu^\top \mA\vu} +\left(\frac{\vu^\top \mG\vu}{\vu^\top \mA\vu}+1\right) \right] \\
&& + \left(1-\tau\right) \left[\sigma_{\mA}(\mG)-\frac{\vu^\top(\mG-\mA)\mA^{-1} (\mG-\mA)\vu}{\vu^\top(\mG-\mA)\vu}\right] \\
&=& \sigma_{\mA}(\mG) - \left[ \tau \cdot \frac{\vu^\top (\mG-\mA)\vu}{\vu^\top \mA\vu} + \left(1-\tau\right) \cdot \frac{\vu^\top(\mG-\mA)\mA^{-1} (\mG-\mA)\vu}{\vu^\top(\mG-\mA)\vu}\right],
\end{eqnarray*}
where $\mG_+ = \text{Broyd}_{\tau}(\mG, \mA, \vu)$.
Note that we always have $\mG_+ \succeq \mA$ for $\tau\in[0, 1]$ if $\mG\succeq \mA$ from Lemma \ref{lemma:monotonic}.
Thus, by the Cauchy–Schwarz inequality and $\mG\succeq \mA$, we have
\[ \frac{\vu^\top(\mG-\mA)\mA^{-1} (\mG-\mA)\vu}{\vu^\top(\mG-\mA)\vu} \geq \frac{\vu^\top(\mG-\mA)\vu}{\vu^\top\mA\vu}. \]
Hence, we obtain when $\mG\vu \neq \mA\vu$,
\begin{align}\label{eq:sigma-bound}
    \sigma_{\mA}(\mG_+) &\leq \sigma_{\mA}(\mG) - \frac{\vu^\top (\mG-\mA)\vu}{\vu^\top \mA\vu} \stackrel{\eqref{eq:ass-A}}{\leq} \sigma_{\mA}(\mG) - \frac{1}{L} \cdot \frac{\vu^\top (\mG-\mA)\vu}{\vu^\top\vu}  \nonumber \\
    &= \sigma_{\mA}(\mG) - \frac{1}{L} \tr\left[ (\mG-\mA) \cdot \frac{\vu\vu^\top }{\vu^\top\vu} \right].
\end{align}
Moreover, Eq.~\eqref{eq:sigma-bound} trivially holds when $\mG\vu = \mA\vu$.
Therefore, for a random direction $\vu$, we only need $\E \vu\vu^\top/\vu^\top\vu$ to preserve some benign property, which leads to our assumption of the random update distribution.
\begin{equation}\label{eq:randomu}
(\text{Random Broyden}) \ \ \vu\sim\gD, \; s.t. \; \E_{\vu\sim\gD} \, \frac{\vu\vu^\top}{\vu^\top\vu} = \frac{1}{d}\mI_d.
\end{equation}
It is easy to verify that common distributions such as $\mathcal{N}(\bm{0}, \mI_d)$ and $\mathrm{Unif}(\mathcal{S}^{d-1})$ satisfy our requirements. 
Based on Eq.~\eqref{eq:randomu} and update in Algorithm \ref{algo:random-update}, we could show linear convergence of $\mG_k$ to $\mA$ under measure $\sigma_{\mA}(\cdot)$. The proof of Theorem \ref{thm:rand-update} is shown in Appendix \ref{app:miss-prove-random1}.

\begin{algorithm}[t]
	\caption{Random quasi-Newton updates}
	\begin{algorithmic}
		\STATE Initialization: Choose $\mG_0 \succeq \mA$.
		\FOR{$ k \geq 0 $}
		\STATE Choose $\tau_k \in [0, 1]$ and $\vu_k$ from distribution $\gD$ which satisfies Eq.~\eqref{eq:randomu}.
		\STATE Compute $\mG_{k+1} = \text{Broyd}_{\tau_k}(\mG_k, \mA, \vu_k)$.
		\ENDFOR
	\end{algorithmic}
	\label{algo:random-update}
\end{algorithm}

\begin{theorem}\label{thm:rand-update}
	Under the update in Algorithm \ref{algo:random-update} with	a randomly initialized $\mG_0$, such that $\mG_0 \succeq \mA$ always holds,  we have that
	\begin{equation}\label{eq:random-k}
	\forall k\geq 0, \; \mG_k\succeq \mA \; \text{ and } \; 0 \leq \E \sigma_k \leq \left(1-\frac{1}{d\varkappa}\right)^{k} \E \sigma_0, 
	\end{equation}
Therefore, $\E \sigma_{\mA}(\mG_k)$ converges to zero linearly.
\end{theorem}

\subsection{Unconstrained Quadratic Minimization}\label{subsec:quad-random}
\begin{algorithm}[t]
	\caption{Random quasi-Newton methods for quadratic minimization}
	\begin{algorithmic}[1]
		\STATE Initialization: Choose $\vx_0\in\sR^d$ and $\mG_0 \succeq \mA$.
		\FOR{$ k \geq 0 $}
		\STATE Update $\vx_{k+1} = \vx_{k} - \mG_k^{-1} \nabla f(\vx_k)$.
		\STATE Choose $\tau_k \in [0, 1]$ and $\vu_k$ from distribution $\gD$ which satisfies Eq.~\eqref{eq:randomu}.
		\STATE Compute $\mG_{k+1} = \text{Broyd}_{\tau_k}(\mG_k, \mA, \vu_k)$.
		\ENDFOR
	\end{algorithmic}
	\label{algo:quad-random}
\end{algorithm}

Based on the efficiency of random quasi-Newton updates in matrix approximation, we next turn to minimize the strongly convex quadratic function (with a fixed Hessian):
\begin{equation}\label{eq:object-f}
f(\vx) = \frac{1}{2} \vx^\top \mA\vx-\vb^\top\vx, \text{ where } \mu \mI_d \preceq \mA \preceq L\mI_d \text{ with } L, \mu > 0.
\end{equation}
The algorithm is shown in Algorithm \ref{algo:quad-random}. 
As classical quasi-Newton methods do, we need to use the quasi-Newton step for updating the parameters as well as approximating the true Hessian matrix $\mA$. 
Moreover, Algorithm \ref{algo:quad-random} is only for theoretical analysis, while we need to adopt the inverse update rules for $\mG_k^{-1}$ directly in practice.

We adopt $\{\lambda_k\}$ (defined in Eqs.~\eqref{eq:lambda} and \eqref{eq:lam-sig}) to estimate the convergence rate of the objective in Eq.~(\ref{eq:object-f}).
Note that this measure of optimality is directly related to the functional residual.
Indeed, note that $\vx_* = \mA^{-1}\vb $ is the minimizer of Eq.~\eqref{eq:object-f}. Then we obtain
\[f(\vx)-f(\vx_*) = \frac{1}{2}\left(\vx-\vx_*\right)^\top \mA\left(\vx-\vx_*\right) = \frac{1}{2}\left(\mA\vx-\vb\right)^\top \mA^{-1}\left(\mA\vx-\vb\right) \stackrel{\eqref{eq:lambda}}{=} \frac{1}{2}\lambda_f(\vx)^2.  \]
The following lemma shows how $\lambda_f(\cdot)$ changes after one iteration of process in Algorithm \ref{algo:quad-random}.

\begin{lemma} \emph{\citep[Lemma 3.2]{rodomanov2021greedy}} \label{lemma:lambda}
	Let $k\geq 0$, and $\eta_k\geq 1$ be such that $\mA \preceq \mG_k\preceq\eta_k \mA$. Then we have
	$ \lambda_{k+1} \leq \left(1-\frac{1}{\eta_k}\right)\lambda_k \leq (\eta_k-1)\lambda_k$. 
\end{lemma}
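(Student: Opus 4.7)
The plan is to exploit the fact that the Hessian of $f$ in \eqref{eq:object-f} is the constant matrix $\mA$, so the quasi-Newton step interacts cleanly with the gradient. First I would compute the gradient at the next iterate: from $\nabla f(\vx_k)=\mA\vx_k-\vb$ and the update rule $\vx_{k+1}=\vx_k-\mG_k^{-1}\nabla f(\vx_k)$ we get
\begin{equation*}
\nabla f(\vx_{k+1})=\nabla f(\vx_k)-\mA\mG_k^{-1}\nabla f(\vx_k)=(\mI_d-\mA\mG_k^{-1})\nabla f(\vx_k).
\end{equation*}
Substituting into the definition \eqref{eq:lambda} yields
\begin{equation*}
\lambda_{k+1}^2=\nabla f(\vx_k)^\top(\mI_d-\mG_k^{-1}\mA)\mA^{-1}(\mI_d-\mA\mG_k^{-1})\nabla f(\vx_k).
\end{equation*}

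Second, I would perform the symmetrizing change of variables $\vh_k:=\mA^{-1/2}\nabla f(\vx_k)$ and $\mH_k:=\mA^{1/2}\mG_k^{-1}\mA^{1/2}$. Then $\lambda_k^2=\|\vh_k\|^2$, and a direct expansion collapses the above expression to the clean form
\begin{equation*}
\lambda_{k+1}^2=\vh_k^\top(\mI_d-\mH_k)^2\vh_k=\|(\mI_d-\mH_k)\vh_k\|^2.
\end{equation*}

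Third, I would translate the sandwich assumption $\mA\preceq\mG_k\preceq\eta_k\mA$ into eigenvalue control on $\mH_k$. Inverting gives $\mA^{-1}\succeq\mG_k^{-1}\succeq\eta_k^{-1}\mA^{-1}$, and then conjugating by $\mA^{1/2}$ gives $\mI_d\succeq\mH_k\succeq\eta_k^{-1}\mI_d$. Hence the eigenvalues of $\mI_d-\mH_k$ lie in $[0,1-1/\eta_k]$, so $\|\mI_d-\mH_k\|\leq 1-1/\eta_k$. Combining with the display above yields $\lambda_{k+1}\leq(1-1/\eta_k)\lambda_k$. The second inequality $1-1/\eta_k=(\eta_k-1)/\eta_k\leq\eta_k-1$ is immediate from $\eta_k\geq 1$.

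There is no real obstacle here; the only subtle point is choosing the right symmetrization so that the quadratic form in $\vh_k$ becomes an exact square, after which the bound on the operator norm of $\mI_d-\mH_k$ falls out of the spectral assumption. Everything else is routine algebra.
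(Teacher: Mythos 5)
Your proof is correct, and since the paper imports this lemma from \citet{rodomanov2021greedy} without reproving it, there is no in-paper argument to diverge from; your symmetrization via $\vh_k=\mA^{-1/2}\nabla f(\vx_k)$ and $\mH_k=\mA^{1/2}\mG_k^{-1}\mA^{1/2}$ is essentially the standard proof of that cited lemma. The key identities $\nabla f(\vx_{k+1})=(\mI_d-\mA\mG_k^{-1})\nabla f(\vx_k)$, $\lambda_{k+1}=\|(\mI_d-\mH_k)\vh_k\|$, and the spectral bound $\mI_d-\mH_k\preceq(1-1/\eta_k)\mI_d$ obtained from inverting and conjugating the sandwich $\mA\preceq\mG_k\preceq\eta_k\mA$ are all verified correctly.
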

Thus, to estimate how fast $\{\lambda_k\}$ converges to zero, we need the upper bound $\eta_k$, which was already done in Theorem \ref{thm:rand-update}.
Therefore, we can guarantee a superlinear convergence of $\{\lambda_k\}$ (under expectation) using the random quasi-Newton update. The proof of Theorem \ref{thm:random-quad} can be found in Appendix \ref{app:miss-prove-random2}.

\begin{theorem}\label{thm:random-quad}
	Under the update in Algorithm \ref{algo:quad-random} with a randomly initialized $\mG_0$, such that $\mG_0 \succeq \mA$ always holds, we have that $\forall k \geq 0, \lambda_{k+1} \leq \rho_k \lambda_k$, where $\rho_k$ is a certain nonnegative random variable such that
	\[ \E \rho_k \leq \left(1-\frac{1}{d\varkappa}\right)^k \E\sigma_0, \;  \forall k \geq 0. \]
\end{theorem}

For better understanding the convergent behavior without expectation, we show the probabilistic version of Theorems \ref{thm:rand-update} and \ref{thm:random-quad} below, and leave the proof in Appendix \ref{app:miss-prove-random2.5}.

\begin{corollary}\label{cor:quad-ranom-p}
    Under the same assumptions as Theorem \ref{thm:random-quad}, for any $\delta \in (0, 1)$, with probability at least $1-\delta$ over the random directions $\{\vu_k\}$, we have for all $k \geq 0$,
    \[ \sigma_k \leq \frac{2d^2\varkappa^2 \E\sigma_0}{\delta} \left(1-\frac{1}{d\varkappa+1}\right)^{k}  \text{ and } \lambda_k \leq \left(\frac{2d^2\varkappa^2 \E\sigma_0}{\delta}\right)^k  \left(1-\frac{1}{d\varkappa+1}\right)^{k(k-1)/2} \lambda_0. \]
\end{corollary}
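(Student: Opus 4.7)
The plan is a two-stage argument. In the first stage I would convert the in-expectation bound on $\sigma_k$ from Theorem \ref{thm:rand-update} into a simultaneous high-probability bound on $\{\sigma_k\}_{k\geq 0}$ using Markov's inequality and a countable union bound. In the second stage I would deduce the bound on $\lambda_k$ by telescoping the one-step relation $\lambda_{k+1} \leq \sigma_k \lambda_k$, which follows from Lemma \ref{lemma:lambda} once $\sigma_k$ is identified as an upper bound on the largest eigenvalue of the PSD matrix $\mA^{-1/2}(\mG_k - \mA)\mA^{-1/2}$.

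For the first stage, for each $k \geq 0$ set the threshold $C_k := \tfrac{2 d^2 \varkappa^2 \sigma_0}{\delta}\bigl(1 - \tfrac{1}{d\varkappa+1}\bigr)^k$. Since $\sigma_k \geq 0$ almost surely (from $\mG_k \succeq \mA$), Markov's inequality combined with Theorem \ref{thm:rand-update} yields
\[ \Pr(\sigma_k \geq C_k) \leq \frac{\E \sigma_k}{C_k} \leq \frac{\delta}{2(d\varkappa)^2}\left(\frac{1-1/(d\varkappa)}{1-1/(d\varkappa+1)}\right)^k. \]
The key algebraic step is the identity
\[ \frac{1 - 1/(d\varkappa)}{1 - 1/(d\varkappa+1)} \;=\; \frac{(d\varkappa-1)(d\varkappa+1)}{(d\varkappa)^2} \;=\; 1 - \frac{1}{(d\varkappa)^2}, \]
which makes the resulting series geometric with ratio $1 - 1/(d\varkappa)^2$ and total mass $(d\varkappa)^2$. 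Therefore $\sum_{k \geq 0}\Pr(\sigma_k \geq C_k) \leq \delta/2 \leq \delta$, and the union bound yields the first claim on the event $\gE := \{\sigma_k \leq C_k \text{ for all } k \geq 0\}$.

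For the second stage, the definition $\sigma_k = \tr\bigl(\mA^{-1/2}(\mG_k - \mA)\mA^{-1/2}\bigr)$ together with $\mG_k \succeq \mA$ shows that the largest eigenvalue of the PSD matrix $\mA^{-1/2}(\mG_k - \mA)\mA^{-1/2}$ is dominated by its trace, so $\mG_k \preceq (1+\sigma_k)\mA$. Applying Lemma \ref{lemma:lambda} with $\eta_k = 1+\sigma_k$ gives $\lambda_{k+1} \leq (\eta_k-1)\lambda_k = \sigma_k \lambda_k$ whenever $\lambda_k \neq 0$; the degenerate case $\lambda_k = 0$ propagates by Remark \ref{remark:lam-zero} and the inequality reads trivially. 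Telescoping on $\gE$ then yields
\[ \lambda_k \leq \lambda_0 \prod_{j=0}^{k-1}\sigma_j \leq \lambda_0 \left(\frac{2 d^2 \varkappa^2 \sigma_0}{\delta}\right)^{\!k} \bigl(1 - \tfrac{1}{d\varkappa+1}\bigr)^{k(k-1)/2}, \]
which is the second claim.

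The main obstacle is the choice of threshold $C_k$. Using the natural $(1-\tfrac{1}{d\varkappa})^k$ decay from Theorem \ref{thm:rand-update} directly in $C_k$ gives $\sum_k \Pr(\sigma_k \geq C_k) = +\infty$, so the slack must be absorbed into a strictly slower base; the choice $1-\tfrac{1}{d\varkappa+1}$ is the smallest natural one for which the telescoping identity above collapses the sum to a finite geometric series, and it is precisely this cancellation that produces the claimed prefactor $2d^2\varkappa^2/\delta$. The rest of the argument is bookkeeping.
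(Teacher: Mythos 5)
Your proposal is correct, and its first stage coincides with the paper's argument: the paper packages exactly your Markov-plus-union-bound computation (with failure probabilities $\delta(1-q)q^k$, $q=1-1/t^2$, $t=d\varkappa$, which is precisely your base-shift identity $\frac{1-1/t}{1-1/(t+1)}=1-\frac{1}{t^2}$ in disguise) into the general Lemma \ref{lemma:p} and applies it to $\mX_k=\sigma_k$ with $a=\sigma_0$, $t=d\varkappa$ and confidence $1-\delta/2$. Where you genuinely diverge is the second stage. The paper runs the Markov/union-bound machinery a \emph{second} time, on the ratios $\mX_k=\lambda_{k+1}/\lambda_k$ via Theorem \ref{thm:random-quad}, paying another $\delta/2$ and needing a small detour (via Remark \ref{remark:lam-zero}) to make the event $\{\lambda_{k+1}>C_k\lambda_k\}$ well-behaved when $\lambda_k=0$; it then intersects the two events. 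You instead observe that $\lambda_{k+1}\leq\sigma_k\lambda_k$ holds pointwise (this is exactly Eq.~\eqref{eq:eta-r} combined with Lemma \ref{lemma:lambda}, the same chain used inside the proof of Theorem \ref{thm:random-quad}), so the $\lambda_k$ bound is a deterministic consequence of the $\sigma_k$ bound on the single event $\gE$. This is cleaner: it avoids the second union bound, sidesteps the $\lambda_k=0$ bookkeeping inside a probability statement, and in fact delivers the conclusion with probability $1-\delta/2$ rather than $1-\delta$. The paper's more roundabout route is presumably chosen because in the general (non-quadratic) setting of Theorem \ref{thm:random-p} the one-step relation between $\lambda_{k+1}/\lambda_k$ and $\sigma_k$ acquires extra correction factors, so bounding the ratios probabilistically is the template that generalizes; for the quadratic case your shortcut is perfectly valid and yields identical constants.
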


\subsection{Minimization of General Functions}\label{subsec:random-gel}
Next, we consider the optimization of a general machine learning objective: $\min_{\vx\in\sR^d} f(\vx)$, where $ f \colon \sR^d \to \sR $ is an $M$-strongly self-concordant, $\mu$-strongly convex and $L$-smooth function with condition number $\varkappa=L/\mu$.
Our goal is to extend the results in the previous sections, given that the methods can start from a sufficiently good initial point $\vx_0$.
Unlike quadratic minimization, the true Hessian in each step varies. 
In order to ensure that $\mG_{k+1} \succeq \nabla^2 f(\vx_{k+1})$ holds for all $k \geq 0$, we adjust $\mG_k$ before doing quasi-Newton update. 
Instructed from the work of \cite{rodomanov2021greedy}, we also use the \textit{correction strategy}, which enlarges the approximation $\mG_k$ properly shown in Line 4 of Algorithm \ref{algo:general-random}. 
Note that Algorithm \ref{algo:general-random} is only for theoretical analysis. We will use the inverse update rules for $\mG_k^{-1}$ and Hessian-vector products for $\text{Broyd}_{\tau_k}(\tilde{\mG}_k, \nabla^2 f(\vx_{k+1}), \vu_k)$ in practice.
For simplicity, we assume that the constants $ M $ and $ L $ are available, and $d \geq 2$.
We first give convergent results in expectation in Lemma \ref{lemma:random-gel}, and leave the
proof in Appendix \ref{app:miss-prove-random3}.

\begin{algorithm}[t]
	\caption{Random quasi-Newton methods for general strongly self-concordant objective}
	\begin{algorithmic}[1]
		\STATE Initialization: Choose $\vx_0\in\sR^d$ and $\mG_0 \succeq \nabla^2 f(\vx_0)$.
		\FOR{$ k \geq 0 $}
		\STATE Update $\vx_{k+1} = \vx_{k} - \mG_k^{-1} \nabla f(\vx_k)$.
		\STATE Compute $r_k = \|\vx_{k+1}-\vx_k\|_{\vx_k}$ and set $\tilde{\mG}_k = \left(1+Mr_k\right)\mG_k$.
		\STATE Choose $\tau_k \in [0, 1]$ and $\vu_k$ from distribution $\gD$ which satisfies Eq.~\eqref{eq:randomu}.
		\STATE Compute $\mG_{k+1} = \text{Broyd}_{\tau_k}(\tilde{\mG}_k, \nabla^2 f(\vx_{k+1}), \vu_k)$.
		\ENDFOR
	\end{algorithmic}
	\label{algo:general-random}
\end{algorithm}

\begin{lemma}\label{lemma:random-gel}
	Suppose in Algorithm \ref{algo:general-random}, a random initialization $\mG_0$ always satisfies $\nabla^2 f(\vx_0) \preceq \mG_0 \preceq \eta \nabla^2 f(\vx_0)$ for some $\eta \geq 1$, and the initial point $\vx_0$ is sufficiently close to the solution:
	\begin{equation*}
	    M\lambda_0 \leq \frac{\ln2}{4\eta(2d+1)}.
	\end{equation*}
	Then for all $k \geq 0$, we have $\nabla^2 f(\vx_k) \preceq \mG_k \preceq (1+\delta_k)\nabla^2 f(\vx_k)$, where $\delta_k$ is a certain nonnegative random variable such that
	\begin{equation*}
	    \E \delta_k \leq 2d\eta \left(1-\frac{1}{d\varkappa}\right)^k,
	\end{equation*}
	and $\lambda_{k+1} \leq \rho_k\lambda_k$, where $\rho_k$ is a certain nonnegative random variable such that
	\begin{equation*}
	    \E \rho_k \leq 2d \eta \left(1-\frac{1}{d\varkappa}\right)^k.
	\end{equation*}
\end{lemma}

We also show the probabilistic version of Lemma \ref{lemma:random-gel}, which gives superlinear convergence of $\{\lambda_k\}$ and linear convergence of $\{\delta_k\}$ directly, and we leave the proof in Appendix \ref{app:miss-prove-random4}.

\begin{theorem}\label{thm:random-p}
	Under the same assumptions and notation as in Lemma \ref{lemma:random-gel}, for any $\delta \in (0, 1)$, with probability at least $1-\delta$ over the random directions $\{\vu_k\}$, we have for all $k \geq 0$,
	\begin{equation*}
	\delta_k \leq \frac{4d^3\varkappa^2\eta}{\delta} \left(1 - \frac{1}{d\varkappa+1}\right)^{k} \; \text{ and } \; \lambda_{k} \leq \left(\frac{4d^3\varkappa^2\eta}{\delta}\right)^{k} \left(1 - \frac{1}{d\varkappa+1}\right)^{k(k-1)/2} \lambda_0.
	\end{equation*}
\end{theorem}

Additionally, as mentioned by \citet{rodomanov2021greedy}, if we adopt a weaker initialization of $\vx_0$, then the superlinear rate is valid only after certain iterations, i.e., the total iteration count $k' \geq k_0$ for some $k_0$, while only linear convergence is guaranteed for $k' < k_0$. 
We combine both phases into the following corollary, and leave the proof in Appendix \ref{app:miss-prove-random5}.
\begin{corollary}\label{cor:random-gel}
	Suppose in Algorithm \ref{algo:general-random}, $\mG_0=L \mI_d$ and $\vx_0$ satisfies $M\lambda_0 \leq \frac{\ln\frac{3}{2}}{4\varkappa}$, that is, the initial condition on $\lambda_0$ here is weaker than that in Lemma \ref{lemma:random-gel} with $\eta=\varkappa$. Then we could obtain with probability at least $1-\delta$ over the random directions $\{\vu_k\}$, 
	\[ \lambda_{k_0+k} \leq \left(1-\frac{1}{d\varkappa+1}\right)^{k(k-1)/2} \cdot \left(\frac{1}{2}\right)^k \cdot \left(1-\frac{1}{2\varkappa}\right)^{k_0} \cdot \lambda_0, \forall k \geq 0, \]
	where $k_0 = O\left(d\varkappa\ln(d\varkappa/\delta)\right)$.
\end{corollary}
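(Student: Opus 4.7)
The plan is a two-phase bootstrap argument that mirrors the greedy-case template of \cite{rodomanov2021greedy} but invokes Theorem~\ref{thm:random-p} in place of its greedy analogue. I treat the first $k_0$ iterations as a \emph{preparation stage} whose role is to bring the trajectory into the regime where Theorem~\ref{thm:random-p} is applicable, so that in the second stage one can exploit the quadratic-exponent superlinear factor to absorb the polynomial prefactor produced by that theorem.

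\textbf{Phase~1 (linear preparation).} Because $\mG_0 = L\mI_d$, the initial sandwich is $\nabla^2 f(\vx_0) \preceq \mG_0 \preceq \varkappa\nabla^2 f(\vx_0)$, i.e., $\eta_0 = \varkappa$. I would show by induction that $\eta_k \leq (3/2)\varkappa \leq 2\varkappa$ for every $k \geq 0$: by Lemma~\ref{lemma:monotonic} the Broyden step never inflates the sandwich constant, while the correction step combined with the two-sided strong-self-concordant Hessian comparison yields $\eta_{k+1} \leq (1+Mr_k)^2 \eta_k$. Using $r_k \leq \lambda_k$ (which follows from $\mG_k \succeq \nabla^2 f(\vx_k)$) and Lemma~\ref{lemma:lambda}, we have $\lambda_{k+1} \leq (1-1/(2\varkappa))\lambda_k$ while the invariant holds, so $\sum_j M\lambda_j \leq 2\varkappa M\lambda_0 \leq \ln(3/2)/2$ under the hypothesis $M\lambda_0 \leq \ln(3/2)/(4\varkappa)$. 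Therefore $\prod_j(1+Mr_j)^2 \leq \exp(4M\varkappa\lambda_0) \leq 3/2$, closing the induction and yielding $\lambda_{k_0} \leq (1-1/(2\varkappa))^{k_0}\lambda_0$ together with $\mG_{k_0} \preceq 2\varkappa\,\nabla^2 f(\vx_{k_0})$ at the end of Phase~1.

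\textbf{Phase~2 (superlinear stage).} I would restart the analysis at $\vx_{k_0}$ and invoke Theorem~\ref{thm:random-p} with $\eta = 2\varkappa$. Its precondition $M\lambda_{k_0} \leq \ln 2/(8\varkappa(2d+1))$ is implied by Phase~1 whenever $k_0$ is at least a constant times $\varkappa\ln(d\varkappa)$. Theorem~\ref{thm:random-p} then yields, with probability at least $1-\delta$,
\[
\lambda_{k_0+k} \leq \Bigl(\tfrac{8d^3\varkappa^3}{\delta}\Bigr)^{k}\Bigl(1-\tfrac{1}{d\varkappa+1}\Bigr)^{k(k-1)/2}\lambda_{k_0}.
\]
The decisive step is to shift the time index by $k^\star := \Theta(d\varkappa\ln(d\varkappa/\delta))$, absorbing those $k^\star$ steps into the preparation stage so that the superlinear exponent becomes $(k^\star+k)(k^\star+k-1)/2 = k(k-1)/2 + k^\star[(k^\star-1)/2 + k]$. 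The cross term $k^\star\! \cdot\! k/(d\varkappa+1)$ is then large enough to swallow the per-step polynomial prefactor $(8d^3\varkappa^3/\delta)^k$ while leaving a residual $(1/2)^k$ factor, and the pure $k^\star(k^\star-1)/2$ term absorbs the $k^\star$-dependent piece $(8d^3\varkappa^3/\delta)^{k^\star}$ as well as the Phase~1 linear loss $(1-1/(2\varkappa))^{-k^\star}$. Substituting $\lambda_{k_0}\leq(1-1/(2\varkappa))^{k_0}\lambda_0$ and redefining $k_0 \leftarrow k_0+k^\star = \Theta(d\varkappa\ln(d\varkappa/\delta))$ reproduces the corollary's bound.

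The main obstacle is the clean absorption in Phase~2: one must check that a single choice $k^\star = \Theta(d\varkappa\ln(d\varkappa/\delta))$ simultaneously (a) makes the cross term dominate $A + \log 2$ as a coefficient of $k$ for every $k \geq 0$ and (b) makes the quadratic $k^\star(k^\star-1)/(2(d\varkappa+1))$ term dominate $k^\star A + k^\star/(2\varkappa)$, where $A = \log(8d^3\varkappa^3/\delta)$. One must also verify that the Phase~1 sandwich invariant $\mG_k \preceq 2\varkappa\nabla^2 f(\vx_k)$ persists across the enlarged preparation stage, which follows from the same geometric-series bound on $\sum_j M\lambda_j$ (now summed over the longer horizon, still controlled by the same hypothesis on $M\lambda_0$). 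Both constraints collapse into the stated order $k_0 = \Theta(d\varkappa\ln(d\varkappa/\delta))$, and the probability budget $\delta$ is spent only once, at the invocation of Theorem~\ref{thm:random-p}, so no union bound inflation is required.
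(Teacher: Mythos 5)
Your proposal is correct and follows essentially the same two-phase route as the paper: a deterministic linear warm-up (your Phase 1 is exactly Theorem~\ref{app:aux-thm}, which the paper simply cites), followed by one invocation of Theorem~\ref{thm:random-p} at the restarted point and an index shift of length $\Theta(d\varkappa\ln(d\varkappa/\delta))$ that uses the quadratic exponent to absorb the polynomial prefactor. The only (cosmetic) difference is that the paper performs the absorption at the per-step level—choosing $k_2$ so that each step contracts by $\tfrac12(1-\tfrac{1}{d\varkappa+1})^{k}$ and then telescoping—whereas you telescope first and verify the absorption through the cross term $k^\star k$ and the $k^\star(k^\star-1)/2$ term in the shifted exponent; the two computations are equivalent.
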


Hence, we could see random quasi-Newton methods still have explicit superlinear convergence rates. The rate in Corollary \ref{cor:random-gel} is slightly worse than the bounds in greedy methods (see Table \ref{table:res}) due to the probabilistic version, but is comparable overall.

\section{Faster Rates for the BFGS and SR1 Methods}\label{sec:faster}

From Lemma \ref{lemma:monotonic}, if $\mA \preceq \mG \preceq \eta \mA$ for some $\eta \geq 1$, it follows that
\begin{equation*}
	\mA \preceq \text{SR1}(\mG,\mA,\vu) \preceq \text{BFGS}(\mG,\mA,\vu) \preceq \text{DFP}(\mG,\mA,\vu) \preceq \eta \mA.
\end{equation*}
Intuitively, the approximation produced by SR1 is better than that produced by BFGS. And both of them are better than that produced by DFP. 
However, \citet{rodomanov2021greedy} reduced the analysis by casting all updates described by Broyden family ($\tau \in [0, 1]$) into the slowest DFP update ($\tau=1$).
Moreover, SR1 and BFGS methods also have faster numerical performance in practice.
Therefore, \citet{rodomanov2021greedy} conjectured that SR1 and BFGS methods might have faster superlinear convergence rates. 
In this section, we will provide an affirmative answer to this conjecture.

\subsection{Superlinear Convergence for SR1 Update}

We first describe the SR1 update for approximating a fixed positive definite matrix $\mA \in \sR^{d\times d}$. 
Let us now justify the efficiency of update Eq.~\eqref{eq:sr1} in ensuring convergence $\mG$ to $\mA$.
We adopt another measure $\tau_{\mA}(\cdot)$ instead of $\sigma_{\mA}(\cdot)$. According to $\tau_{\mA}(\mG)$, one iteration update leads to
\begin{equation}\label{eq:tau_update}
\tau_{\mA}(\mG_+) \stackrel{\eqref{eq:tauA}\eqref{eq:sr1}}{=} \tau_{\mA}(\mG) - \frac{\vu^\top(\mG-\mA)^2\vu}{\vu^\top(\mG-\mA)\vu}, \; \; \ \mG_+ = \mathrm{SR1}(\mG, \mA, \vu), \text{ if } \mG\vu \neq \mA\vu.
\end{equation}
Now we revise greedy and random methods based on the progress of measure $\tau_{\mA}(\cdot)$.

First, we introduce greedy method proposed in the work of  \cite{rodomanov2021greedy}, that greedily selects $\vu$ from the basis vectors to obtain the largest decrease of $\tau_{\mA}(\mG_+)-\tau_{\mA}(\mG)$:
\[ \bar{\vu}_{\mA}^{raw}(\mG) := \mathop{\arg\max}_{\vu\in\{\bm{e}_1,\dots, \bm{e}_d\}}\frac{\vu^\top(\mG-\mA)^2\vu}{\vu^\top(\mG-\mA)\vu}. \]

However, we may encounter numerical overflow due to division by zero if $\vu^\top(\mG-\mA)\vu$ is nearly $0$. 
Noting that $\mG \succeq \mA$, then from the Cauchy–Schwarz inequality, we have 
\begin{equation}\label{eq:cauchy}
    \frac{\vu^\top(\mG-\mA)^2\vu}{\vu^\top(\mG-\mA)\vu} \geq \frac{\vu^\top(\mG-\mA)\vu}{\vu^\top\vu}.
\end{equation}
Thus we employ a safer adjustment below:
\begin{equation}\label{eq:greedy-sr1}
(\text{Greedy SR1}) \ \ \bar{\vu}_{\mA}(\mG) := \mathop{\arg\max}_{\vu \in \{\bm{e}_1, \ldots, \bm{e}_d\}} \frac{\vu^\top(\mG-\mA)\vu}{\vu^\top\vu} = \mathop{\arg\max}_{\vu\in\{\bm{e}_1,\ldots, \bm{e}_d\}}\vu^\top(\mG-\mA)\vu.
\end{equation}
Moreover, we only need to obtain the diagonal elements of $\mA$ (the current Hessian in practice), thus generally the total complexity is $O(d^2)$ in each iteration\footnote{Note that we can use the Hessian-vector product to obtain $\mA\vu$ (or $\nabla^2 f(\vx)\cdot \vu$) in practice. For most specific optimization problems, e.g., two problems in our experiments, one operation of the exact Hessian-vector product is tractable with $O(d)$ complexity.}, which is acceptable and the same as the classical quasi-Newton methods.

Second, from the proof of the greedy method, we find that the random method by
choosing $\vu$ from a spherically symmetric distribution, e.g.,
\begin{equation}\label{eq:random-sr1}
(\text{Random SR1}) \ \ \vu\sim\mathcal{N}(0, \mI_d) \; \text{ or } \; \vu \sim \mathrm{Unif}(\mathcal{S}^{d-1}),
\end{equation}
also has similar performance and the same running complexity $O(d^2)$ in each iteration.

Next, we will show the convergence result below by estimating the decrease in the measure $\tau_{\mA}(\cdot)$. 
In the following, the expectation considers all the randomness of the directions $\{\vu_k\}$ during iterations, and when applied to the greedy method, we can view it with no randomness for the same notation. We leave the proof of Theorem \ref{thm:sr1-update} in Appendix \ref{app:miss-prove1}. 

\begin{figure}[tp]
	\vspace*{-\baselineskip}
	\begin{minipage}[t]{.49\textwidth} 
		\begin{algorithm}[H]
			\caption{Greedy/Random $\mathrm{SR1}$ update}
			\begin{algorithmic}[1]
				\STATE Initialization: Choose $\mG_0 \succeq \mA$.
				\FOR{$ k=0, \dots, d-1 $}
				\STATE Choose $\vu_k$ from \\
				1) \textit{greedy method}: $\vu_k = \bar{\vu}_{\mA}(\mG_k)$, or \\
				2) \textit{random method}: $\vu_k \sim \mathrm{Unif}(\mathcal{S}^{d-1})$.
				\STATE Compute $\mG_{k+1} = \mathrm{SR1}(\mG_k, \mA, \vu_k)$. \\
				\quad 
				\ENDFOR
			\end{algorithmic}
			\label{algo:sr1-update}
		\end{algorithm}
	\end{minipage}
	\begin{minipage}[t]{.5\textwidth}
		\begin{algorithm}[H]
			\caption{Greedy/Random $\BFGS$ update}
			\begin{algorithmic}[1]
				\vspace{-3pt}
				\STATE Initialization: Set $\mG_0 \succeq \mA$, $\mL_0^\top\mL_0 {=} \mG_0^{-1}$.
				\FOR{$ k \geq 0 $}
				\STATE Compute $\vu_k = \bm{L}_k^\top\tilde{\vu}_k$ with $\tilde{\vu}_k$ from \\
				1) \textit{greedy method}: $\tilde{\vu}_k = \tilde{\vu}_{\mA}(\bm{L}_k)$, or \\
				2) \textit{random method}: $\tilde{\vu}_k \sim \mathrm{Unif}(\mathcal{S}^{d-1})$.
				\STATE Compute $\mG_{k+1} = \mathrm{BFGS}(\mG_k, \mA, \vu_k)$.
				\STATE Compute $\bm{L}_{k+1}$ based on Eq.~\eqref{eq:lk-up}.
				\ENDFOR
			\end{algorithmic}
			\label{algo:bfgs-update}
		\end{algorithm}
	\end{minipage}
\end{figure}

\begin{theorem}\label{thm:sr1-update}
	Suppose in Algorithm \ref{algo:sr1-update}, a random initialization $\mG_0$ always satisfies $\mG_0 \succeq \mA$. Then we obtain that for the greedy method defined in Eq.~\eqref{eq:greedy-sr1} or the random method defined in Eq.~\eqref{eq:random-sr1},
	\begin{equation}\label{eq:tau-k}
	    \forall k \geq 0, \mG_k \succeq \mA \text{ and } 0 \leq \E \tau_k \leq \left(1-\frac{k}{d}\right)_+ \E\tau_0,
	\end{equation}
	where $(x)_+=\max\{0,x\}$. Hence, $\E \tau_{\mA}(\mG_k)$ converges to zero superlinearly.
	Particularly, $\forall k\geq d, \mG_k = \mA$ for greedy SR1 update, and $\mG_k = \mA$ almost surely for random SR1 update.
\end{theorem}

Previous work \citep{rodomanov2021greedy} adopted measure $\sigma_{\mA}(\cdot)$, which only gives the same rate as general updates of the Broyden family  \citep[e.g., Theorem \ref{thm:rand-update}, and][Theorem 2.5]{rodomanov2021greedy}. 
Thus we employ a more precise measure $\tau_{\mA}(\cdot)$.

\subsection{Linear Convergence for BFGS Update}
We now consider the classical BFGS update in the same scheme. Reusing the measure $\sigma_{\mA}(\cdot)$, we obtain that
\begin{equation}\label{eq:sigma-up}
\sigma_{\mA}(\mG_+) \stackrel{\eqref{eq:sigmaA}\eqref{eq:bfgs}}{=} \sigma_{\mA}(\mG) - \frac{\vu^\top \mG \mA^{-1}\mG\vu}{\vu^\top \mG \vu} + 1, \; \; \mG_+ = \BFGS(\mG, \mA, \vu), \text{ if } \vu \neq \bm{0}.
\end{equation}
If we directly apply the greedy or random method from the previous content, we could only obtain the same linear convergence rate as \citet[Theorem 2.5]{rodomanov2021greedy}.
However, if we take advantage of the current $\mG$, and choose a scaled direction such that $\vu = \mL^\top\tilde{\vu}$ where $\mL$ is a square matrix satisfying $\mL^\top\mL=\mG^{-1}$, then we could simplify the formulation and obtain a faster condition-number-free linear convergence rate.
Specifically, after replacing $\vu$ with $\mL^\top\tilde{\vu}$ and $\mG$ with $\mL^{-1}\mL^{-\top}$, we get
\begin{equation}\label{eq:sigma-up2}
    \sigma_{\mA}(\mG_+) \stackrel{\eqref{eq:sigma-up}}{=} \sigma_{\mA}(\mG) - \frac{\tilde{\vu}^\top \bm{L}^{-\top} \mA^{-1}\bm{L}^{-1}\tilde{\vu}}{\tilde{\vu}^\top \tilde{\vu}} + 1, \; \; \mG_+ = \BFGS(\mG, \mA, \vu), \text{ if } \vu \neq \bm{0}. 
\end{equation}
Thus our modified greedy BFGS update is as follows:
\begin{equation}\label{eq:greedy-bfgs}
    (\text{Greedy BFGS}) \ \ \tilde{\vu}_{\mA}(\mL) = \mathop{\arg\max}_{\tilde{\vu} \in \{\bm{e}_{1}, \dots, \bm{e}_d\}} \tilde{\vu}^\top \mL^{-\top}\mA^{-1}\mL^{-1}\tilde{\vu}.
\end{equation}
Similar arguments apply to the random method used in Eq.~\eqref{eq:randomu}:
\begin{equation}\label{eq:random-bfgs}
    (\text{Random BFGS}) \ \ \tilde\vu \sim\gD, \ s.t. \; \E_{\tilde{\vu} \sim \gD} \, \frac{\tilde{\vu}\tilde{\vu}^\top}{\tilde{\vu}^\top\tilde{\vu}} = \frac{1}{d} \mI_d.
\end{equation}
Now we give the linear convergence rate of the BFGS update under our modified method. We leave the proof of Theorem \ref{thm:bfgs-update} in Appendix \ref{app:miss-prove2}.

\begin{theorem}\label{thm:bfgs-update}
    Suppose in Algorithm \ref{algo:bfgs-update}, a random initialization $\mG_0$ always satisfies $\mG_0 \succeq \mA$. Then we obtain that for the \textit{greedy method} defined in Eq.~\eqref{eq:greedy-bfgs} or the \textit{random method} defined in Eq.~\eqref{eq:random-bfgs},
    \begin{equation} \label{eq:sigma-k}
        \forall k\geq 0, \mG_k\succeq \mA \text{ and } 0 \leq \E \sigma_k \leq \left(1-\frac{1}{d}\right)^k \E \sigma_0.
    \end{equation}
    Therefore, $ \E \sigma_A(\mG_k)$ converges to zero linearly.
\end{theorem}

\begin{remark}\label{remark:bfgs-eff}
	Note that the complexity in Eq.~\eqref{eq:greedy-bfgs} is $O(d^3)$ because we have  multiplication-addition operations with (unknown) $\mA^{-1}$. Hence we \textbf{do not} apply this greedy strategy in practice, but view it as a theoretical result similar to the random strategy.  
	Moreover, the random method is still practical, and we will show the efficiency of our scaled direction compared to the original direction in our numerical experiments.
\end{remark}

Finally, we can employ an efficient way (with complexity $O(d^2)$) for updating $\mL_k$ at each step $k\geq 0$, and we leave the proof in Appendix \ref{app:propl}.

\begin{proposition}\label{prop:l}
    Suppose we already have $\mL_k^\top \mL_k = \mG_k^{-1} \succ \bm{0}$, where $\mL_k$ is a square matrix, and $\vu_k=\mL_k^\top\tilde\vu_k$. 
    Then we can construct the square matrix $\mL_{k+1}$ which satisfies $\mL_{k+1}^\top \mL_{k+1}= \mG_{k+1}^{-1} := [\BFGS(\mG_k, \mA,\vu_k)]^{-1}$ as below:
    \begin{equation}\label{eq:lk-up}
        \mL_{k+1} = \mL_k - \frac{\left[\mL_k (\mA\vu_k) - \vv_k\right] \vu_k^\top}{\vu_k^\top (\mA \vu_k)} \text{ with } \vv_k = \sqrt{\vu_k^\top \cdot (\mA\vu_k)} \cdot \frac{\tilde\vu_k}{\norm{\tilde\vu_k}}.
    \end{equation}
\end{proposition}

\subsection{Unconstrained Quadratic Minimization}\label{sec:un-quad}

\begin{algorithm}[t]
	\caption{Greedy/Random SR1/BFGS methods for quadratic minimization}
	\begin{algorithmic}[1]
		\STATE Initialization: Choose $\vx_0\in\sR^d$ and $\mG_0 \succeq \mA, \mL_0^\top\mL_0 = \mG_0^{-1}$.
		\FOR{$ k \geq 0 $}
		\STATE Update $\vx_{k+1} = \vx_{k} - \mG_k^{-1} \nabla f(\vx_k)$. Choose one of the following update rules:
		\STATE (\romannumeral1) SR1: Choose $\vu_k$ following Algorithm \ref{algo:sr1-update}. Compute $\mG_{k+1} = \mathrm{SR1}(\mG_k, \mA, \vu_k)$.
		\STATE (\romannumeral2) BFGS: Choose $\vu_k$ following Algorithm \ref{algo:bfgs-update}. Compute $\mG_{k+1} = \BFGS(\mG_k, \mA, \vu_k)$.
		\ENDFOR
	\end{algorithmic}
	\label{algo:quad-quasi}
\end{algorithm}

Based on the efficiency of the greedy/random SR1 and BFGS updates in matrix approximation, we next turn to minimize the strongly convex quadratic function in Eq.~\eqref{eq:object-f}.
We show the detail in Algorithm \ref{algo:quad-quasi}, which is only for theoretical analysis. 
In practice, we use the inverse update rules \citep[Eqs. (6.17) and (6.25)]{nocedal2006numerical} to update $\mG_k^{-1}$:
\begin{align}
\mG_{+}^{-1} &= \mG^{-1} + \frac{(\mI_d-\mG^{-1}\mA)\vu\vu^\top(\mI_d-\mA\mG^{-1})}{\vu^\top(\mA-\mA\mG^{-1}\mA)\vu}, && \mG_+ = \mathrm{SR1}(\mG, \mA, \vu); \label{eq:invsr1} \\
\mG_{+}^{-1} &= \left(\mI_d-\dfrac{\vu\vu^\top \mA}{\vu^\top \mA\vu}\right) \mG^{-1} \left(\mI_d-\dfrac{\mA \vu\vu^\top}{\vu^\top \mA\vu}\right) + \dfrac{\vu\vu^\top}{\vu^\top \mA \vu}, && \mG_+ = \mathrm{BFGS}(\mG, \mA, \vu).  \label{eq:invbfgs}
\end{align}
Based on Lemma \ref{lemma:lambda}, we can guarantee a faster superlinear convergence of $\{\lambda_k\}$ (defined in Eqs.~\eqref{eq:lambda} and \eqref{eq:lam-sig}) using the greedy\slash random SR1 or BFGS update. 
The proof of Theorem \ref{thm:quad} can be found in Appendix \ref{app:miss-prove-random2}.

\begin{theorem}\label{thm:quad}
	For Algorithm \ref{algo:quad-quasi} with a randomly initialized $\mG_0$, such that $\mG_0 \succeq \mA$ always holds, we have that $\forall k \geq 0, \lambda_{k+1} \leq \rho_k\lambda_{k}$, where $\rho_k$ is a certain nonnegative random variable such that for SR1 update,
	\[ \E \rho_k \leq \left(1-\frac{k}{d}\right)_+\frac{\E\tau_0}{\mu}, \; \forall k \geq 0,  \]
	and for BFGS update,
	\[ \E \rho_k \leq \left(1-\frac{1}{d}\right)^k \E\sigma_0, \; \forall k \geq 0. \] 
\end{theorem}

We can also use a similar technique in Corollary \ref{cor:quad-ranom-p} to give the probabilistic version of Theorem \ref{thm:quad}, but the differences from greedy/random quasi-Newton methods are clear.
In particular, for the SR1 update, our bound recovers the classical result of \citet[Theorem 6.1]{nocedal2006numerical}, showing that the update stops after finite steps because $\mG_d=\mA$ and $\lambda_{d+1}=0$ almost surely. 
Moreover, we give an explicit rate during the entire optimization process.
And the main decreasing term $\left(1-\frac{k}{d}\right)_+$ for the SR1 update as well as $(1-\frac{1}{d})^k$ for the BFGS update in the $k$-th iteration are independent of the condition number $\varkappa$ of $\mA$, which improves the bound $(1-\frac{1}{d\varkappa})^k$ by \citet[Theorem 3.4]{rodomanov2021greedy}.

\subsection{Minimization of General Functions}\label{sec:un-general}

Finally, we consider the optimization of an $M$-\textit{strongly self-concordant}, $\mu$-strongly convex and $L$-smooth objective as Subsection \ref{subsec:random-gel} does.
We show the entire iteration coupled with our modified update rules in Algorithm \ref{algo:general-quasi}.
We underline that Algorithm \ref{algo:general-quasi} is only for theoretical analysis, and we will use the inverse update rules (Eqs.~\eqref{eq:invsr1} and \eqref{eq:invbfgs}) and Hessian-vector products in practice.
Additionally, we assume that $d \geq 2$, and the constants $ M $ and $ L $ are available for simplicity.
Using the same proof technique, we could obtain faster convergence rates of $\{\lambda_k\}$ (defined in Eqs.~\eqref{eq:lambda} and \eqref{eq:lam-sig}) for greedy/random SR1 or BFGS method. 
The proof of Lemma \ref{lemma:gel-bfgs-sr1} can be found in Appendix \ref{app:miss-prove-random3}.

\begin{algorithm}[t]
	\caption{Greedy/Random SR1/BFGS methods for strongly self-concordant objective}
	\begin{algorithmic}[1]
		\STATE Initialization: Choose $\vx_0\in\sR^d$ and $\mG_0 \succeq \nabla^2 f(\vx_0), \mL_0^\top\mL_0 = \mG_0^{-1}$.
		\FOR{$ k \geq 0 $}
		\STATE Update $\vx_{k+1} = \vx_{k} - \mG_k^{-1} \nabla f(\vx_k)$.
		\STATE Compute $r_k = \|\vx_{k+1}-\vx_k\|_{\vx_k}$, $\tilde{\mG}_k = \left(1+Mr_k\right)\mG_k$, $\tilde{\mL}_k = \mL_k / \sqrt{1+Mr_k}$.
		\STATE (\romannumeral1) Greedy/Random SR1: Choose $\vu_k = \bar{\vu}_{\nabla^2f(\vx_{k+1})}(\tilde{\mG}_k)$, or $\vu_k \sim \mathrm{Unif}(\mathcal{S}^{d-1})$. \\
		Compute $\mG_{k+1} = \mathrm{SR1}(\tilde{\mG}_k, \nabla^2 f(\vx_{k+1}), \vu_k)$.
		\STATE (\romannumeral2) Greedy/Random BFGS: Choose $\vu_k = \tilde{\bm{L}}_k^\top\tilde{\vu}_k$ with $\tilde{\vu}_k = \tilde{\vu}_{\nabla^2f(\vx_{k+1})}(\tilde{\bm{L}}_k)$, or $\tilde{\vu}_k \sim \mathrm{Unif}(\mathcal{S}^{d-1})$. \\
		Compute $\mG_{k+1} = \BFGS(\tilde{\mG}_k, \nabla^2f(\vx_{k+1}), \vu_k)$, and $\bm{L}_{k+1}$ based on Eq.~\eqref{eq:lk-up}.
		\ENDFOR
	\end{algorithmic}
	\label{algo:general-quasi}
\end{algorithm}

\begin{lemma}\label{lemma:gel-bfgs-sr1}
	Suppose in Algorithm \ref{algo:general-quasi}, a randomly initialized $\mG_0$ always satisfies $\nabla^2 f(\vx_0) \preceq \mG_0 \preceq \eta \nabla^2 f(\vx_0)$ for some $\eta \geq 1$, and the initial point $\vx_0$ is sufficiently close to the solution:
	\begin{equation*}
	    M\lambda_0 \leq \frac{\ln2}{4\eta(2c d +1)},
	\end{equation*}
	where $c=1$ for BFGS update and $c=\varkappa$ for SR1 update.
	Then for all $k \geq 0$, we have $\nabla^2 f(\vx_k) \preceq \mG_k \preceq (1+\delta_k)\nabla^2 f(\vx_k)$, where $\delta_k$ is a certain nonnegative random variable such that
	\begin{equation*}
	    \E \delta_k \leq 2c d\eta \left(1-\frac{1}{d}\right)^k,
	\end{equation*}
	and $\lambda_{k+1} \leq \rho_k\lambda_k$, where $\rho_k$ is a certain nonnegative random variable such that
	\begin{equation*}
	    \E \rho_k \leq 2c d\eta\left(1-\frac{1}{d}\right)^k.
	\end{equation*}
\end{lemma}

Similarly, we can give deterministic results of greedy methods and probabilistic results of randomized methods below. We leave the proof of Theorem \ref{thm:gre-ran-p} in Appendix \ref{app:miss-prove-random4}.

\begin{theorem}\label{thm:gre-ran-p}
    Under the same assumptions and notation as in Lemma \ref{lemma:gel-bfgs-sr1}, we have the explicit rates of $\{\lambda_k\}$ and $\{\delta_k\}$ shown in below:
	\begin{itemize}
	    \item for greedy BFGS/SR1 method, we have
        	\begin{equation*}
        		\delta_k \leq 2c d \eta\left(1-\frac{1}{d}\right)^{k} \text{  and  }
        		\lambda_k \leq \left(2c d\eta\right)^k \left(1-\frac{1}{d}\right)^{k(k-1)/2} \lambda_0, \forall k \geq 0;
        	\end{equation*}
	    \item for random BFGS/SR1 method, with probability at least $1-\delta$ over the random directions $\{\vu_k\}$, we could obtain
        	\begin{equation*}
        		\delta_k \leq \frac{4c d^3\eta}{\delta}\left(1-\frac{1}{d+1}\right)^{k} \text{  and  }
        		\lambda_{k} \leq \left(\frac{4c d^3\eta}{\delta}\right)^{k}\left(1-\frac{1}{d+1}\right)^{k(k-1)/2} \lambda_0, \forall k \geq 0.
        	\end{equation*}
	\end{itemize}
\end{theorem}

Finally, we combine with the linear convergence shown in Theorem 4.7 of \citet{rodomanov2021greedy} to give fair comparison of our superlinear convergence rates.
Under the SR1 update, unlike the measure $\sigma_\mA(\cdot)$ used by \cite{rodomanov2021greedy}, we employ a different measure $\tau_\mA(\cdot)$, requiring a stronger initial point condition to derive the convergence of $\{\lambda_k\}$ and $\{\delta_k\}$. Fortunately, we could obtain the same convergence bound with a slightly worse $k_0$ below.
The proof of Corollary \ref{cor:gel-sr1-bfgs} is given in Appendix \ref{app:miss-prove-random5}.

\begin{corollary}\label{cor:gel-sr1-bfgs}
	Suppose in Algorithm \ref{algo:general-quasi}, $\mG_0=L \mI_d$ and $\vx_0$ satisfies $M\lambda_0 \leq \frac{\ln\frac{3}{2}}{4\varkappa}$, that is, the initial condition here is weaker than that in Lemma \ref{lemma:gel-bfgs-sr1} when $\eta=\varkappa$. 
	Then we could obtain:
	\emph{1)} for the greedy BFGS/SR1 method,
	\[ \lambda_{k_0+k} \leq \left(1-\frac{1}{d}\right)^{k(k-1)/2} \cdot \left(\frac{1}{2}\right)^k \cdot \left(1-\frac{1}{2\varkappa}\right)^{k_0} \cdot \lambda_0, \text{ for all } k \geq 0, \]
	where $k_0=O\left((d+\varkappa)\ln(d\varkappa)\right)$; 
	\emph{2)} for the random BFGS/SR1 method, with probability at least $1-\delta$ over the random directions $\{\vu_k\}$, we have
	\[ \lambda_{k_0+k} \leq \left(1-\frac{1}{d+1}\right)^{k(k-1)/2} \cdot \left(\frac{1}{2}\right)^k \cdot \left(1-\frac{1}{2\varkappa}\right)^{k_0} \cdot \lambda_0, \text{ for all } k \geq 0, \]
	where $k_0=O\left((d+\varkappa)\ln(d\varkappa/\delta)\right)$ .
\end{corollary}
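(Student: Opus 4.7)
The plan is to interpolate between the pre-existing linear convergence guarantee of \cite{rodomanov2021greedy} and the superlinear bounds of Theorem~\ref{thm:gre-ran-p}. The weakened hypothesis $M\lambda_0 \leq \frac{\ln(3/2)}{4\varkappa}$ is precisely what keeps \cite{rodomanov2021greedy}'s Theorem~4.7 applicable, so during an initial \emph{linear phase} we have $\lambda_k \leq (1-\frac{1}{2\varkappa})^k \lambda_0$ together with a uniform upper bound $\mG_k \preceq \eta\,\nabla^2 f(\vx_k)$ in which $\eta$ deteriorates only polynomially in $d\varkappa$. For the random updates this bound on $\eta$ must be promoted from expectation to high probability via Markov's inequality (as in Theorem~\ref{thm:random-p}), which is what introduces an additional $\log(1/\delta)$ factor into $k_0$.

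First I would run this linear phase for $k_1 = O(\varkappa\ln(d\varkappa))$ steps, chosen large enough so that $M\lambda_{k_1}$ falls below the stricter threshold demanded by Lemma~\ref{lemma:gel-bfgs} (namely $\frac{\ln 2}{4\eta(2d+1)}$ for BFGS) or Lemma~\ref{lemma:gel-sr1} (namely $\frac{\ln 2}{4\eta(2d\varkappa+1)}$ for SR1). At iteration $k_1$ I would restart the analysis, taking $\vx_{k_1}$ as the new initial point and $\mG_{k_1}$ as the new approximation, and invoke Theorem~\ref{thm:gre-ran-p}: for greedy BFGS/SR1 this yields the deterministic ratio bound $\lambda_{k_1+i+1}/\lambda_{k_1+i} \leq 2d\eta(1-\frac{1}{d})^i$, and for the random variants the analogous high-probability bound with $(1-\frac{1}{d+1})^i$.

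The rest is purely algebraic: wait an additional $k_2 = O(d\ln(d\varkappa))$ iterations until $2d\eta(1-\frac{1}{d})^{k_2} \leq \frac{1}{2}$. From that index onward every subsequent ratio is at most $\tfrac{1}{2}(1-\frac{1}{d})^{i}$, and telescoping these ratios from $i=0$ produces exactly the factor $(1/2)^k(1-\frac{1}{d})^{k(k-1)/2}$ (with $(1-\frac{1}{d+1})$ in the random case). Setting $k_0 := k_1 + k_2 = O((d+\varkappa)\ln(d\varkappa))$ in the deterministic case and $O((d+\varkappa)\ln(d\varkappa/\delta))$ in the random case, and multiplying by the linear factor $(1-\frac{1}{2\varkappa})^{k_0}$ inherited from the first phase, gives the compound bound stated in the corollary.

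The main technical obstacle is tracking $\eta$ through the first $k_1$ iterations, since each correction step $\tilde\mG_k = (1+Mr_k)\mG_k$ inflates the Hessian-approximation constant. A telescoping argument on $\prod_{i<k_1}(1+Mr_i)$, combined with the estimate $r_i \lesssim \lambda_i$ and the geometric decay of $\lambda_i$ in the linear phase, should keep $\eta = \mathrm{poly}(d\varkappa)$; this is essentially the bookkeeping already carried out by \cite{rodomanov2021greedy} for their Broyden-family analysis, so the adaptation to BFGS and SR1 is routine once Lemmas~\ref{lemma:gel-bfgs} and \ref{lemma:gel-sr1} are in place. The logarithmic factor $\ln(d\varkappa)$ in $k_0$ is an unavoidable consequence of this step, and the decomposition $k_0 = k_1 + k_2$ is what separates the $\varkappa$ contribution (shrinking $\lambda$ in the linear phase) from the $d$ contribution (killing the $2d\eta$ prefactor in the superlinear phase).
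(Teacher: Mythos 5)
Your two-phase decomposition ($k_0 = k_1 + k_2$: a linear warm-up until $\lambda$ is small enough to trigger the superlinear lemmas, then additional iterations to kill the $2d\eta$-type prefactor before telescoping the ratio bounds) is exactly the structure used in the paper, which routes everything through Theorem~\ref{app:aux-thm} for the linear phase and Theorem~\ref{thm:gre-ran-p} for the superlinear phase. Two small points worth noting: the bound on $\eta$ during the linear phase is not the technical obstacle you anticipate, because Theorem~\ref{app:aux-thm} already gives the \emph{deterministic} bound $\mG_k \preceq \frac{3}{2}\varkappa\,\nabla^2 f(\vx_k)$ throughout (so $\eta$ stays $O(\varkappa)$, not merely $\mathrm{poly}(d\varkappa)$); and the $\ln(1/\delta)$ dependence in $k_0$ comes solely from the Markov-inequality prefactor $O(d^3\varkappa^{\mathrm{const}}/\delta)$ in the superlinear-phase ratio bound of Theorem~\ref{thm:gre-ran-p}, not from promoting the $\eta$ bound to high probability (which is already deterministic). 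One technical subtlety you leave implicit but the paper handles is that for the random methods $\mG_{k_1}$ is itself random, so the restart at iteration $k_1$ must be justified by conditioning on the state $(\vx_{k_1},\mG_{k_1})$; since the deterministic bound from the first phase holds for every realization, the conditional high-probability bound lifts to an unconditional one.
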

Therefore, both the greedy and random methods have nonasymptotic superlinear convergence rates. 
Additionally, our superlinear rates are condition-number-free compared to the rates in Corollary \ref{cor:random-gel} and the work of \cite{rodomanov2021greedy}.

\section{Discussion and Comparison}\label{sec:compa}

For better understanding the difference from the greedy quasi-Newton methods obtained in \cite{rodomanov2021greedy}, we give detailed comparison from the scope of the local convergence region and superlinear rates with $\mG_0=L\mI_d$, i.e., $\eta=\varkappa$ in our results. 

\textbf{Local Convergence Region.}
Because  we follow the proof of \cite{rodomanov2021greedy}'work, our linear convergence region is the same as theirs, i.e., $M\lambda_0 = O(\frac{1}{\varkappa})$.
Our superlinear convergence region of greedy/random BFGS (Lemma \ref{lemma:gel-bfgs-sr1}) and random Broyden (Lemma \ref{lemma:random-gel}) is the same as the one obtained in \citet[Theorem 4.9]{rodomanov2021greedy} for greedy Broyden method: $M\lambda_0 = O\left(\frac{1}{d\varkappa}\right)$. 
While our greedy/random SR1 (Lemma \ref{lemma:gel-bfgs-sr1}) needs a slight worse local region $M\lambda_0 = O\left(\frac{1}{d\varkappa^2}\right)$, because we use a different measure.

Different local regions show different warm-up iterations from linear rate region to superlinear rate region. Recall that the linear rates of these methods are the same as below:
\[ \lambda_k \stackrel{\eqref{eq:lambda-a}}{\leq} \left(1-\frac{1}{2\varkappa}\right)^k\lambda_0 \leq \exp\left\{-\frac{k}{2\varkappa}\right\}\lambda_0, \; \forall k \geq 0. \]
Thus, with beginning from $M\lambda_0 = O(\frac{1}{\varkappa})$, the linear rate lasts for $K_1=O(\varkappa \ln d)$ iterations for greedy/random Broyden and BFGS methods to make $M\lambda_{K_1} = O(\frac{1}{d\varkappa})$, but a slight worse $K_1=O(\varkappa \ln (d\varkappa))$ iterations for greedy/random SR1 methods to make $M\lambda_{K_1} = O(\frac{1}{d\varkappa^2})$.

\begin{table*}[t] 	
	\centering
	\begin{tabular}{cccc}
		\toprule
		\makecell{Quasi-Newton Methods \\ with $\mG_0=L\mI_d$} & \makecell{Local Region \\ ($M\lambda_0$)} & \makecell{Warm-up \\ ($K_1$)} & \makecell{Starting Moment \\ ($K_2$)}\\
		\midrule
		\makecell{Greedy/Random Broyden \\ \cite[]{rodomanov2021greedy} \\ (Lemma \ref{lemma:random-gel} and Theorem \ref{thm:random-p})} & $O\left(\dfrac{1}{d\varkappa}\right)$ & $O\left(\varkappa\ln d\right)$ & \makecell{$O\left(d\varkappa\ln(d\varkappa)\right)$ \\ $O\left(d\varkappa\ln(d\varkappa/\delta)\right)$} \\
		\midrule
		\makecell{Greedy/Random BFGS \\ (Lemma \ref{lemma:gel-bfgs-sr1} and Theorem \ref{thm:gre-ran-p})} & $O\left(\dfrac{1}{d\varkappa}\right)$ & $O\left(\varkappa\ln d\right)$ & \makecell{$O\left(d\ln(d\varkappa)\right)$ \\ $O\left(d\ln (d\varkappa/\delta) \right)$} \\
		\midrule
		\makecell{Greedy/Random SR1 \\ (Lemma \ref{lemma:gel-bfgs-sr1} and Theorem \ref{thm:gre-ran-p})} & $O\left(\dfrac{1}{d\varkappa^2}\right)$ & $O\left(\varkappa\ln (d\varkappa)\right)$ & \makecell{$O\left(d\ln(d\varkappa)\right)$ \\ $O\left(d\ln (d\varkappa/\delta)\right)$} \\
		\bottomrule
	\end{tabular}
	\caption{Comparison of 1) the local superlinear convergence region, 2) the warm-up iterations from linear rate region to superlinear rate region, and 3) the starting moment of superlinear rates at the local superlinear convergence region. 
	We all adopt $\mG_0=L\mI_d$ for brevity.
	For the randomized methods, the presented rates hold with probability at least $1-\delta$.}
	\label{table:res2}
\end{table*}

\textbf{Superlinear Rates.}
First, it is obvious that our greedy/random BFGS and SR1 methods have a faster rates than greedy/random Broyden methods because we improve the superlinear term from $(1-\frac{1}{d\varkappa})^{k(k-1)/2}$ to $(1-\frac{1}{d})^{k(k-1)/2}$.

Second, let us consider the starting moment of superlinear convergence.
For random Broyden methods, from Theorem \ref{thm:random-p}, we have the superlinear convergence is valid after 
\begin{equation}\label{eq:sp-ran-bro}
    K_2^{\mathrm{R-Broyden}}:=2(d\varkappa+1)\ln\frac{4d^3\varkappa^3}{\delta}+1
\end{equation}
iterations. Indeed, from Theorem \ref{thm:random-p}, for all $k \geq K_2^{\mathrm{R-Broyden}}$,
\[ \lambda_k \leq \left(\frac{4d^3\varkappa^3}{\delta}\right)^{k}  \left(1-\frac{1}{d\varkappa+1}\right)^{k(k-1)/2} \lambda_0 \leq \left[\frac{4d^3\varkappa^3}{\delta}  \exp\left\{-\frac{k-1}{2(d\varkappa+1)}\right\}\right]^k \lambda_0 (\stackrel{\eqref{eq:sp-ran-bro}}{\leq} \lambda_0). \]
Similarly, from Theorem \ref{thm:gre-ran-p}, we could obtain that the superlinear rates of our random BFGS and SR1 methods are valid after
\begin{equation*}
    K_2^{\mathrm{R-BFGS}}:=2(d+1)\ln\frac{4d^3\varkappa}{\delta}+1 \text{ and } K_2^{\mathrm{R-SR1}}:=2(d+1)\ln\frac{4d^3\varkappa^2}{\delta}+1
\end{equation*}
iterations, and the superlinear rates of our greedy BFGS and SR1 methods are valid after
\begin{equation*}
    K_2^{\mathrm{G-BFGS}}:=2d \ln (2d\varkappa)+1 \text{ and } K_2^{\mathrm{G-SR1}}:=2d \ln (2d\varkappa^2)+1
\end{equation*} 
iterations.
Moreover, based on \citet[Theorem 4.9]{rodomanov2021greedy}, we get
\begin{equation*}
    K_2^{\mathrm{G-Broyden}}:=2d\varkappa \ln (2d\varkappa)+1.
\end{equation*} 
Thus, our proposed greedy/random BFGS and SR1 methods improve the factor $O(d\varkappa\ln(d\varkappa))$ and $O(d\varkappa\ln(d\varkappa/\delta))$ of greedy/random Broyden methods to $O(d\ln(d\varkappa))$ and $O(d\ln(d\varkappa/\delta))$.

Third, we note that the local convergence regions of these methods are different from the discussion. 
Thus, we consider the whole convergent phase when $M\lambda_0 = O(1/\varkappa)$.
Based on Corollary \ref{cor:random-gel}, Corollary \ref{cor:gel-sr1-bfgs} and \citet[Theorem 4.9]{rodomanov2021greedy}, the starting moment of superlinear rates of our proposed greedy/random BFGS and SR1 methods at this time need $O((d+\varkappa)\ln(d\varkappa))$ (or $O((d+\varkappa)\ln(d\varkappa/\delta))$), which improves 
$O(d\varkappa\ln(d\varkappa))$ (or $O(d\varkappa\ln(d\varkappa/\delta))$) of greedy/random Broyden methods.
For brevity, we summarize the comparison discussed above to Tables \ref{table:res} and \ref{table:res2}.

\section{Numerical Experiments}\label{sec:exper}

In this section, we verify our theorems through numerical results for quasi-Newton methods. 
\citet[Section 5]{rodomanov2021greedy} have already compared their proposed greedy quasi-Newton methods with the classical quasi-Newton methods.
They showed that GrDFP, GrBFGS, GrSR1 (greedy DFP, BFGS, SR1 methods) with directions based on $\hat{\bm{u}}_\mA(\mG)$ (defined in Eq.~\eqref{eq:gre-u-ori}), have quite competitive convergence with the standard versions. 
They also presented the results for the randomized versions RaDFP, RaBFGS, RaSR1, which directly choose directions uniformly from the standard Euclidean sphere.
They found that the randomized methods are slightly slower than the greedy versions. 
However, the difference is not really significant.

The difference between our algorithms and their methods mainly comes from the greedy strategy for SR1 and the random strategy for BFGS\footnote{There is no difference in the random SR1 method compared to \citet{rodomanov2021greedy}, which directly selects random directions. And our greedy BFGS method is not efficient ($O(d^3)$ in each iteration) as we mentioned in Remark \ref{remark:bfgs-eff}. Thus we leave it out.}. Hence, we mainly focus on exhibiting our validity in these schemes. 
We refer to GrSR1v2 as our revised method and GrSR1v1 as the previous method (by adopting $\hat{\vu}_{\mA}(\mG)$). Similarly, we denote RaBFGSv2 that uses scaled directions ($\mL^\top_k\tilde\vu$) and RaBFGSv1 that directly uses random directions $\tilde\vu$ correspondingly. We choose the random directions from  $\mathrm{Unif}(\mathcal{S}^{d-1})$ in all randomized methods for brevity.

\begin{figure}[t]
	\centering
	\begin{subfigure}[b]{0.33\textwidth}
		\includegraphics[width=\linewidth]{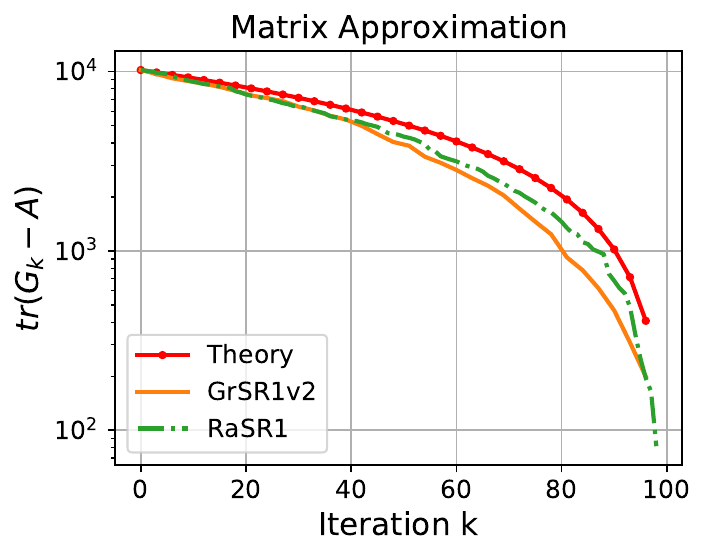}
		\caption{$d=100, \varkappa=2000$.} \label{fig:mat-quad-app1a}
	\end{subfigure}
	\begin{subfigure}[b]{0.33\textwidth}	
		\includegraphics[width=\linewidth]{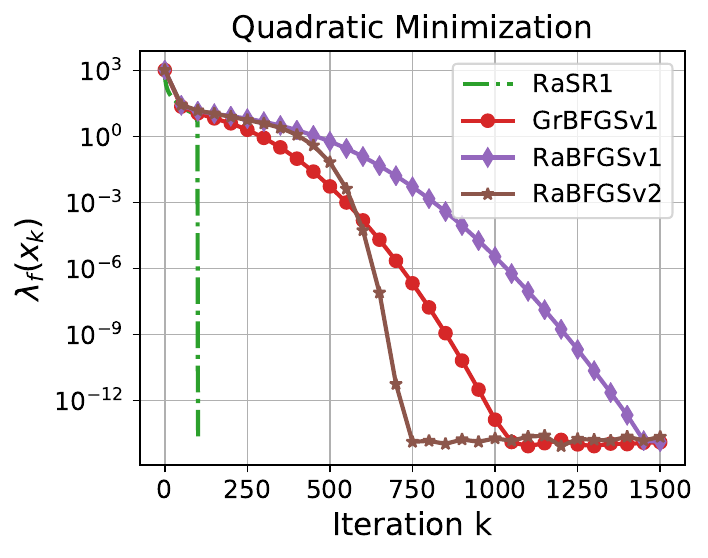}
		\caption{$d=100, \varkappa=2000$.} \label{fig:mat-quad-loss}	
	\end{subfigure}
	\\
	\begin{subfigure}[b]{0.32\textwidth}
		\includegraphics[width=\linewidth]{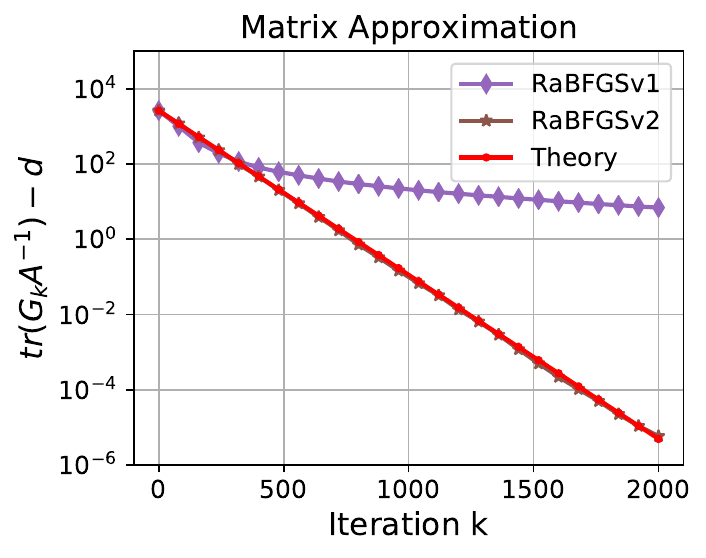}
		\caption{$d=100, \varkappa=200$.}
		\label{fig:mat-quad-app1c}	
	\end{subfigure}
	\begin{subfigure}[b]{0.32\textwidth}
		\includegraphics[width=\linewidth]{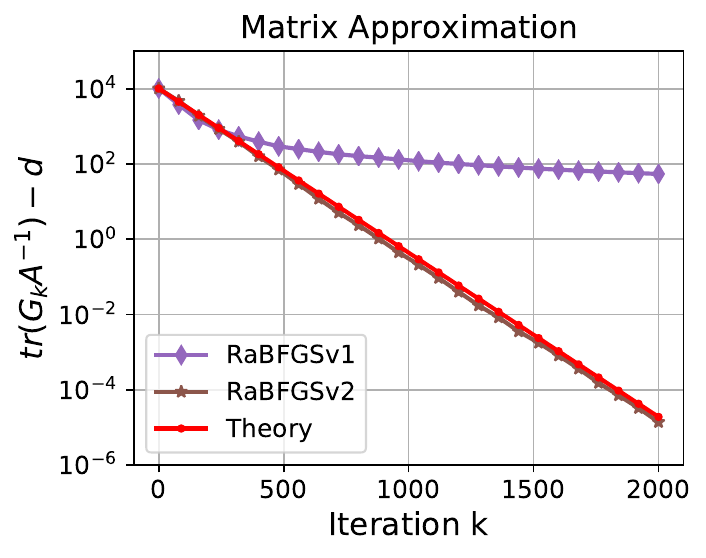}
		\caption{$d=100, \varkappa=2000$.}
		\label{fig:mat-quad-app1d}	
	\end{subfigure}
	\begin{subfigure}[b]{0.32\textwidth}
		\includegraphics[width=\linewidth]{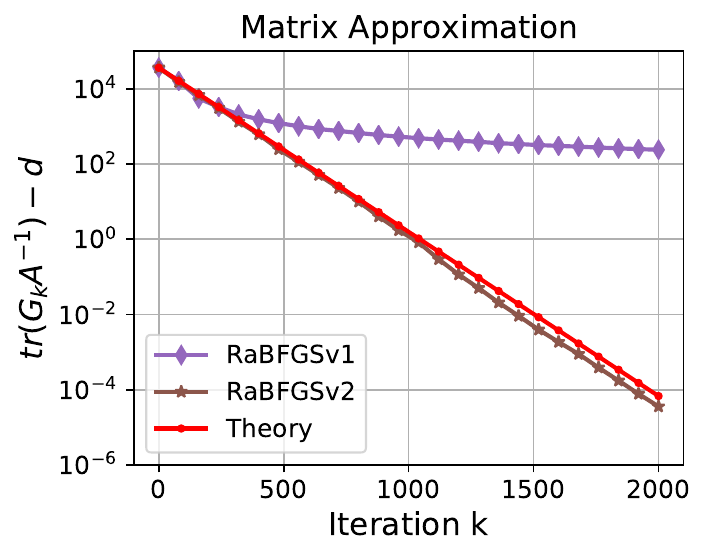}
		\caption{$d=100, \varkappa=20000$.}
		\label{fig:mat-quad-app1e}	
	\end{subfigure}
	\caption{(a, c, d, e) Comparison of different direction choosing methods under the SR1 or BFGS update for approximating a matrix $\mA$ that $\mu \mI_d \preceq \mA \preceq L\mI_d$ from $\mG_0=L\mI_d$. (a) The variation of $\tau_\mA(\mG_k)$ during Random SR1 (RaSR1) and our Greedy SR1 (GrSR1v2) update with nearly matched upper bound. (c,d,e) The variation of $\sigma_\mA(\mG_k)$ during our Random BFGS (RaBFGSv2) update and the original random version (RaBFGSv1) under various condition numbers. (b) Comparison of SR1 and BFGS methods for quadratic objective. Here we only depict RaSR1 method, while the other SR1-type methods share similar behavior.}
	\label{fig:mat-quad-app}
\end{figure}

\paragraph{Matrix approximation.} When using Algorithms \ref{algo:sr1-update} and \ref{algo:bfgs-update} for approximating a matrix $\mA \succ 0$, we show the measure as proved by Theorems \ref{thm:sr1-update} and \ref{thm:bfgs-update} in Figures \ref{fig:mat-quad-app1a}, \ref{fig:mat-quad-app1c},\ref{fig:mat-quad-app1d} and \ref{fig:mat-quad-app1e}. 
As Figure \ref{fig:mat-quad-app1a} depicts, our greedy and random SR1 updates (GrSR1v2 and RaSR1) share superlinear convergence rates under measure $\tau_\mA(\cdot)$, while our theoretical bound matches them well.
Moreover, Figures \ref{fig:mat-quad-app1c}, \ref{fig:mat-quad-app1d} and \ref{fig:mat-quad-app1e} describe the behavior of the random BFGS update under different condition numbers.
Our theory matches the linear convergence of measure $\sigma_\mA(\cdot)$ in our modified random BFGS update (RaBFGSv2) across different $\varkappa$s. 
While directly choosing a direction without scaling (RaBFGSv1) fails to give such bounds. Particularly, a large condition number could cause slow convergence of RaBFGSv1.
Hence, our methods provide effective ways of approaching a positive definite Hessian matrix. 

\paragraph{Quadratic minimization.} We also consider unconstrained quadratic minimization in Eq.~\eqref{eq:object-f} with the same positive definite matrix $\mA$ and a randomly selected vector $\vb\in\sR^d$. Running Algorithm \ref{algo:quad-quasi} with SR1 and BFGS updates, we obtain the superlinear convergence of $\lambda_f(\cdot)$ shown in Figure \ref{fig:mat-quad-loss}. Not surprisingly, our RaBFGSv2 runs faster than RaBFGSv1, while we also have the theoretical guarantee. At the same time, SR1-type methods converge to zero after $d+1$ steps because of $\mG_d=\mA$ almost surely. Here, we only depict the RaSR1 update, while the other SR1-type methods share similar behavior. Although our theoretical bound can not directly match the experiments due to the related initial terms $\tau_\mA(\mG_0)$ and $\sigma_\mA(\mG_0)$, the decay terms: $\left(1-k/d\right)$ vs. $\left(1-1/d\right)^k$ already show the superiority of the SR1 method over the BFGS method in the  quadratic minimization problem.  

\paragraph{Regularized Log-Sum-Exp.} Following the work of \citet{rodomanov2021greedy}, we present computational results for greedy and random quasi-Newton methods, applied to the following test function with $\mC = \left[\vc_1, \ldots, \vc_m\right]\in\sR^{d\times m}$, $b_1, \dots, b_m \in\sR$, and $\gamma>0$:
\begin{equation*}
f(\vx) := \ln \left(\sum_{j=1}^m e^{\vc_j^\top \vx-b_j}\right)+\frac{1}{2}\sum_{j=1}^m \left(\vc_j^\top\vx\right)^2+\frac{\gamma}{2}\left\|\vx\right\|^2, \vx\in\sR^d.
\end{equation*}
We need access to the gradient of function $f(\vx)$:
\begin{equation*}
\nabla f(\vx) = g(\vx)+\sum_{j=1}^m \left(\vc_j^\top\vx\right) \vc_j +\gamma\vx, \text{ with } g(\vx):=\sum_{j=1}^m \pi_j(\vx)\vc_j,
\end{equation*}
where
\begin{equation*}
\pi_j(\vx) := \frac{e^{\vc_j^\top \vx-b_j}}{\sum_{i=1}^me^{\vc_i^\top \vx-b_i}} \in [0, 1], j=1, \dots, m.
\end{equation*}
Moreover, given a point $\vx \in \sR^d$, we need to be able to perform the following two actions:
\begin{equation*}
\ve_i^\top[\nabla^2 f(\vx)]\ve_i = \sum_{j=1}^m \left(\pi_j(\vx)+1\right)(\vc_j^\top\ve_i)^2-(g(\vx)^\top\ve_i)^2 + \gamma, \forall 1 \leq i \leq d,
\end{equation*}
and for a given direction $\vh\in\sR^d$,
\begin{equation*}
\nabla^2 f(\vx) \cdot \vh = \sum_{j=1}^m \left(\pi_j(\vx)+1\right)\left(\vc_j^\top\vh\right)\vc_j-\left(g(\vx)^\top\vh\right)g(\vx) + \gamma \vh.
\end{equation*}
Thus both the above operations have a cost of $O(m d)$. Thus, the cost of one iteration
for all the methods is comparable.
Furthermore, note that
\[ \nabla^2 f(\vx) = \sum_{j=1}^m \left(\pi_j(\vx)+1\right)\vc_j\vc_j^\top-g(\vx)g(\vx)^\top + \gamma\mI_d. \]
We get the Lipschitz constant of $\nabla f(\vx)$ can be taken as
$L = 2\lambda_{\max}(\mC \mC^\top)+\gamma$, and $\varkappa = L/\gamma$.
As mentioned in the work of \citet[Section 5.1]{rodomanov2021greedy}, the strong self-concordancy parameter is $M=2$ with respect to the operator
$\sum_{j=1}^m \vc_j \vc_j^\top$.

We also adopt the same synthetic data as used by \citet[Section 5.1]{rodomanov2021greedy}.
First, we generate a collection of random vectors $\hat{\vc}_1, \ldots, \hat{\vc}_m$ with entries, uniformly distributed in the interval $ [-1, 1] $. Then we generate $ b_1, \dots, b_m$ from the same distribution. Using this data, we define
\[ \forall 1\leq j \leq m, \; \vc_j := \hat{\vc}_j -\nabla \hat{f}(\bm{0}), \text{ where } \hat{f}(\vx) := \ln \bigg(\sum_{j=1}^m e^{\hat{\vc}_j^\top \vx-b_j}\bigg). \]
Note that by construction,
\[ \nabla f(\bm{0}) = \frac{1}{\sum_{i=1}^me^{-b_i}}\sum_{j=1}^m e^{-b_j}\left(\hat{\vc}_j-\nabla \hat{f}(\bm{0})\right) =\bm{0}. \]
So the unique minimizer of our test function is $ \vx_* = \bm{0} $. The starting point $\vx_0$ for all methods is the same and generated randomly from the uniform distribution
on the standard Euclidean sphere of radius $1/d$ centered at the minimizer, i.e., $\vx_0\sim\text{Unif}\left(\frac{1}{d}\gS^{d-1}\right)$.
We compare $\norm{\nabla f(\vx_k)}$ obtained by different methods.

As Figure \ref{fig:logsumexp-a6a-reg} depicts, the BFGS-type methods are slower than the SR1-type methods, and the greedy algorithms converge more rapidly than the random algorithms. 
The only difference is that our RaBFGSv2 may have slower convergence behavior than RaBFGSv1 under a small $\varkappa$ in Figure \ref{fig:logsumexp-a6a-reg-1}. 
\begin{figure}[t]
	\centering
	\hspace{-7pt}
	\begin{subfigure}[b]{0.33\textwidth}
		\includegraphics[width=\linewidth]{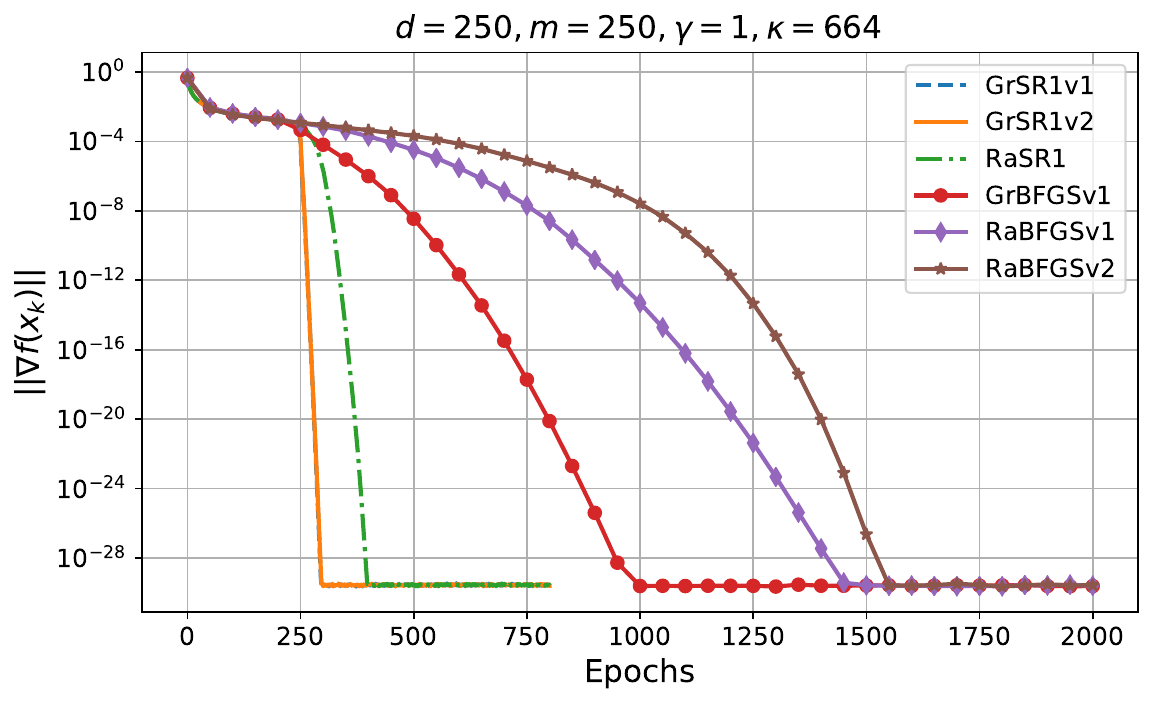}
		\caption{$\gamma=1, \varkappa=664$.}\label{fig:logsumexp-a6a-reg-1}
	\end{subfigure}
	\hspace{-7pt}
	\begin{subfigure}[b]{0.33\textwidth}
		\includegraphics[width=\linewidth]{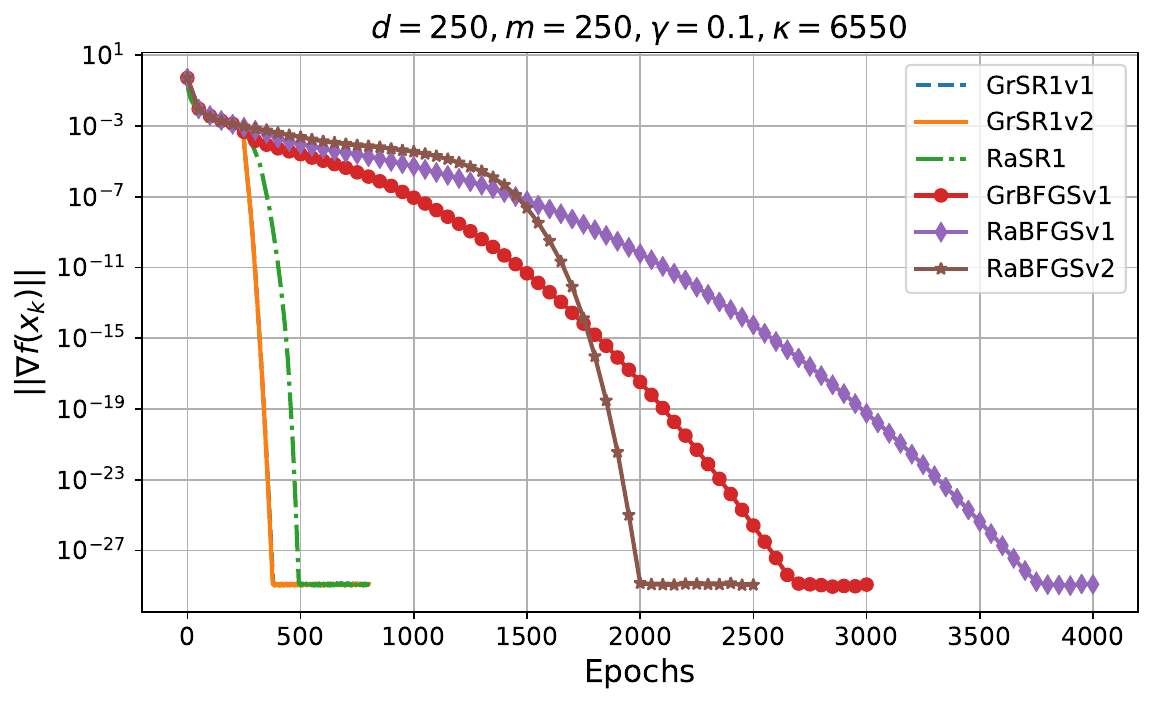}
		\caption{$\gamma=0.1, \varkappa=6550$.}\label{fig:logsumexp-a6a-reg-0.1}
	\end{subfigure}
	\hspace{-7pt}	
	\begin{subfigure}[b]{0.33\textwidth}
		\includegraphics[width=\linewidth]{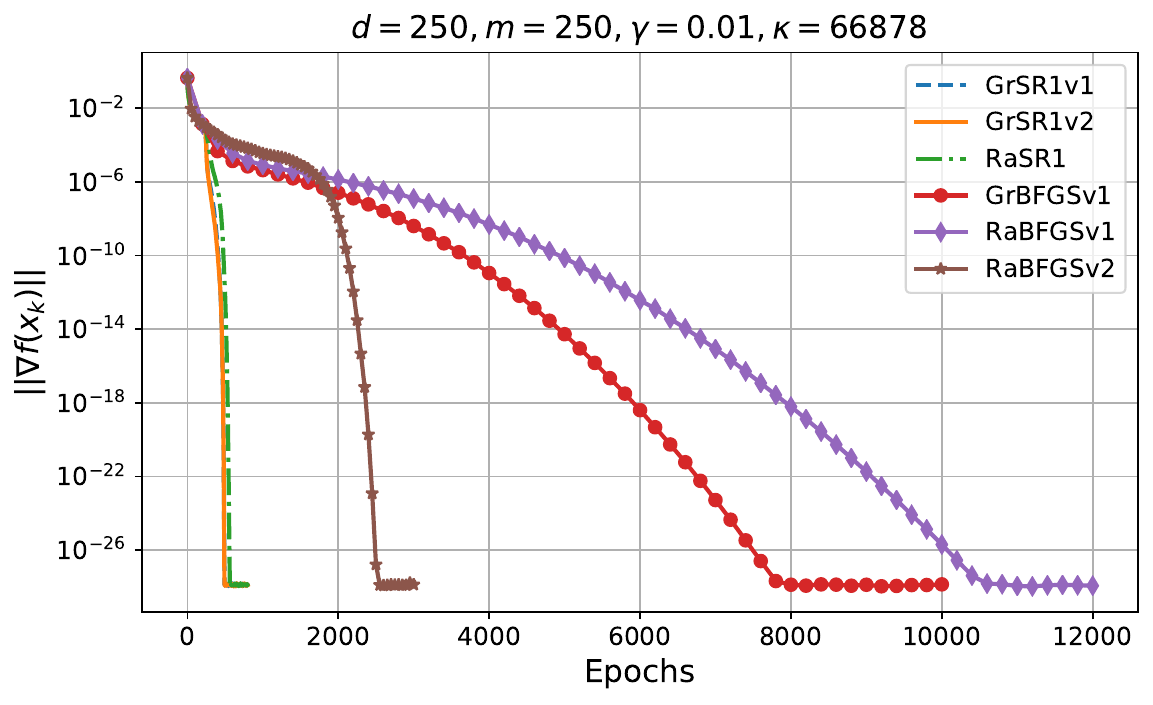}
		\caption{$\gamma=0.01, \varkappa=66878$.}\label{fig:logsumexp-a6a-reg-0.01}
	\end{subfigure}	
	\caption{Comparison of SR1 and BFGS updates for Regularized Log-Sum-Exp. The dimension $ d $, the number $ m $ of linear functions, the regularization coefficient $\gamma$ and condition number $\varkappa$ are displayed in the title of each graph. The lines of GrSR1v1 and GrSR1v2 are overlapped in each figure.}
	\label{fig:logsumexp-a6a-reg}
\end{figure}

We consider our scaled direction is more suitable for a constant Hessian matrix as the quadratic objective has. 
Thus we still have a better convergence rate in the last few iterations when Hessians are nearly unchanged in Figure \ref{fig:logsumexp-a6a-reg-1}. 
However, the Hessian varies drastically in the initial period. Thus there is less benefit under a more accurate Hessian approximation.
When applied to the ill-conditioning setting with a large $\varkappa$ in Figures \ref{fig:logsumexp-a6a-reg-0.1} and \ref{fig:logsumexp-a6a-reg-0.01}, we find our RaBFGSv2 could be faster than GrBFGSv1 and RaBFGSv1. This implies that  our proposed method has less dependence on the condition number $\varkappa$.

\paragraph{Regularized Logistic Regression.} 

Finally, we consider a common machine learning problem: $\ell_2$-regularized logistic regression, which has the objective as 
\begin{equation*}
f(\vw) = \sum_{i=1}^n \ln\left(1+e^{-y_i\vw^\top\vx_i}\right)+\frac{\gamma}{2}\|\vw\|^2, \; \vw\in\sR^d,
\end{equation*}
where $\mX = \left[\vx_1,\dots, \vx_n \right] \in \sR^{d \times n}$ are training samples, the corresponding labels are $y_1,\dots, y_n $ \\ $\in \{+1, -1\}$, and $\gamma>0$ is the regularization coefficient. 
The gradient of function $f(\vw)$ is
\[ \nabla f(\vw) = -\sum_{i=1}^n \frac{1}{1+e^{y_i\vw^\top\vx_i}} \cdot y_i \vx_i  + \gamma\vw, \; \vw \in\sR^d. \]
Moreover, given a point $\vw \in \sR^d$, we need to be able to perform the following two actions:
\[ \ve_j^\top [\nabla^2 f(\vw)]\ve_j = \sum_{i=1}^n \frac{ e^{y_i\vw^\top\vx_i}}{\left(1+e^{y_i\vw^\top\vx_i}\right)^2} \cdot (\vx_i^\top\ve_j)^2 + \gamma, \forall 1 \leq j \leq d, \]
and for a given $\vh \in\sR^d$,
\[ \nabla^2 f(\vw) \cdot \vh = \sum_{i=1}^n \frac{ e^{y_i\vw^\top\vx_i}}{\left(1+e^{y_i\vw^\top\vx_i}\right)^2}\cdot \left(\vx_i^\top \vh\right) \cdot \vx_i + \gamma\vh. \]
Thus, both the above operations have a cost of $O(nd)$.
Furthermore, note that
\[ \nabla^2 f(\vw) = \sum_{i=1}^n \frac{ e^{y_i\vw^\top\vx_i}}{\left(1+e^{y_i\vw^\top\vx_i}\right)^2} \cdot \vx_i\vx_i^\top + \gamma\mI_d, \; \vw\in\sR^d.
\]

\begin{figure}[t]
	\centering
	\begin{subfigure}[b]{0.4\textwidth}
		\includegraphics[width=\linewidth]{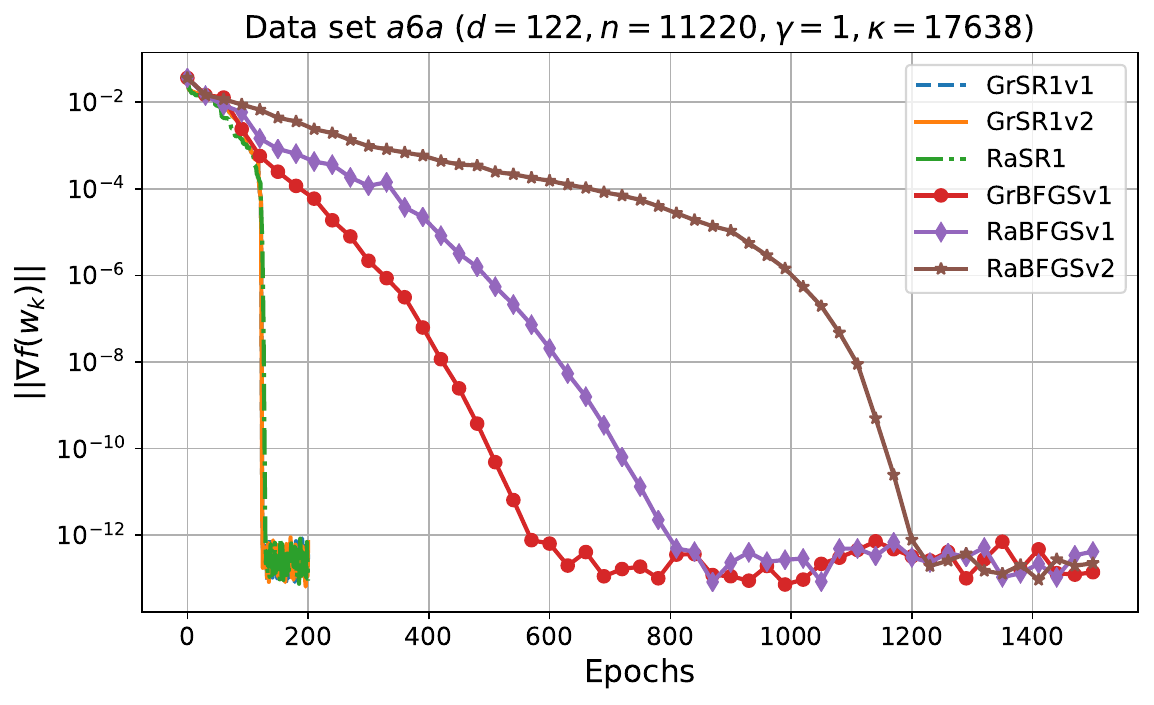}
		\caption{$\gamma=1, \varkappa=17638$.} \label{fig:logis-1}
	\end{subfigure}
	\begin{subfigure}[b]{0.4\textwidth}
		\includegraphics[width=\linewidth]{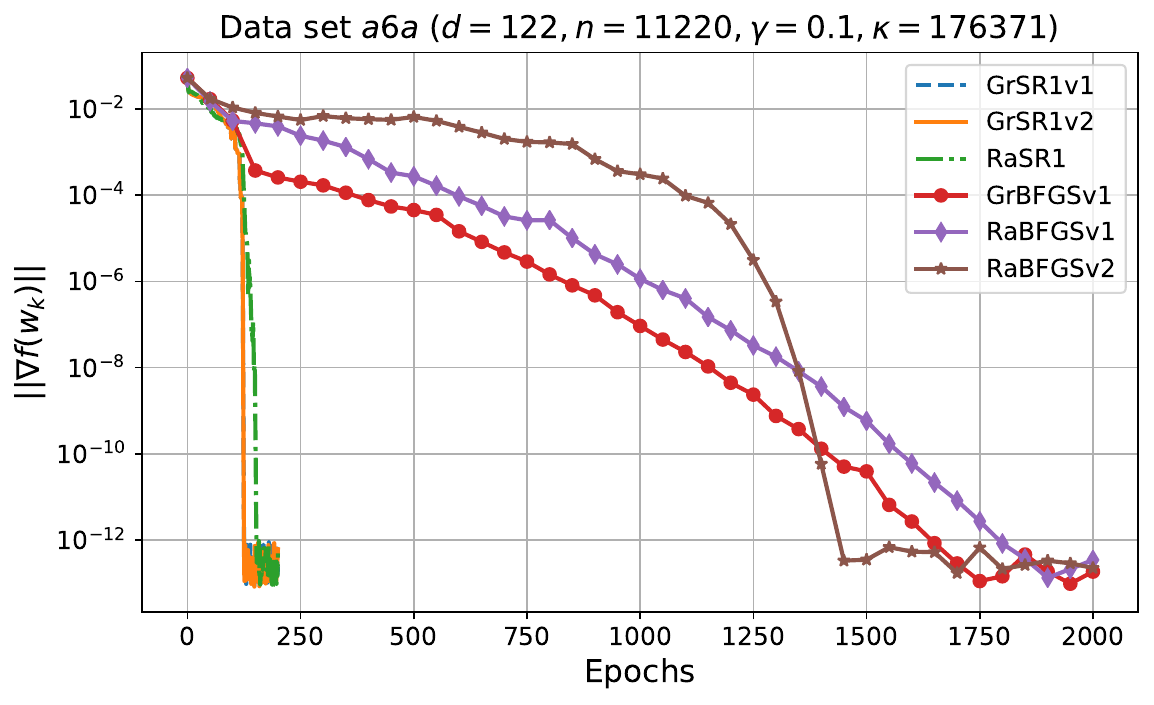}
		\caption{$\gamma=0.1, \varkappa=176371$.} \label{fig:logis-0.1}
	\end{subfigure}	
	\begin{subfigure}[b]{0.4\textwidth}
		\includegraphics[width=\linewidth]{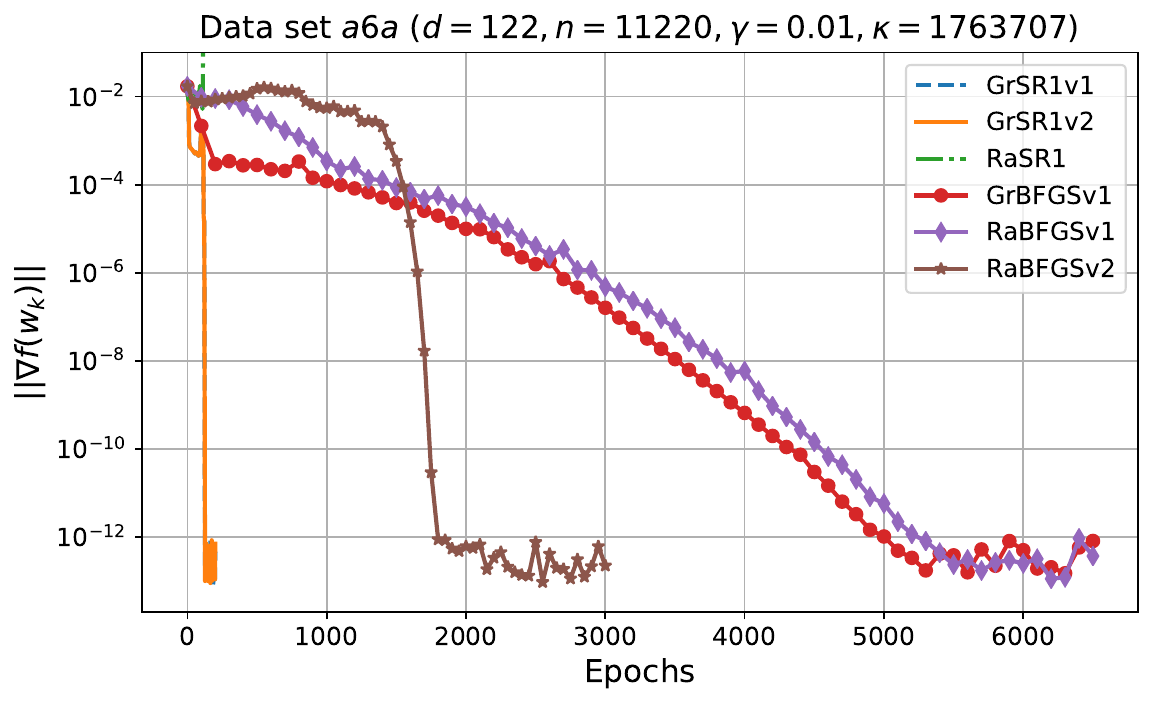}
		\caption{$\gamma=0.01, \varkappa=1763707$.} \label{fig:logis-0.01}
	\end{subfigure}	
	\begin{subfigure}[b]{0.4\textwidth}
		\includegraphics[width=\linewidth]{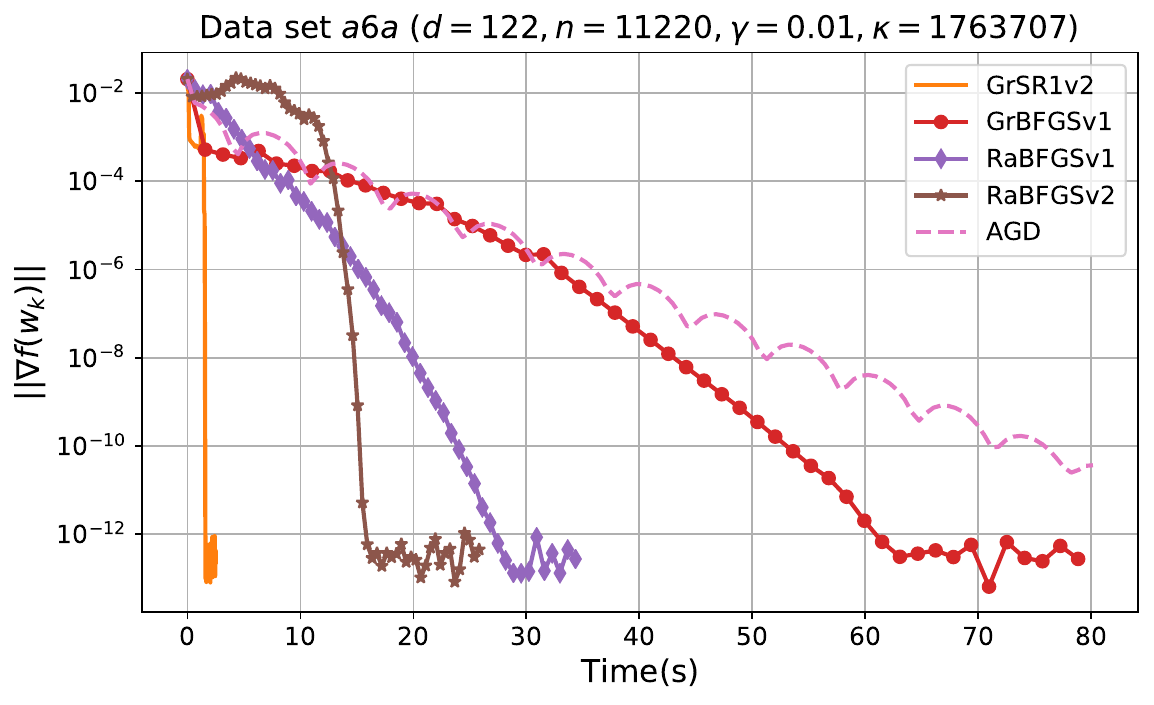}
	\caption{$\gamma=0.01, \varkappa=1763707$.} \label{fig:sr1-bfgs-time}
	\end{subfigure}	
	\caption{Comparison of SR1 and BFGS update for $\ell_2$-regularized logistic regression applied with `a6a' data from the LIBSVM collection of real-world datasets. 
	We list the name of dataset, the dimension $d$ and the condition number $\varkappa$ under the corresponding $\gamma$ in the title of each figure. (a,b,c) Comparison of convergence rates with various condition numbers. The lines of GrSR1v1 and GrSR1v2 are overlapped in some figures, and RaSR1 fails when $\gamma=0.01$ due to the unsuitable initialization and search directions.
	(d) Comparison of running time with AGD when $\gamma=0.01$.}
	\label{fig:sr1-bfgs}
\end{figure}

We obtain that the Lipschitz constant of $\nabla f(\vw)$ can be taken as $L = \frac{\lambda_{\max}(\mX\mX^\top)}{4} + \gamma$, and $\varkappa = L/\gamma$.
Additionally, we take data from the LIBSVM collection of real-world datasets for binary classification problems \citep{chang2011libsvm}. 
And we \textit{do not} apply the \textit{correction strategy} (i.e., $\tilde{\mG}_k=(1+Mr_k)\mG_k$) shown in Algorithm \ref{algo:general-quasi} recommended by \citet[Section 5.2]{rodomanov2021greedy}. 
In order to simulate the local convergence, we use the same initialization after running several standard Newton's steps to make the measure $\norm{\nabla f(\vw_0)}$ small (around $10^{-2} \sim 10^0$). 
We compare $\norm{\nabla f(\vw_k)}$ obtained by different methods. 

We show the results in Figure \ref{fig:sr1-bfgs}. 
As we can see, the general picture is the same as the Regularized Log-Sum-Exp. 
In particular, SR1-type methods are still faster than BFGS-type methods, and the greedy algorithms also converge more rapidly than the random algorithms.
GrBFGSv1 and RaBFGSv1 are faster than RaBFGSv2 in a small condition number case in Figure \ref{fig:logis-1}, but they become slower than RaBFGSv2 when the condition number becomes huge in Figures \ref{fig:logis-0.1} and \ref{fig:logis-0.01}.
Therefore, we think our RaBFGSv2 which uses scaled direction indeed has less dependence on the condition number as our theory shows. 

In addition, we also compare the running time of each method with a classical first-order method: accelerated gradient descent (AGD) following \cite{nesterov2003introductory}.
We run the standard AGD algorithm for $25000$ epochs with the same setting in Figure \ref{fig:logis-0.01}. 
As Figure \ref{fig:sr1-bfgs-time} shows, not surprisingly, we could discover the benefit of quasi-Newton methods in running time due to their superlinear convergence rates.
Furthermore, we find that GrBFGSv1 takes more time compared to the other methods because of the greedy step for searching directions. 
Thus, the greedy method is time-consuming as \cite{rodomanov2021greedy} discussed. 
But the random method solves this problem through a random choice of directions.
Moreover, we discover that random methods may fail for unsuitable initialization as the RaSR1 method in Figure \ref{fig:logis-0.01} shows, since we may encounter bad random directions during iterations, and our theoretical guarantee is also a probabilistic description. 
Hence, we recommend a mixture of greedy and random strategies in practice to balance the convergence and running complexity.

Overall, our proposed methods do not lose the superlinear convergence rates particularly in the large condition number schemes, while we also present the theoretical guarantee for these algorithms.

\section{Conclusion}\label{sec:conclude}
	In this work, we have addressed two open problems mentioned by \citet{rodomanov2021greedy}. 
	First, we have shown theoretical analysis of the random quasi-Newton methods, which also preserve a similar nonasymptotic superlinear convergence shown in the work of \citet{rodomanov2021greedy}. 
	Second, we have studied the behavior of two specific famous quasi-Newton methods: the SR1 and  BFGS methods.
	We have presented different greedy methods in contrast to the work of \citet{rodomanov2021greedy}, as well as the random version of these methods. 
	In particular, we have provided the faster Hessian approximation behavior and the condition-number-free (local) superlinear convergence rates applied to quadratic or strongly self-concordant objectives. Moreover, the experiments match our analysis  well.
	We hope that the theoretical analysis and the related work would be useful for understanding the explicit rates of quasi-Newton methods, and such convergence rates could benefit machine learning by developing new optimization methods.

\acks{We would like to thank the anonymous reviewers for their careful work and constructive comments that greatly help us improve the paper quality. We also thank an anonymous reviewer for pointing out efficient update in Proposition \ref{prop:l} and concise formulation in Lemmas \ref{lemma:random-gel} and \ref{lemma:gel-bfgs-sr1}, as well as providing a special lemma (Lemma \ref{lemma:general-update}) in Appendix.

Haishan Ye has been supported by the National Natural Science Foundation of China (No. 12101491).

}

\appendix 
\section{Auxiliary Lemmas and Theorems}
In the following, assume the objective $f(\vx)$ is an $M$-\textit{strongly self-concordant}, $\mu$-strongly convex and $L$-smooth function as Subsection \ref{subsec:random-gel} does.
\begin{lemma} \label{lemma:op-ud-l} \emph{\citep[Lemma 4.2]{rodomanov2021greedy}} 
	Let $\vx,\bm{y}\in\sR^d$, and  $r:=\|\bm{y}-\vx\|_{\vx}$. Then we have
	\begin{equation}\label{eq:fr}
	\frac{\nabla^2f(\vx)}{1+Mr}\preceq \nabla^2 f(\bm{y}) \preceq\left(1+Mr\right)\nabla^2f(\vx).
	\end{equation}
	Also, for $\mJ:= \int_{0}^{1}\nabla^2f(\vx+t(\bm{y}-\vx))dt$ and any $\bm{v}\in\{\vx, \bm{y}\}$, we have
	\begin{equation*}
	\frac{\nabla^2f(\bm{v})}{1+\frac{Mr}{2}} \preceq \mJ \preceq\left(1+\frac{Mr}{2}\right)\nabla^2f(\bm{v}).
	\end{equation*}
\end{lemma}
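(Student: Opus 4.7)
\textbf{Proof proposal for Lemma \ref{lemma:op-ud-l}.}

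The plan is to derive the first pair of bounds directly from the strong self-concordance inequality by carefully choosing the free reference points $\vz$ and $\vw$, and then to bootstrap the second pair by integrating the first over the segment from $\vx$ to $\vy$.

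For the first inequality in \eqref{eq:fr}, I would apply the definition of $M$-strong self-concordance with the choice $\vz=\vw=\vx$, giving
\[
\nabla^2 f(\vy)-\nabla^2 f(\vx)\preceq M\|\vy-\vx\|_{\vx}\,\nabla^2 f(\vx)=Mr\,\nabla^2 f(\vx),
\]
which yields the upper estimate $\nabla^2 f(\vy)\preceq(1+Mr)\nabla^2 f(\vx)$. For the lower estimate I would apply the definition with the roles of the two points swapped, selecting $\vz=\vx$ (so that the distance factor remains $\|\vx-\vy\|_{\vx}=r$) and $\vw=\vy$, producing
\[
\nabla^2 f(\vx)-\nabla^2 f(\vy)\preceq Mr\,\nabla^2 f(\vy),
\]
which rearranges to $\nabla^2 f(\vy)\succeq\frac{1}{1+Mr}\nabla^2 f(\vx)$. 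This handles \eqref{eq:fr}.

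For the bounds on $\mJ$, write $\vx_t=\vx+t(\vy-\vx)$ and observe that $\|\vx_t-\vx\|_{\vx}=tr$ while $\|\vx_t-\vy\|_{\vx}=(1-t)r$. To obtain the bounds relative to $\vv=\vx$, I would apply the first part with $\vy$ replaced by $\vx_t$ and $r$ replaced by $tr$, giving
\[
\tfrac{1}{1+Mtr}\,\nabla^2 f(\vx)\preceq\nabla^2 f(\vx_t)\preceq(1+Mtr)\,\nabla^2 f(\vx).
\]
Integrating over $t\in[0,1]$, the upper bound contributes $\int_0^1(1+Mtr)\,dt=1+Mr/2$. For the matching lower bound I would invoke Jensen's inequality using the convexity of $s\mapsto 1/(1+Ms)$ on $[0,\infty)$ to obtain $\int_0^1\frac{dt}{1+Mtr}\geq\frac{1}{1+Mr/2}$. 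The case $\vv=\vy$ is handled analogously: applying the strong self-concordance definition with $\vz=\vx$ (to keep $\|\vx_t-\vy\|_{\vx}=(1-t)r$) and with $\vw\in\{\vy,\vx_t\}$ gives, respectively,
\[
\nabla^2 f(\vx_t)\preceq(1+M(1-t)r)\,\nabla^2 f(\vy)\quad\text{and}\quad\nabla^2 f(\vx_t)\succeq\tfrac{1}{1+M(1-t)r}\,\nabla^2 f(\vy);
\]
integration and Jensen yield the claimed bounds for $\mJ$ in terms of $\nabla^2 f(\vy)$.

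The main obstacle I anticipate is not any single calculation but the bookkeeping: the strong self-concordance definition has two free reference points $\vz$ and $\vw$ which must be chosen differently for each of the four bounds so that the resulting distance factors simplify cleanly to multiples of $r$ and the matrix on the right-hand side is the one we want to compare against. The only nontrivial analytic step is invoking Jensen's inequality for the convex function $s\mapsto 1/(1+Ms)$, which is what upgrades the naive integral $\frac{\ln(1+Mr)}{Mr}$ to the sharper factor $\frac{1}{1+Mr/2}$ and matches the upper bound in a symmetric form.
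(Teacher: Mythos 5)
Your proof is correct and follows essentially the same route as the cited source (Rodomanov and Nesterov, Lemma 4.2): the two bounds in \eqref{eq:fr} come from the strong self-concordance inequality with the reference points $\vz,\vw$ chosen exactly as you do, and the bounds on $\mJ$ follow by integrating the pointwise estimates along the segment, with the lower bound upgraded via convexity of $s\mapsto 1/(1+Ms)$ (Jensen). All four choices of $(\vz,\vw)$ and the computation $\int_0^1\|\vx_t-\vv\|_{\vx}\,dt=r/2$ check out, so there is nothing to add.
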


\begin{lemma} \label{lemma:op-ud} \emph{\citep[Lemmas 4.3 and 4,4]{rodomanov2021greedy}}
	Let $\vx\in\sR^d$, and a symmetric matrix $\mG$, such that $\nabla^2 f(\vx)\preceq \mG\preceq \eta \nabla^2 f(\vx)$, for some $\eta\geq 1$. 
	Let $\vx_+ \in \sR^d$, and $r= \|\vx_+-\vx\|_{\vx}$. Then we have
	\begin{equation}\label{eq:tg}
	    \tilde{\mG} := \left(1+Mr\right)\mG\succeq \nabla^2 f(\vx_+),
	\end{equation}
	and for all $\bm{u} \in \sR^d$ and $\tau\in[0,1]$, we have
	\begin{equation}\label{eq:tg-hes}
	    \nabla^2 f(\vx_+) \preceq \Broyd_{\tau}\left(\tilde{\mG}, \nabla^2 f(\vx_+), \bm{u}\right)\preceq \left[\left(1+Mr\right)^2\eta\right]\nabla^2 f(\vx_+).
	\end{equation}
	More specifically, if $\vx_+ = \vx-\mG^{-1}\nabla f(\vx)$, and letting $\lambda := \lambda_f(\vx)$ be such that $M\lambda\leq 2$, then,
	\begin{equation}\label{eq:lam-eta}
	    r \leq \lambda, \text{ and } \lambda_f(\vx_+) \leq \left(1+\frac{M\lambda}{2}\right) \frac{\eta-1+\frac{M\lambda}{2}}{\eta} \cdot \lambda.
	\end{equation}
\end{lemma}

\begin{theorem} \emph{\citep[Extension of][Theorem 4.7]{rodomanov2021greedy}}  \label{app:aux-thm}
	Suppose in Algorithm \ref{algo:general-quasi-ref}, a (random) initialization $\mG_0$ satisfies
	\begin{equation}\label{eq:init-G}
	    \nabla^2 f(\vx_0) \preceq \mG_0 \preceq \eta \nabla^2 f(\vx_0)
	\end{equation}
	for some $\eta \geq 1$, and the (random) initial point $\vx_0$ is sufficiently close to the solution:
	\begin{equation}\label{eq:init-linear}
	    M\lambda_0 \leq \frac{\ln\frac{3}{2}}{4\eta}.
	\end{equation}
	Then, for all $k\geq 0$, we have
	\begin{equation}\label{eq:hessian}
		\nabla^2 f(\vx_k) \preceq \mG_k \preceq e^{2M\sum_{i=0}^{k-1}\lambda_i}\eta \nabla^2 f(\vx_k) \preceq\frac{3\eta}{2}\nabla^2 f(\vx_k),
	\end{equation}
	and
	\begin{equation}\label{eq:lambda-a}
		\lambda_k \leq \left(1-\frac{1}{2\eta}\right)^k\lambda_0,
	\end{equation}
	where $\lambda_k$ is defined in Eqs.~\eqref{eq:lambda} and \eqref{eq:lam-sig}.
\end{theorem}

\begin{algorithm}[t]
	\caption{Quasi-Newton Method \cite[Scheme (4.17)]{rodomanov2021greedy}}
	\begin{algorithmic}[1]
		\STATE Initialization: Choose $\mG_0\succeq \nabla^2 f(\vx_0)$ (such as $\mG_0=L\mI_d$).
		\FOR{$ k \geq 0 $}
		\STATE Update $\vx_{k+1} = \vx_{k} - \mG_k^{-1}\nabla f(\vx_k)$.
		\STATE Compute $r_k = \|\vx_{k+1}-\vx_k\|_{\vx_k}$ and set $\tilde{\mG}_k = \left(1+Mr_k\right)\mG_k$.
		\STATE Choose $\bm{u}_k\in\sR^d$ and $\tau_k\in[0, 1]$.
		\STATE Compute $\mG_{k+1}=\Broyd_{\tau_k}\left(\tilde{\mG}_k,\nabla^2 f(\vx_{k+1}), \bm{u}_k\right)$.
		\ENDFOR
	\end{algorithmic}
	\label{algo:general-quasi-ref}
\end{algorithm}
\begin{proof}
    The proof is similar as Theorem 4.7 by \citet{rodomanov2021greedy}. We give the detail for completeness.
    
    From Eq.~\eqref{eq:init-G}, both Eqs.~\eqref{eq:hessian} and \eqref{eq:lambda-a} are satisfied for $k=0$.
    Now let $k\geq 0$, and suppose Eqs.~\eqref{eq:hessian} and \eqref{eq:lambda-a} have already been proved for all $0\leq k'\leq k$. 
    Denote 
    \begin{equation}\label{eq:etak}
        \eta_k := e^{2M\sum_{i=0}^{k-1}\lambda_i}\eta.
    \end{equation}
    Then by inductive hypothesis, we have
    \begin{equation}\label{eq:hess}
        \nabla^2 f(\vx_k) \preceq \mG_k \preceq \eta_k \nabla^2 f(\vx_k).
    \end{equation}
    Note that
    \begin{equation}\label{eq:m-lam-ran-1}
	    M\sum_{i=0}^k \lambda_i \stackrel{\eqref{eq:lambda-a}}{\leq} M\lambda_0\sum_{i=0}^{k}\left(1-\frac{1}{2\eta}\right)^i \leq 2\eta M \lambda_0 \stackrel{\eqref{eq:init-linear}}{\leq} \frac{\ln\frac{3}{2}}{2}<1.
	\end{equation}
	Hence, $M\lambda_k<2$, and by Lemma \ref{lemma:op-ud}, we have
	\begin{equation}\label{eq:r-lam-ran-1}
		r_k:=\|\vx_{k+1}-\vx_k\|_{\vx_k} \stackrel{\eqref{eq:lam-eta}}{\leq} \lambda_k,
	\end{equation}
	and
	\begin{equation}\label{eq:lam-re-ran-1}
	    \lambda_{k+1} \stackrel{\eqref{eq:lam-eta}\eqref{eq:hess}}{\leq} \left(1+\frac{M\lambda_k}{2}\right) \frac{\eta_k-1+\frac{M\lambda_k}{2}}{\eta_k}\lambda_k \leq \left(1+\frac{M\lambda_k}{2}\right) \left(1-\frac{1-\frac{M\lambda_k}{2}}{\eta_k}\right)\lambda_k.
	\end{equation}
	Using the inequality $1-t \geq e^{-2t}$ when $ 0 \leq t \leq 1/2$, we get
	\[ \frac{1-\frac{M\lambda_k}{2}}{\eta_k} \stackrel{\eqref{eq:m-lam-ran-1}}{\geq} e^{-M\lambda_k} \eta_k^{-1} \stackrel{\eqref{eq:etak}}{\geq } e^{-2M\sum_{i=0}^{k}\lambda_i}\eta^{-1} \stackrel{\eqref{eq:m-lam-ran-1}}{\geq} \frac{2}{3\eta}. \]
	Moreover, noting that $M\lambda_k \stackrel{\eqref{eq:lambda-a}}{\leq} M\lambda_0 \stackrel{\eqref{eq:init-linear}}{\leq} \frac{1}{8\eta}$, we obtain
	\[ \left(1+\frac{M\lambda_k}{2}\right) \left(1-\frac{1-\frac{M\lambda_k}{2}}{\eta_k}\right) \leq \left(1+\frac{1}{16\eta}\right)\left(1-\frac{2}{3\eta}\right) \leq 1-\frac{2}{3\eta}+\frac{1}{16\eta} \leq 1-\frac{1}{2\eta}. \]
	Consequently,
	\[ \lambda_{k+1} \stackrel{\eqref{eq:lam-re-ran-1}}{\leq} \left(1-\frac{1}{2\eta}\right) \lambda_k \stackrel{\eqref{eq:lambda-a}}{\leq} \left(1-\frac{1}{2\eta}\right)^{k+1} \lambda_0. \]
	Finally, from Eq.~\eqref{eq:tg-hes} in Lemma \ref{lemma:op-ud}, it follows that
	\begin{eqnarray*}
	    \nabla^2 f(\vx_{k+1}) &\preceq& \mG_{k+1} \preceq \left(1+Mr_k\right)^2 \eta_k \nabla^2 f(\vx_{k+1}) \stackrel{\eqref{eq:r-lam-ran-1}}{\preceq} \left(1+M\lambda_k\right)^2 \eta_k \nabla^2 f(\vx_{k+1}) \\
	    & \preceq & e^{2M\lambda_k} \eta_k \nabla^2 f(\vx_{k+1}) 
	    \stackrel{\eqref{eq:etak}}{=} e^{2M\sum_{i=0}^{k}\lambda_i} \eta \nabla^2 f(\vx_{k+1}) \stackrel{\eqref{eq:m-lam-ran-1}}{\preceq} \frac{3\eta}{2} \nabla^2 f(\vx_{k+1}).
	\end{eqnarray*}
	Thus, Eqs.~\eqref{eq:hessian} and \eqref{eq:lambda-a} are valid for index $k+1$, and we can continue by induction.
\end{proof}
\begin{remark}
	Note that the choice of $\{\vu_k\}$ and the update rule in Algorithm \ref{algo:general-quasi-ref} are arbitrary, thus Algorithms \ref{algo:general-random} and \ref{algo:general-quasi} can be viewed as special cases of Algorithm \ref{algo:general-quasi-ref}. 
	Therefore, Theorem \ref{app:aux-thm} always holds as long as the initial point $\vx_0$ is always sufficiently close to the solution based on the initial approximate matrix $\mG_0$, and Eq.~\eqref{eq:lambda-a} holds without any expectations.
\end{remark}

\begin{lemma} \emph{\citep[Extension of][Lemma 4.8]{rodomanov2021greedy}} \label{lemma:general-update}
\quad \\
    Following the update in Algorithms \ref{algo:general-random} or \ref{algo:general-quasi}, we can obtain 
    \begin{equation}\label{eq:delta-k}
        \forall k\geq 0, \nabla^2 f(\vx_k) \preceq \mG_k \preceq (1+\delta_k)\nabla^2 f(\vx_k),
    \end{equation}
    where $\delta_k$ is a random sequence which satisfies the following recurrence:
    \begin{equation}\label{eq:delta-up}
        \E_{\vu_k}\delta_{k+1} \leq \left(1-t^{-1}\right)\left(1 + Mr_k\right)^2\left(\delta_k+2c d Mr_k\right)
    \end{equation}
    for some constants $c, t \geq 1$. Particularly, 
    \begin{enumerate}
        \item[\emph{1)}] for the Broyden method in Algorithm \ref{algo:general-random}, one has $\delta_k=\sigma_k, t=d\varkappa, c=1$;
        \item[\emph{2)}] for the BFGS method in Algorithm \ref{algo:general-quasi}, one has $\delta_k=\sigma_k, t=d, c=1$;
        \item[\emph{3)}] for the SR1 method in Algorithm \ref{algo:general-quasi}, one has $\delta_k=d\varkappa\tau_k/\tr[\nabla^2 f(\vx_k)], t=d, c=\varkappa$.
    \end{enumerate}
    Here, $r_k := \|\vx_{k+1}-\vx_k\|_{\vx_k}, \sigma_k$ and $\tau_k$ are defined in Eq.~\eqref{eq:lam-sig}.
\end{lemma}

\begin{proof}
    From the update rule in Algorithms \ref{algo:general-random} and \ref{algo:general-quasi}, we apply Lemma \ref{lemma:op-ud} to obtain 
    \begin{equation}\label{eq:g-hes}
        \mG_{k} \stackrel{\eqref{eq:tg}}{\succeq} \nabla^2 f(\vx_k), \; \forall k \geq 0.
    \end{equation} 
    Now for all $k \geq 0$, we define $\eta_k = \norm{[\nabla^2 f(\vx_k)]^{-1/2}\mG_k[\nabla^2 f(\vx_k)]^{-1/2}}$. Then $\eta_k\stackrel{\eqref{eq:g-hes}}{\geq} 1$ and $\mG_k \preceq \eta_k \nabla^2 f(\vx_k)$. Hence,
	\begin{equation*}
    	\begin{aligned}
    	    \eta_k-1 &= \norm{[\nabla^2 f(\vx_k)]^{-1/2}(\mG_k-\nabla^2 f(\vx_k))[\nabla^2 f(\vx_k)]^{-1/2}} \\
    	    &\leq \tr\left([\nabla^2 f(\vx_k)]^{-1/2}\left(\mG_k-\nabla^2 f(\vx_k)\right)[\nabla^2 f(\vx_k)]^{-1/2} \right) \stackrel{\eqref{eq:lam-sig}}{=} \sigma_{k}.
    	\end{aligned}
	\end{equation*}
	Moreover, from $\mu\mI_d \preceq \nabla^2 f(\vx_k) \preceq L\mI_d, \forall k\geq 0$, we also have 
	\begin{equation*}
    	\begin{aligned}
    	    \eta_k-1 &= \norm{[\nabla^2 f(\vx_k)]^{-1/2}(\mG_k-\nabla^2 f(\vx_k))[\nabla^2 f(\vx_k)]^{-1/2}} \leq \frac{\|\mG_k-\nabla^2 f(\vx_k)\|}{\mu} \\
    	    &\leq \frac{\tr\left[\mG_k-\nabla^2 f(\vx_k)\right]}{\mu} \stackrel{\eqref{eq:lam-sig}}{=} \frac{\tau_k}{\mu} \leq \frac{d L}{\tr[\nabla^2 f(\vx_k)]} \cdot \frac{\tau_k}{\mu} =\frac{d\varkappa\tau_k}{\tr[\nabla^2 f(\vx_k)]}.
    	\end{aligned}
	\end{equation*}
	Therefore, the choices of $\delta_k=\sigma_k$ and $\frac{d\varkappa\tau_k}{\tr[\nabla^2 f(\vx_k)]}$ are valid to guarantee Eq.~\eqref{eq:delta-k}.
	Next, we deduce Eq.~\eqref{eq:delta-up} based on the specific choice of $\delta_k$.
	
	1) For the Broyden method in Algorithm \ref{algo:general-random}, by Theorem \ref{thm:rand-update}, one step update gives 
	\[ \E_{\vu_k} \sigma_{k+1} \stackrel{\eqref{eq:lam-sig}}{=} \E_{\vu_k}  \sigma_{\nabla^2f(\vx_{k+1})}\left(\mG_{k+1}\right) \stackrel{\eqref{eq:random-k}}{\leq} \left(1-\frac{1}{d\varkappa}\right) \sigma_{\nabla^2f(\vx_{k+1})}(\tilde{\mG}_k), \forall k \geq 0. \]
	We deduce the result by bounding the last term as below: 
	\begin{eqnarray}
	        \sigma_{\nabla^2f(\vx_{k+1})}(\tilde\mG_{k})  &\stackrel{\eqref{eq:sigmaA}}{=}& \tr\left( [\nabla^2f(\vx_{k+1})]^{-1}\tilde{\mG}_k \right) {-} d = \left(1 {+} Mr_k\right)\tr\left( [\nabla^2f(\vx_{k+1})]^{-1}\mG_k \right) {-} d \nonumber \\
	        &\stackrel{\eqref{eq:fr}}{\leq}& \left(1 {+} Mr_k\right)^2 \tr\left( [\nabla^2f(\vx_{k})]^{-1}\mG_k \right) {-}d \stackrel{\eqref{eq:lam-sig}}{=} \left(1 {+} Mr_k\right)^2\left(\sigma_k {+} d\right) {-} d \nonumber \\
	        &=& \left(1 {+} Mr_k\right)^2\sigma_k {+} d\left[2Mr_k {+} (Mr_k)^2\right] \leq
	        \left(1 {+} Mr_k\right)^2\left(\sigma_k {+} 2d Mr_k\right). \label{eq:sig-bound}
	    \end{eqnarray}
	
	2) For the BFGS method in Algorithm \ref{algo:general-quasi}, by Theorem \ref{thm:bfgs-update}, one step update gives 
	\[ \E_{\vu_k} \sigma_{k+1} \stackrel{\eqref{eq:sigma-k}}{\leq} \left(1-\frac{1}{d}\right) \sigma_{\nabla^2f(\vx_{k+1})}(\tilde{\mG}_k), \forall k \geq 0. \]
	The remaining proof is the same as Eq.~\eqref{eq:sig-bound}.
    
    3) For the SR1 method in Algorithm \ref{algo:general-quasi}, from Theorem \ref{thm:sr1-update}, one step update gives 
    \[ \E_{\vu_k}\tau_{k+1} \stackrel{\eqref{eq:tau-k}}{\leq} \left(1-\frac{1}{d}\right)\tau_{\nabla^2f(\vx_{k+1})}(\tilde{\mG}_k) , \forall k \geq 0. \]
	Now we can bound the last term as below: 
	\begin{eqnarray*}
    	\tau_{\nabla^2f(\vx_{k+1})}(\tilde{\mG}_k) &\stackrel{\eqref{eq:lam-sig}}{=}& \tr\left(\tilde{\mG}_{k}-\nabla^2 f(\vx_{k+1})\right) \stackrel{\eqref{eq:fr}}{\leq}  \tr\left(\left(1+Mr_k\right)\mG_k-\frac{\nabla^2 f(\vx_{k})}{1+Mr_k}\right) \\
    	&\stackrel{\eqref{eq:lam-sig}}{=}& \left(1+Mr_k\right)\tau_k+\left(1+Mr_k-\frac{1}{1+Mr_k}
    	\right)\tr\left[\nabla^2 f(\vx_k)\right] \\
    	&\stackrel{(i)}{\leq} &
    	\left(1+Mr_k\right)\tau_{k}+2Mr_k\tr\left[\nabla^2 f(\vx_{k})\right] \\
    	&\leq &\left(1+Mr_k\right)\left(\frac{\tau_{k}}{\tr[\nabla^2 f(\vx_k)]}+2Mr_k\right)\tr\left[\nabla^2 f(\vx_{k})\right] \\ &\stackrel{\eqref{eq:fr}}{\leq}& \left(1+Mr_k\right)^2\left(\frac{\tau_{k}}{\tr[\nabla^2 f(\vx_k)]}+2Mr_k\right)\tr\left[\nabla^2 f(\vx_{k+1})\right],
	\end{eqnarray*}
	where $(i)$ uses inequality $1+a-\frac{1}{1+a} \leq 2a, a \geq 0$.
	Thus, by replacing $\delta_k=\frac{d\varkappa\tau_k}{\tr[\nabla^2 f(\vx_k)]}$, we obtain
	\[ \E_{\vu_{k}}\delta_{k+1} \leq \left(1-\frac{1}{d}\right)\left(1+Mr_k\right)^2\left(\delta_k+2\varkappa d Mr_k\right). \]
\end{proof}

\begin{lemma}\label{lemma:p}
    Suppose there exist some constants $a \geq 0, t > 1$, and a nonnegative random sequence $\{\mX_k\}$ satisfies
    \[ \E \mX_k \leq a\left( 1-\frac{1}{t} \right)^k, \forall k \geq 0. \]
    Then for any $\delta \in(0, 1)$, with probability at least $1-\delta$, we have 
    \[ \forall k \geq 0, \mX_k \leq \frac{a t^2}{\delta} \left(1-\frac{1}{1+t}\right)^{k}. \]
\end{lemma}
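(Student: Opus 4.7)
The plan is a standard Markov-plus-union-bound argument, where the specific exponent $1/(1+t)$ in the conclusion is chosen precisely so that the resulting geometric series telescopes cleanly.

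First, I would fix $\delta \in (0,1)$ and, for each $k \geq 0$, define the threshold
\[
    \lambda_k := \frac{a t^2}{\delta}\left(1-\frac{1}{1+t}\right)^{k}.
\]
By Markov's inequality applied to the nonnegative random variable $X_k$, together with the hypothesis $\E X_k \leq a(1-1/t)^k$, one obtains
\[
    \Pr(X_k > \lambda_k) \;\leq\; \frac{\E X_k}{\lambda_k} \;\leq\; \frac{\delta}{t^2}\,\rho^k, \qquad \text{where } \rho := \frac{1-1/t}{1-1/(1+t)}.
\]

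Next, the key algebraic observation is that $\rho$ collapses nicely:
\[
    \rho \;=\; \frac{(t-1)/t}{t/(t+1)} \;=\; \frac{(t-1)(t+1)}{t^2} \;=\; 1-\frac{1}{t^2} \;\in\;(0,1).
\]
Consequently $\sum_{k=0}^{\infty}\rho^k = t^2$, and the union bound yields
\[
    \Pr\!\left(\exists\, k\geq 0:\; X_k>\lambda_k\right) \;\leq\; \sum_{k=0}^{\infty}\frac{\delta}{t^2}\,\rho^k \;=\; \frac{\delta}{t^2}\cdot t^2 \;=\; \delta.
\]
Taking complements, with probability at least $1-\delta$, $X_k\leq \lambda_k$ for all $k\geq 0$ simultaneously, which is exactly the desired conclusion.

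There is no real obstacle here once one guesses the right thresholds; the only mild subtlety is recognizing that the apparent mismatch between the decay rate $1-1/t$ in the hypothesis and $1-1/(1+t)$ in the conclusion is exactly what makes the ratio $\rho = 1-1/t^2$, producing a summable geometric series whose total $t^2$ cancels the $t^2$ factor inside $\lambda_k$. The nonnegativity of $X_k$ is used to invoke Markov's inequality, and no further properties of the sequence $\{X_k\}$ (such as independence across $k$) are required because the union bound handles arbitrary joint distributions.
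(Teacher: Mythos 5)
Your proof is correct and follows essentially the same route as the paper's: Markov's inequality at each $k$ with thresholds decaying like $(1-\frac{1}{1+t})^k$, followed by a union bound over the resulting geometric series with ratio $\rho = 1-\frac{1}{t^2}$ summing to $t^2$. The only cosmetic difference is that the paper keeps a free parameter $q$ in the thresholds and sets $q=1-\frac{1}{t^2}$ at the end, whereas you plug in the optimized threshold directly.
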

\begin{proof}
	If $a=0$, then by $\mX_k \geq 0, \forall k\geq 0$, we can see $\mX_k=0$, a.s. Then the results trivial hold.
	Now we consider $a>0$. Noting that $\mX_k \geq 0$ and using Markov's inequality, we have for any $\epsilon>0$, 
	\begin{equation}\label{eq:pp}
	\sP\left(\mX_k > \frac{a}{\epsilon} \left( 1-\frac{1}{t} \right)^k \right) \leq \sP\left(\mX_k \geq \frac{a}{\epsilon} \left( 1-\frac{1}{t} \right)^k \right) \leq \frac{\E \mX_k}{a\left( 1-\frac{1}{t} \right)^k} \cdot \epsilon \leq \epsilon.
	\end{equation}
	Choosing $\epsilon_k = \delta (1-q)q^k$ for some $q\in(0,1)$ and applying the union bound, we obtain
	\begin{equation*}
	    \sP\left(\exists k \geq 0, \mX_k > \frac{a}{\epsilon_k} \left(1-\frac{1}{t}\right)^{k}\right) 
	    \leq \sum_{k=0}^{+\infty} \sP\left(\mX_k > \frac{a}{\epsilon_k} \left(1-\frac{1}{t}\right)^{k} \right) \stackrel{(\ref{eq:pp})}{\leq} \sum_{k=0}^{+\infty} \epsilon_k = \delta.
	\end{equation*}
	Therefore, with probability at least $1-\delta$, we have 
	\[ \forall k \geq 0, \mX_k \leq \frac{a}{\delta (1-q)q^k} \left(1-\frac{1}{t}\right)^{k}. \]
	If we let $q = 1-\frac{1}{t^2}$, then we can simplify the above inequality into
	\[ \forall k \geq 0, \mX_k \leq \frac{a t^2}{\delta} \left(1+\frac{1}{t}\right)^{-k} = \frac{a t^2}{\delta} \left(1-\frac{1}{t+1}\right)^{k}. \]
\end{proof}

\section{Missing Proofs of Matrix Approximation}

\subsection{Proof of Theorem \ref{thm:rand-update}}\label{app:miss-prove-random1}
\begin{proof}
	From Lemma \ref{lemma:monotonic}, we have $\mG_k\succeq\mA$ and $\sigma_k\stackrel{\eqref{eq:lam-sig}}{=}\sigma_{\mA}(\mG_{k})\geq 0, \forall k\geq 0$. 
	Moreover, from Eqs.~\eqref{eq:sigma-bound} and \eqref{eq:randomu}, we get
	\begin{eqnarray*}
		\E_{\vu_k} \sigma_{k+1} &\stackrel{\eqref{eq:sigma-bound}}{\leq}& \sigma_k - \frac{1}{L} \tr\left[(\bm{G}_k-\mA) \cdot \E_{\vu_k} \frac{\bm{u}_k\bm{u}_k^\top }{\bm{u}_k^\top\bm{u}_k} \right] \stackrel{\eqref{eq:randomu}}{=} \sigma_k - \frac{1}{d L} \tr(\bm{G}_k-\mA) \\
		&\stackrel{\eqref{eq:ass-A}}{\leq}& \sigma_k - \frac{\mu}{d L} \tr\left[(\bm{G}_k-\mA)\mA^{-1}\right] = \left(1-\frac{1}{d\varkappa}\right)\sigma_k.
	\end{eqnarray*}
	Finally, taking the expectation of all randomness, we get
	\begin{equation*}
	    \E \sigma_k \leq \left(1-\frac{1}{d\varkappa}\right) \E \sigma_{k-1} \leq \cdots \leq \left(1-\frac{1}{d\varkappa}\right)^{k} \E \sigma_0, \forall k\geq 1.
	\end{equation*}
\end{proof}

\subsection{Proof of Theorem \ref{thm:sr1-update}}\label{app:miss-prove1}
\begin{proof}
    Denoting $\mG_{k+1} := \text{SR1}(\mG_k, \mA, \vu_k)$ and $\mR_k := \mG_k-\mA , \forall k \geq 0$,  we have the update rule:
    \begin{equation}\label{eq:Rk}
	\mR_{k+1} \stackrel{\eqref{eq:sr1}}{=} 
	\begin{cases}
	\mR_k - \frac{\mR_k\vu_k \vu_k^\top \mR_k}{\vu_k^\top\mR_k\vu_k}, & \text{ if } \mR_k\vu_k \neq \bm{0}; \\
	\mR_k, & \text{ otherwise}.
	\end{cases}
	\end{equation}
	We also have $\mR_k \succeq \bm{0}$ by Lemma \ref{lemma:monotonic} since $\mR_0 \succeq \bm{0}$. It is easily seen that
	1) $\mathrm{Ker}(\mR_k) \subseteq \mathrm{Ker}(\mR_{k+1}) := \{\vv: \mR_{k+1}\vv = \bm{0}\}$, and 2) $\vu_k \in \mathrm{Ker}(\mR_{k+1})$ from Eq.~\eqref{eq:Rk}.
    
    \textbf{For the greedy method}, we denote $\bar\vu_k = \bar\vu_A(\mG_k), \forall k \geq 0$.
	If for some $k' < d$, $\mR_{k'} = \bm{0}$, then from Eq.~\eqref{eq:Rk}, we have $\mR_{k} = \bm{0}, \forall k \geq k'$. 
    Thus, Eq.~\eqref{eq:tau-k} trivially holds for $k \geq k'$.
    
    Now we suppose $\forall 0 \leq k < d, \mR_k \neq \bm{0}$. Then by $\mR_k \succeq \bm{0}$ and $\mR_k \neq \bm{0}$, we must have $\bar\vu_k \not\in \mathrm{Ker}(\mR_k)$ in view of Eq.~\eqref{eq:greedy-sr1} for all $0 \leq k < d$.
    Additionally, $\bar\vu_i \in \mathrm{Ker}(\mR_k), \forall 0\leq i < k$ by 1) and 2), so we can see $\bar\vu_i \neq \bar\vu_k, \forall i \neq k$. 
    Thus, at least $k$ of the diagonal elements of $\mR_k$ must be zero, leading to
    \begin{equation}\label{eq:uru}
        \max_{\vu \in \{\ve_1, \dots, \ve_d\}} \vu^\top \mR_k\vu \geq \frac{1}{d-k} \cdot \tr(\mR_k) = \frac{1}{d-k} \cdot \tau_k.
    \end{equation}
    Finally, for all $0 \leq k < d$, since $\bar\vu_k \not \in \mathrm{Ker}(\mR_k)$, then Eq.~\eqref{eq:tau_update} is well-defined. Thus, we get
    \begin{align*}
        &\tau_{k+1} \stackrel{\eqref{eq:tau_update}}{=} \tau_k - \frac{\bar\vu_k^\top \mR_k^2 \bar\vu_k}{\bar\vu_k^\top\mR_k\bar\vu_k} \stackrel{\eqref{eq:cauchy}}{\leq} \tau_k - \frac{\bar\vu_k^\top\mR_k\bar\vu_k}{\bar\vu_k^\top\bar\vu_k}
        \stackrel{\eqref{eq:greedy-sr1}}{=} \tau_k - \max_{\vu \in \{\ve_1, \dots, \ve_d\}} \vu^\top \mR_k\vu \stackrel{\eqref{eq:uru}}{\leq} \left(1-\frac{1}{d-k}\right) \tau_k.
    \end{align*}
	Consequently, we have for all $1 \leq k \leq d$,
	\begin{equation*}
		\tau_k \leq \frac{d-k}{d-k+1} \tau_{k-1} \leq \cdots \leq \left[\prod_{j=1}^{k} \frac{d-j}{d-j+1}\right] \tau_0 = \left(1-\frac{k}{d}\right) \tau_0.
	\end{equation*}
	Then $\tau_d=0$, which leads to $\mG_d=\mA$. Further we obtain $\mG_k=\mA, \forall k \geq d$ following Eq.~\eqref{eq:Rk}. Therefore, $\tau_k=0, \forall k \geq d$. We conclude that for all $k\geq 0$, $\tau_k \leq \left(1-\frac{k}{d}\right)_+ \tau_0$.
	
	\textbf{For the random method}, $\forall k \geq 0, \vu_k$s are independently chosen from an identical spherically symmetric distribution, such as $\mathcal{N}(0, \mI_d), \mathrm{Unif}(\mathcal{S}^{d-1})$.
	
	We first consider $0 \leq k < d$. We have $\lambda_i:= \lambda_i(\mR_k) \geq 0$ since $\mR_k \succeq \bm{0}$.
	Suppose $r_k := \mathrm{rank}(\mR_k) \geq 1$, i.e., $\mR_k \neq \bm{0}$.
	We denote $\mR_k = \mU_k \bm{\Lambda}_k \mU_k^\top$ as the spectral decomposition of $\mR_k$ with an orthogonal matrix $\mU_k$ and a diagonal matrix $\bm{\Lambda}_k = \diag\{\lambda_1,\dots, \lambda_{r_k}, 0, \dots, 0\}$, and $\vv_k = (v_1,\dots, v_d)^\top := \mU_k^\top \vu_k$. 
	Then we can derive that
	\begin{align}
    	&\mathbb{E}_{\vu_k} \frac{\vu_k^\top\mR_k^2\vu_k}{\vu_k^\top\mR_k\vu_k} \mathbbm{1}_{\{\mR_k\vu_k \neq \bm{0}\}} \stackrel{(\romannumeral1)}{=} \mathbb{E}_{\vv_k} \frac{\sum_{i=1}^{r_k}\lambda_i^2 v_i^2}{\sum_{i=1}^{r_k}\lambda_i v_i^2} \mathbbm{1}_{\{\bm{\Lambda}_k \vv_k \neq \bm{0}\}} \stackrel{(\romannumeral2)}{\geq} \mathbb{E}_{\vv_k} \frac{\sum_{i=1}^{r_k}\lambda_i v_i^2}{\sum_{i=1}^{r_k} v_i^2} \mathbbm{1}_{\{\sum_{i=1}^{r_k} v_i^2 \neq 0\}} \nonumber \\
    	= \ & \sum_{i=1}^{r_k}\lambda_i \mathbb{E}_{\vv_k}\frac{v_i^2}{\sum_{j=1}^{r_k}v_j^2} \mathbbm{1}_{\{\sum_{i=1}^{r_k} v_i^2 \neq 0\}} \stackrel{(\romannumeral3)}{=} \frac{1}{r_k}\sum_{i=1}^{r_k}\lambda_i = \frac{\tr(\mR_k)}{r_k} \stackrel{\eqref{eq:lam-sig}}{=} \frac{\tau_k}{r_k}, \label{eq:exp-sr1}
	\end{align}
    where $(\romannumeral1)$ holds due to $\vu_k^\top\mR_k^i\vu_k = \vv_k^\top \bm{\Lambda_k}^i \vv_k, \forall i\geq 1$ and $\mR_k\vu_k \neq \bm{0}$ is equivalent to $\bm{\Lambda}_k \vv_k \neq \bm{0}$; $(\romannumeral2)$ holds due to the Cauchy–Schwarz inequality $\left(\sum_{i=1}^{r_k}\lambda_i^2v_i^2 \right)\left(\sum_{i=1}^{r_k}v_i^2\right)\geq \left(\sum_{i=1}^{r_k}\lambda_i v_i^2\right)^2$ with $\sum_{i=1}^{r_k}\lambda_i v_i^2 = \vu_k^\top\mR_k\vu_k>0$ since $\mR_k\vu_k \neq \bm{0}$ and $\mR_k \succeq \bm{0}$, and the fact that
	\[ \bm{\Lambda}_k \vv_k \neq \bm{0} \Leftrightarrow \exists 1 \leq i \leq r_k, s.t., \lambda_iv_i \neq 0  \Leftrightarrow \exists 1 \leq i \leq r_k, s.t., v_i \neq 0 \Leftrightarrow \sum_{j=1}^{r_k} v_j^2 \neq 0; \]
	$(\romannumeral3)$ uses the fact that $\vv_k$ is still spherically symmetric, thus also permutation invariant:
	\begin{align*}
	    &\mathbb{E}_{\vv_k} \frac{v_1^2}{\sum_{j=1}^{r_k}v_j^2}\mathbbm{1}_{\gV_k} = \dots = \mathbb{E}_{\vv_k} \frac{v_{r_k}^2}{\sum_{j=1}^{r_k}v_j^2}\mathbbm{1}_{\gV_k} = \frac{1}{r_k}\sum_{i=1}^{r_k} \mathbb{E}_{\vv_k} \frac{v_i^2}{\sum_{j=1}^{r_k}v_j^2} \mathbbm{1}_{\gV_k} \\
	    =& \frac{1}{r_k} \mathbb{E}_{\vv_k} \sum_{i=1}^{r_k} \frac{v_i^2}{\sum_{j=1}^{r_k}v_j^2} \mathbbm{1}_{\gV_k} = \frac{1}{r_k} \mathbb{E}_{\vv_k} \mathbbm{1}_{\gV_k} \stackrel{(\romannumeral4)}{=} \frac{1}{r_k}, 
	    \text{ with } \gV_k := \{\vv_k: \sum_{i=1}^{r_k} v_i^2 \neq 0\},
	\end{align*}
    where $(\romannumeral4)$ holds because $\gV_k^c$ (the complementary event of $\gV_k$) has zero Lebesgue measure. Therefore, the random choice of $\vu_k$ leads to
	\begin{align*}
	    &\E_{\vu_k} \tau_{k+1} = \E_{\vu_k} \tau_{k+1} \left[\mathbbm{1}_{\{\mR_k\vu_k\neq\bm{0}\}} + \mathbbm{1}_{\{\mR_k\vu_k = \bm{0}\}}\right] \stackrel{(\romannumeral5)}{=} \E_{\vu_k} \tau_{k+1} \mathbbm{1}_{\{\mR_k\vu_k \neq \bm{0}\}} \\ 
	    \stackrel{\eqref{eq:tau_update}}{=} & \tau_{k} - \mathbb{E}_{\vu_k} \frac{\vu_k^\top\mR_k^2\vu_k}{\vu_k^\top\mR_k\vu_k} \mathbbm{1}_{\{\mR_k\vu_k\neq\bm{0}\}} \stackrel{\eqref{eq:exp-sr1}}{\leq} \left(1-\frac{1}{r_k}\right)\tau_k, \text{ if } \mR_k \neq \bm{0},
	\end{align*}
	where $(\romannumeral5)$ uses the fact that $\mR_k\vu_k = \bm{0}$ is equal to $\sum_{i=1}^{r_k} v_i^2 = 0$, i.e., the event $\gV_k^c$, which has zero measure.

	Furthermore, if $\vu_0, \dots, \vu_{k-1}$ are linearly independent, then by 1) and 2), the dimension of $\mathrm{Ker}(\mR_i), i\leq k$ grows at least by one at every iteration, showing that $\mathrm{Ker}(\mR_k) \geq k$ and $r_k = \mathrm{rank}(\mR_k) = d- \mathrm{Ker}(\mR_k) \leq d-k$. Thus, we establish that
	\begin{equation}\label{eq:exp1}
	    \E [\tau_{k+1} | \gM_k, \mR_k\neq \bm{0} ] \leq \left(1-\frac{1}{d-k}\right) \E [\tau_k | \gM_k, \mR_k\neq \bm{0} ],
	\end{equation}
	where $\gM_k = \{ \vu_0, \dots, \vu_{k-1} \text{ are linear independent}\}, k\geq 1$ and $\gM_0$ is the full set.
	Besides, we note that $\mR_k=\bm{0}$ gives $\mR_{k+1}=\bm{0}$, then
	\begin{equation}\label{eq:exp2}
	    \E [\tau_{k+1} | \gM_k, \mR_k =\bm{0} ] = 0 = \left(1-\frac{1}{d-k}\right) \E [\tau_k | \gM_k, \mR_k = \bm{0}].
	\end{equation}
	Using the law of total expectation conditioning on $\gM_k$\footnote{$\E [\mX|\mA, \gM] \cdot \sP(\mA|\gM)+\E [\mX|\mA^c, \gM] \cdot \sP(\mA^c|\gM)=\E [\mX|\gM]$.}, we obtain 
	\begin{equation}\label{eq:exp3}
	    \E [\tau_{k+1} | \gM_k ] \stackrel{\eqref{eq:exp1}\eqref{eq:exp2}}{\leq} \left(1-\frac{1}{d-k}\right) \E [\tau_k | \gM_k ].
	\end{equation}
	Noting that $\sP(\gM_k^c) = \sP(\exists 0 \leq t \leq k-1, \vu_t \in\text{Span}\{\vu_0, \dots, \vu_{t-1}, \vu_{t+1}, \cdots, \vu_{k-1}\})=0$ since the dimension of $\text{Span}\{\vu_0, \dots, \vu_{t-1}, \vu_{t+1}, \cdots, \vu_{k-1}\}$ is at most $k-1<d$, so by the law of total expectation again\footnote{$\E [\mX| \gM] \cdot \sP(\gM)+\E [\mX|\gM^c] \cdot \sP(\gM^c)=\E \mX$.}, we conclude
	\begin{equation}\label{eq:exp4}
	    \E \tau_{k+1} \stackrel{\eqref{eq:exp3}}{\leq} \left(1-\frac{1}{d-k}\right) \E \tau_k, \forall 0 \leq k < d.
	\end{equation}
	Consequently, we have for all $1 \leq k \leq d$,
	\begin{equation*}
		\E \tau_{k} \stackrel{\eqref{eq:exp4}}{\leq} \frac{d-k}{d-k+1} \cdot \E \tau_{k-1} \leq \cdots \leq \left[\prod_{j=1}^{k}\frac{d-j}{d-j+1}\right] \E \tau_{0} = \left(1-\frac{k}{d}\right)\E \tau_0.
	\end{equation*}
	That is, we obtain $\E \tau_{d}=0$, showing that $\tau_d=0$ a.s. and $\mG_d - \mA = \bm{0}$ a.s., since $\mG_d-\mA \succeq \bm{0}$. Furthermore, following Eq.~\eqref{eq:Rk}, we obtain $\forall k\geq d, \mG_k=\mA$ a.s. Therefore, we derive that $\forall k\geq d, \tau_k=0$ a.s. Then we conclude that $\E \tau_{k} \leq \left(1-\frac{k}{d}\right)_+\E \tau_0$.
\end{proof}

\subsection{Proof of Theorem \ref{thm:bfgs-update}}\label{app:miss-prove2}
\begin{proof}
	\textbf{For the greedy method}, at step $k \geq 0$, since $\mG_k^{-1}=\mL_k^\top\mL_k$, we obtain
	\begin{equation}\label{eq:greedybfgs}
	\max_{\tilde\vu\in\{\bm{e}_{1}, \dots, \bm{e}_{d}\}} \tilde{\vu}^\top \mL_k^{-\top}\mA^{-1}\mL_k^{-1}\tilde{\vu} \geq \frac{1}{d}\tr\left(\mL_k^{-\top}\mA^{-1}\mL_k^{-1}\right) = \frac{1}{d}\tr\left(\mL_k^{-1}\mL_k^{-\top}\mA^{-1}\right) = \frac{1}{d} \tr(\mG_k\mA^{-1}).
	\end{equation}
	Therefore, the greedy choice of $\vu_k = \mL_k^\top \tilde\vu_k$ with $\tilde\vu_k$ following Eq.~\eqref{eq:greedy-bfgs} leads to
	\begin{equation*}
	    \begin{aligned}
	        \sigma_{k+1} &\stackrel{\eqref{eq:sigma-up2}}{=} \sigma_k - \frac{\tilde\vu_k^\top \mL_k^{-\top} \mA^{-1}\mL_k^{-1}\tilde\vu_k}{\tilde\vu_k^\top \tilde\vu_k} + 1 \stackrel{\eqref{eq:greedy-bfgs}\eqref{eq:greedybfgs}}{\leq} 
	        \sigma_{k} - \frac{1}{d} \tr(\mG_k\mA^{-1}) + 1 \\
	        &\ = \left(1-\frac{1}{d}\right)\sigma_k \leq \cdots \leq  \left(1-\frac{1}{d}\right)^{k+1} \sigma_0, \forall k \geq 0.
	\end{aligned}
	\end{equation*}

	\textbf{For the random method}, we have that $\mathbb{E}_{\tilde\vu} \frac{\tilde\vu\tilde\vu^\top}{\tilde\vu^\top\tilde\vu} = \frac{1}{d}\mI_d$. Hence, we obtain
	\begin{equation}\label{eq:random-bfgs-cal}
	\begin{aligned}
	\mathbb{E}_{\tilde\vu} \frac{\tilde\vu^\top \mL_k^{-\top}\mA^{-1}\mL_k^{-1} \tilde\vu}{\tilde\vu^\top\tilde\vu} &= \tr\left[\mL_k^{-\top}\mA^{-1}\mL_k^{-1} \cdot 
	\mathbb{E}_{\tilde\vu} \frac{\tilde\vu\tilde\vu^\top}{\tilde\vu^\top\tilde\vu}
	\right] {=} \frac{1}{d}\tr(\mL_k^{-\top}\mA^{-1}\mL_k^{-1}) {=} \frac{1}{d}\tr(\mG_k\mA^{-1}),
	\end{aligned}
	\end{equation}
	Therefore, the random choice of $\vu_k = \mL_k^\top \tilde\vu_k$ leads to
	\begin{align*}
	\E_{\vu_k} \sigma_{k+1} &\stackrel{\eqref{eq:sigma-up2}}{=} \sigma_k - \E_{\tilde\vu_k}\frac{\tilde\vu_k^\top \mL_k^{-\top} \mA^{-1}\mL_k^{-1}\tilde\vu_k}{\tilde\vu_k^\top \tilde\vu_k} + 1 \stackrel{\eqref{eq:random-bfgs-cal}}{=} \sigma_k-\frac{1}{d}\tr(\mG_k\mA^{-1}) + 1 = \left(1-\frac{1}{d}\right) \sigma_{k},
	\end{align*}
	Now taking expectation for all $\{\vu_k\}$, we get 
	\begin{equation*}
	\E \sigma_{k+1} = \left(1-\frac{1}{d}\right)\E \sigma_{k} = \dots =   \left(1-\frac{1}{d}\right)^{k+1}\E\sigma_0, \forall k \geq 0.
	\end{equation*}
\end{proof}

\section{Missing Proofs of Quadratic Objective}

\subsection{Proofs of Theorem \ref{thm:random-quad} and Theorem \ref{thm:quad}}\label{app:miss-prove-random2}
\begin{proof}
    From Lemma \ref{lemma:monotonic}, we have $\forall k\geq 0, \mG_k\succeq\mA$. 
    Now for all $k \geq 0$, we denote $\eta_k := \norm{\mA^{-1/2}\mG_k\mA^{-1/2}}$, then $\eta_k\geq 1$ and $\mG_k \preceq \eta_k \mA$. Hence,
	\begin{eqnarray}
	\eta_k-1 = \norm{\mA^{-1/2}(\mG_k-\mA)\mA^{-1/2}} &\leq&  \tr\left(\mA^{-1/2}\left(\mG_k-\mA\right)\mA^{-1/2} \right) \stackrel{\eqref{eq:sigmaA}}{=} \sigma_{k} \label{eq:eta-1} \\ 
	&\stackrel{\eqref{eq:object-f}}{\leq}& \frac{\tr\left(\mG_k-\mA\right)}{\mu} \stackrel{\eqref{eq:tauA}}{=} \frac{\tau_{k}}{\mu}. \label{eq:eta-2} 
	\end{eqnarray}
	By Lemma \ref{lemma:lambda}, we know that for all $k \geq 0$, $\lambda_{k+1} \leq (\eta_k-1)\lambda_k$, then we can take $\rho_k = \eta_k-1$.
	
    \begin{enumerate}
        \item For Broyden method, using Theorem \ref{thm:rand-update}, we get
        \[ \E \rho_k = \E (\eta_k-1) \stackrel{\eqref{eq:eta-1}}{\leq} \E \sigma_{k} \stackrel{\eqref{eq:random-k}}{\leq} \left(1-\frac{1}{d\varkappa}\right)^k \E\sigma_0, \forall k \geq 0. \]
        \item For SR1 method, using Theorem \ref{thm:sr1-update}, we obtain
    	\begin{equation*}
    	\E \rho_k = \E (\eta_k-1) \stackrel{\eqref{eq:eta-2}}{\leq} \frac{\E \tau_{k}}{\mu} \stackrel{\eqref{eq:tau-k}}{\leq} \left(1-\frac{k}{d}\right)_+ \frac{\E\tau_0}{\mu}, \forall k \geq 0.
    	\end{equation*}
    	\item For BFGS method, using Theorem \ref{thm:bfgs-update}, we get
    	\[ \E \rho_k = \E(\eta_k-1) \stackrel{\eqref{eq:eta-1}}{\leq} \E \sigma_{k} \stackrel{\eqref{eq:sigma-k}}{\leq} \left(1-\frac{1}{d}\right)^k \E\sigma_{0}, \forall k \geq 0. \]
    \end{enumerate}
\end{proof}

\subsection{Proof of Corollary \ref{cor:quad-ranom-p}}\label{app:miss-prove-random2.5}
\begin{proof}
    From Theorems \ref{thm:rand-update} and \ref{thm:random-quad}, we can apply Lemma \ref{lemma:p} with $\mX_k=\sigma_k$ or $\rho_k, \forall k \geq 0$ and $a = \E\sigma_0, t = d\varkappa$, i.e., with probability at least $1-\delta/2$, we get
	\begin{equation}\label{eq:sigma-again}
	\sigma_k \leq \frac{2d^2\varkappa^2\E\sigma_0}{\delta} \left(1-\frac{1}{d\varkappa+1}\right)^{k}, \forall k \geq 0,
	\end{equation}
	and with probability at least $1-\delta/2$, we have
	\begin{equation}\label{eq:rho_tel}
	    \rho_k \leq \frac{2d^2\varkappa^2\E\sigma_0}{\delta} \left(1-\frac{1}{d\varkappa+1}\right)^{k}, \forall k \geq 0. 
	\end{equation}
	Noting that $\lambda_{k+1} \leq \rho_k \lambda_k$ by definition, we furtehr obtain with probability at least $1-\delta/2$, 
	\begin{equation}\label{eq:lam_tel}
	    \lambda_{k+1} \leq \frac{2d^2\varkappa^2\E\sigma_0}{\delta} \left(1-\frac{1}{d\varkappa+1}\right)^{k} \lambda_k, \forall k \geq 0, 
	\end{equation}   
	because the latter event (as a set) in Eq.~\eqref{eq:lam_tel} is contained in the former in Eq.~\eqref{eq:rho_tel}.
	
  	Using the fact that for any sequences $\{\bm{\xi}_k\}$ and $\{a_k\}$ of nonnegative random variables and nonnegative reals, respectively, it holds that 
  	\begin{equation}\label{eq:fact}
        \sP\left(\bm{\xi}_{k+1} \leq \left(\prod_{i=0}^k a_i\right) \bm{\xi}_0, \forall k \geq 0 \right) \geq \sP\left(\bm{\xi}_{k+1} \leq a_k \bm{\xi}_k, \forall k \geq 0 \right),
  	\end{equation}
  	because the latter event (as a set) is contained in the former.
  	Thus we can telescope from $k-1$ to $0$ in Eq.~\eqref{eq:lam_tel} for all $k \geq 1$.
  	Then we get with probability at least $1-\delta/2$, 
	\begin{equation}\label{eq:lam-again}
	    \lambda_{k} \stackrel{\eqref{eq:lam_tel}\eqref{eq:fact}}{\leq} \left(\frac{2d^2\varkappa^2\E\sigma_0}{\delta}\right)^{k}\left(1-\frac{1}{d\varkappa+1}\right)^{k(k-1)/2} \lambda_0, \forall k \geq 1,
	\end{equation}
	and the above inequality trivially holds for $k=0$.
	
	Finally, applying the union bound again makes both Eqs.~\eqref{eq:sigma-again} and \eqref{eq:lam-again} hold with probability at least $1-\delta$.
\end{proof}

\section{Missing Proofs of General Objective}

\subsection{Proofs of Lemma \ref{lemma:random-gel} and Lemma \ref{lemma:gel-bfgs-sr1}}\label{app:miss-prove-random3}
Since the proofs of Lemma \ref{lemma:random-gel} and Lemma \ref{lemma:gel-bfgs-sr1} have many overlapping parts, we recombine them into a lemma below.
\begin{lemma}[Restating]\label{lemma:com-exp}
	Suppose in Algorithm \ref{algo:general-random} or Algorithm \ref{algo:general-quasi}, a random initialization $\mG_0$ always satisfies $\nabla^2 f(\vx_0) \preceq \mG_0 \preceq \eta \nabla^2 f(\vx_0)$ for some $\eta \geq 1$, and the (random) initial point $\vx_0$ is sufficiently close to the solution:
	\begin{equation}\label{eq:init}
	    M\lambda_0 \leq \frac{\ln2}{4\eta(2 c d+1)}.
	\end{equation}
	Then for all $k \geq 0$, we have $\nabla^2 f(\vx_k) \preceq \mG_k \preceq (1+\delta_k)\nabla^2 f(\vx_k)$, where $\delta_k$ is a certain random variable such that
	\begin{equation}\label{eq:delta}
	    \E \delta_k \leq 2c d \eta \left(1-\frac{1}{t}\right)^k,
	\end{equation}
	and $\lambda_{k+1} \leq \rho_k\lambda_k$, where $\rho_k$ is a certain random variable such that
	\begin{equation}\label{eq:lam}
	    \E \rho_k \leq 2c d\eta \left(1-\frac{1}{t}\right)^k.
	\end{equation}
	Here, the choices of $\delta_k,t,c$ are inherited from Lemma \ref{lemma:general-update}.
\end{lemma}

\begin{proof}
	The derivation is the same as Theorem 4.9 in the work of  \citet{rodomanov2021greedy} by using Lemma \ref{lemma:general-update}. In order to providing better paper readability, we also show the detail below. 
	
	In view of Theorem \ref{app:aux-thm}, because the initial condition $\frac{\ln2}{4\eta(2c d + 1)}\leq \frac{\ln\frac{3}{2}}{4\eta}$,  we get $\nabla^2 f(\vx_k) \preceq \mG_k, \forall k \geq 0$, and also 
	\begin{equation}\label{eq:m-lam-ran}
	M\sum_{i=0}^k \lambda_i \stackrel{\eqref{eq:lambda-a}}{\leq} M\lambda_0\sum_{i=0}^{k}\left(1-\frac{1}{2\eta}\right)^i \leq 2\eta M \lambda_0 \stackrel{\eqref{eq:init}}{\leq} \frac{\ln2}{2(2 c d+1)}.
	\end{equation}
	Moreover, we need to underline that Eq.~\eqref{eq:m-lam-ran} holds with no expectation, which is crucial in the following derivation.
	Next, let us show that $\forall k\geq 0$,
	\begin{equation}\label{eq:sig-lam-ran}
		\E \theta_k \leq 2c d\eta\left(1-\frac{1}{t}\right)^k, \ \mathrm{ where } \ \theta_k := \delta_k+2cd M\lambda_k.
	\end{equation}
	Indeed, because $\nabla^2 f(\vx_0) \preceq \mG_0 \preceq \eta \nabla^2 f(\vx_0)$, we have
	\[ \E \sigma_0 \stackrel{\eqref{eq:lam-sig}}{=} \E \tr\left(\nabla^2 f(\vx_0)^{-1}\mG_0\right)-d \leq d\left(\eta-1\right), \]
	and
	\[ \E \; \frac{d\varkappa \tau_0}{\tr[\nabla^2 f(\vx_0)]} \stackrel{\eqref{eq:lam-sig}}{=}  d\varkappa \cdot \frac{\E \tr[\mG_0-\nabla^2 f(\vx_0)]}{\tr[\nabla^2 f(\vx_0)]} \leq \varkappa d (\eta-1). \]
	Hence, following the choice of $\delta_0$ from Lemma \ref{lemma:general-update}, we derive that
	\begin{equation}\label{eq:init-theta-ran}
    	\E \theta_0 = \E \delta_0+2c d M\lambda_0
    	\stackrel{\eqref{eq:init}}{\leq} c d \left(\eta-1\right) + \frac{2c d }{2c d + 1} \cdot \frac{\ln2}{4\eta} < c d \left(\eta-1\right) + 1 \leq c d \eta.
	\end{equation} 
	Therefore, for $k=0$, Eq.~\eqref{eq:sig-lam-ran} is satisfied. 
	
	Now we consider the index $k+1 \geq 1$.
	Because Eq.~\eqref{eq:m-lam-ran} shows that $M\lambda_k\leq 2$, we can employ Lemma \ref{lemma:op-ud}, which leads to
	\begin{equation}\label{eq:r-lam-ran}
		r_k:=\|\vx_{k+1}-\vx_k\|_{\vx_k} \stackrel{\eqref{eq:lam-eta}}{\leq} \lambda_k.
	\end{equation}
	Then by Lemma \ref{lemma:general-update}, we have
	\begin{equation}\label{eq:delta-k-again}
	    \forall k\geq 0, \nabla^2 f(\vx_k) \preceq \mG_k \preceq (1+\delta_k)\nabla^2 f(\vx_k)
	\end{equation}
	with 
	\begin{equation}\label{eq:sigma-k-up-all}
    	\E_{\vu_k} \delta_{k+1} \stackrel{\eqref{eq:delta-up}}{\leq} \left(1-\frac{1}{t}\right) \left(1+Mr_k\right)^2
    	\left(\delta_k+2c d M r_k\right)\stackrel{\eqref{eq:r-lam-ran}}{\leq} \left(1-\frac{1}{t}\right)e^{2M\lambda_k}\theta_k.
	\end{equation}
	Moreover, using Lemma \ref{lemma:op-ud} again, we obtain
	\begin{equation}\label{eq:lam-re-ran}
    	\lambda_{k+1} \stackrel{\eqref{eq:lam-eta}\eqref{eq:delta-k-again}}{\leq} \left(1+\frac{M\lambda_k}{2}\right) \frac{\delta_k+\frac{M\lambda_k}{2}}{1+\delta_k}\lambda_k \leq \left(1+\frac{M\lambda_k}{2}\right) \theta_k \lambda_k \leq e^{2M\lambda_k}\theta_k\lambda_k. 
	\end{equation}
	Note that $\frac{1}{2} \leq 1-\frac{1}{t}$ because $t \geq d \geq 2$. 
	Combining Eqs.~\eqref{eq:sigma-k-up-all} and \eqref{eq:lam-re-ran}, we obtain
	\begin{align*}
    	\E_{\vu_k} \theta_{k+1} &\leq \left(1-\frac{1}{t}\right)e^{2M\lambda_k}\theta_k+ 2c d M e^{2M\lambda_k} \theta_k \lambda_k \\
    	&\leq \left[\left(1-\frac{1}{t}\right) + \left(1-\frac{1}{t}\right) 4c d M \lambda_k\right]e^{2M\lambda_k}\theta_k = \left(1-\frac{1}{t}\right)\left(1+4c d M\lambda_k\right)e^{2M\lambda_k}\theta_k \\
        &\leq \left(1-\frac{1}{t}\right)e^{2(2c d+1)M\lambda_k}\theta_k \stackrel{\eqref{eq:lambda-a}}{\leq} \left(1-\frac{1}{t}\right)e^{2(2cd+1) M\lambda_0\left(1-\frac{1}{2\eta}\right)^k}\theta_k.
    \end{align*}
	Therefore, by taking expectation for all randomness, we obtain
	\begin{align}
		\E \theta_{k+1} & \leq \left(1-\frac{1}{t}\right) e^{2(2c d+1) M\lambda_0\left(1-\frac{1}{2\eta}\right)^k} \E \theta_{k} \leq \cdots \stackrel{(*)}{\leq} \left(1-\frac{1}{t}\right)^{k+1} e^{2(2c d+1) M\lambda_0\sum_{i=0}^k\left(1-\frac{1}{2\eta}\right)^i} \E \theta_{0} \nonumber \\ 
		& \leq \left(1-\frac{1}{t}\right)^{k+1} e^{4\eta(2c d+1) M\lambda_0} \E \theta_{0} \stackrel{\eqref{eq:m-lam-ran}\eqref{eq:init-theta-ran}}{\leq} 2c d\eta\left(1-\frac{1}{t}\right)^{k+1}. \label{eq:theta-2-ran}
	\end{align}
	Thus, Eq.~\eqref{eq:sig-lam-ran} is proved for the index $k+1$. 
	Therefore, Eq.~\eqref{eq:sig-lam-ran} holds for all $k \geq 0$.
	
	Now we prove the bound of $\delta_k$ and $\rho_k$ based on Eq.~\eqref{eq:sig-lam-ran}.
	Since $\lambda_k\geq 0$, we have
	\[ \E \delta_k \leq \E \delta_k+2c d M\lambda_k \stackrel{\eqref{eq:sig-lam-ran}}{=} \E \theta_k \stackrel{\eqref{eq:sig-lam-ran}}{\leq} 2c d\eta\left(1-\frac{1}{t}\right)^k, \forall k \geq 0. \]
	Finally, we adopt $\rho_k=e^{2M\lambda_k}\theta_k$ based on Eq.~\eqref{eq:lam-re-ran} for all $k \geq 0$. Then we get
	\begin{eqnarray*}
		\E \rho_k &=& \E e^{2M \lambda_k}\theta_k \leq \E e^{2(2c d+1)M\lambda_k}\theta_k \stackrel{\eqref{eq:lambda-a}}{\leq} e^{2(2c d+1)M\lambda_0 \left(1-\frac{1}{2\eta}\right)^k} \E \theta_k \\ 
		&\stackrel{\eqref{eq:theta-2-ran}}{\leq}& \left(1-\frac{1}{t}\right)^k e^{2(2c d+1) M\lambda_0 \sum_{i=0}^{k}\left(1-\frac{1}{2\eta}\right)^i} \theta_{0} \stackrel{\eqref{eq:m-lam-ran}\eqref{eq:init-theta-ran}}{\leq} 2c d\eta\left(1-\frac{1}{t}\right)^k,
	\end{eqnarray*}
	where we use the inequality $(*)$ in Eq.~\eqref{eq:theta-2-ran} with subscript $k-1$.
	Thus, Eqs.~\eqref{eq:delta} and \eqref{eq:lam} are proved.
\end{proof}

\subsection{Proofs of Theorem \ref{thm:random-p} and Theorem \ref{thm:gre-ran-p}}\label{app:miss-prove-random4}
\begin{proof}
    The results of greedy methods directly follow Lemma \ref{lemma:gel-bfgs-sr1},
	and the proof of randomized methods is the same as the proof of Corollary \ref{cor:quad-ranom-p} by applying Lemma \ref{lemma:p} with $\mX_k=\delta_k$ or $\rho_k, \forall k \geq 0$, so we omit it.
\end{proof}

\subsection{Proofs of Corollary \ref{cor:random-gel} and Corollary \ref{cor:gel-sr1-bfgs} }\label{app:miss-prove-random5}
Since the proofs of Corollary \ref{cor:random-gel} and Corollary \ref{cor:gel-sr1-bfgs} also have many overlapping parts, we also recombine them into a corollary below.

\begin{corollary}[Restating]
	Suppose in Algorithm \ref{algo:general-random} or Algorithm \ref{algo:general-quasi}, $\mG_0=L \mI_d$ and $\vx_0$ satisfies $M\lambda_0 \leq (\ln\frac{3}{2})/(4\varkappa)$.
	Then we could obtain for randomized methods, with probability at least $1-\delta$ over the random directions $\{\vu_k\}$,
	\[ \lambda_{k_0+k} \leq \left(1-\frac{1}{t+1}\right)^{k(k-1)/2} \cdot \left(\frac{1}{2}\right)^k \cdot \left(1-\frac{1}{2\varkappa}\right)^{k_0} \cdot \lambda_0, \; \forall k \geq 0, \]
	where $k_0 = O\left((t+\varkappa)\ln(c d\varkappa t/\delta)\right)$,
	and for greedy methods,
	\[ \lambda_{k_0+k} \leq \left(1-\frac{1}{t}\right)^{k(k-1)/2} \cdot \left(\frac{1}{2}\right)^k \cdot \left(1-\frac{1}{2\varkappa}\right)^{k_0} \cdot \lambda_0, \text{ for all } k \geq 0, \]
	where $k_0=O\left((t+\varkappa)\ln(c d\varkappa)\right)$.
	Here, the choices of $t, c$ are inherited from Lemma \ref{lemma:general-update}.
\end{corollary}

\begin{proof}
	We note that $\mG_0 =L\mI_d$ gives $\nabla^2 f(\vx_0) \preceq \mG_0 \preceq \varkappa \nabla^2 f(\vx_0)$.
	Since the initial point $\vx_0$ is sufficiently close to the solution: $M\lambda_0 \leq \frac{\ln\frac{3}{2}}{4\varkappa}$, Theorem \ref{app:aux-thm} holds for $\eta=\varkappa$.
	
	Denote by $k_1\geq 0$ the number of the first iteration, for which
	\[ \left(1-\frac{1}{2\varkappa}\right)^{k_1} \leq \frac{2}{3} \cdot \frac{1}{2c d+1}. \]
	Clearly, $k_1 \leq 2 \varkappa \ln(3c d+2) + 1$. 
	Then from Theorem \ref{app:aux-thm}, we obtain
	\begin{equation*}
	    \nabla^2 f(\vx_{k_1}) \preceq \mG_{k_1} \preceq \frac{3\varkappa}{2} \nabla^2 f(\vx_{k_1}) \text{ and } M\lambda_{k_1} \stackrel{\eqref{eq:lambda-a}}{\leq} M\left(1-\frac{1}{2\varkappa}\right)^{k_1}\lambda_0 \leq \frac{\ln2}{6\varkappa\left(2c d+1\right)},
	\end{equation*}
	which satisfies the initial condition in Lemma \ref{lemma:com-exp} with $\eta=\frac{3}{2} \varkappa$.
    That is, $k_1$ is the number of iterations for entering the region of superlinear convergence.
	Hence, from the (random) initialization $\mG_{k_1}$ and $\vx_{k_1}$, by Lemma \ref{lemma:com-exp}, we have $\forall k \geq k_1, \lambda_{k+1} \leq \rho_k \lambda_k$ with 
	\[ \E \rho_k \leq 3c d\varkappa \left(1-\frac{1}{t}\right)^{k-k_1}. \]
	For randomized method, applying Lemma \ref{lemma:p} with $\mX_k=\rho_k, \forall k \geq 0$ and $a = 3c d\varkappa $, we can obtain with probability at least $1-\delta$, 
	\begin{equation*}
	    \rho_k \leq \frac{3c d \varkappa t^2}{\delta} \left(1-\frac{1}{t+1}\right)^{k-k_1}, \forall k \geq k_1, 
	\end{equation*}
	which leads to with probability at least $1-\delta$, 
    \begin{equation}\label{eq:lam-ran-again}
	    \lambda_{k+1} \leq \rho_k\lambda_k \leq \frac{3c d \varkappa t^2}{\delta} \left(1-\frac{1}{t+1}\right)^{k-k_1}\lambda_k, \forall k \geq k_1. 
	\end{equation}
	Denote by $k_2\geq 0$ the number of the first iteration, for which
	\[ \frac{3c d \varkappa t^2}{\delta}\left(1-\frac{1}{t+1}\right)^{k_2} \leq \frac{1}{2}. \]
	Clearly, $k_2 \leq (t+1) \ln(6c d \varkappa t^2/\delta)+1$, which is the number of iterations to make the superlinear rate `valid'.
	Applying Eq.~\eqref{eq:lam-ran-again} only to all $k \geq k_1+k_2$ (which includes the event in Eq.~\eqref{eq:lam-ran-again}), we still get with probability at least $1-\delta$,
	\[ \lambda_{k_1+k_2+k+1} \leq \frac{3c d \varkappa t^2}{\delta} \left(1 {-} \frac{1}{t {+} 1}\right)^{k_2+k} \lambda_{k_1+k_2+k} \leq \left(\frac{1}{2}\right) \cdot \left(1 {-} \frac{1}{t {+} 1}\right)^{k} \lambda_{k_1+k_2+k}, \forall k \geq 0. \]
	Therefore, by the fact of Eq.~\eqref{eq:fact}, we also have with probability at least $1-\delta$,
	\begin{equation}\label{eq:p-lam}
	    \lambda_{k_1+k_2+k} \leq \left(1-\frac{1}{t+1}\right)^{k(k-1)/2} \left(\frac{1}{2}\right)^k \, \lambda_{k_1+k_2},  \forall k \geq 0.
	\end{equation}
	Moreover, using Theorem \ref{app:aux-thm} again, we have the deterministic result that
	\begin{equation}\label{eq:k1k2}
	    \lambda_{k_1+k_2} \stackrel{\eqref{eq:lambda-a}}{\leq} \left(1-\frac{1}{2\varkappa}\right)^{k_1+k_2}\lambda_0.
	\end{equation}
	Noting that the event of Eq.~\eqref{eq:p-lam} is contained in the following event based on Eq.~\eqref{eq:k1k2}, we finally obtain with probability at least $1-\delta$, 
	\[ \lambda_{k_0+k} \leq  \left(1-\frac{1}{t+1}\right)^{k(k-1)/2} \cdot \left(\frac{1}{2}\right)^k  \cdot \left(1-\frac{1}{2\varkappa}\right)^{k_0}\lambda_0, \forall k\geq 0, \]
	where $k_0 = k_1 + k_2 \leq 2 \varkappa \ln(3c d+2) +(t+1) \ln(6c d \varkappa t^2/\delta)+2 =  O\left((t+\varkappa)\ln(c d\varkappa t/\delta)\right)$.
	
	Similarly, we can get the results for greedy methods with $k_2 \leq t \ln(6c d \varkappa)+1$, leading to $k_0=k_1+k_2 \leq 2 \varkappa \ln(3c d+2) + t \ln(6c d \varkappa)+2=O\left((t+\varkappa)\ln(c d\varkappa)\right)$.
\end{proof}

Particularly, following the choices of $t,c$ in Lemma \ref{lemma:general-update}, for the random Broyden method in Algorithm \ref{algo:general-random}, one has $t=d\varkappa, c=1$, which leads to $k_0=O\left(d\varkappa\ln(d\varkappa/\delta)\right)$;
for the greedy/random BFGS method in Algorithm \ref{algo:general-quasi}, one has $t=d, c=1$, which leads to $k_0=O\left((d+\varkappa)\ln(d\varkappa)\right)$ and $O\left((d+\varkappa)\ln(d\varkappa/\delta)\right)$; 
for the greedy/random SR1 method in Algorithm \ref{algo:general-quasi}, one has $t=d, c=\varkappa$, which leads to $k_0=O\left((d+\varkappa)\ln(d\varkappa)\right)$ and $O\left((d+\varkappa)\ln(d\varkappa/\delta)\right)$.
	
\section{Proof of Proposition \ref{prop:l}}\label{app:propl}
\begin{proof}
    Denote $\mH_k:=\mG_k^{-1}, \forall k\geq 0$.
    From the inverse update rule in Eq.~\eqref{eq:invbfgs}, we  obtain
    \[ \mH_{k+1} = \mQ_k^\top \mH_k\mQ_k = \left(\mL_k\mQ_k\right)^\top \left(\mL_k\mQ_k\right), \; \mQ_k := \mI_d-\dfrac{\mA \vu_k\vu_k^\top}{\vu_k^\top \mA\vu_k}+\frac{\mG_k\vu_k\vu_k^\top}{\sqrt{\vu_k^\top\mA \vu_k \cdot \vu_k^\top\mG_k\vu_k}}. \]
    Indeed, we have 
    \begin{eqnarray*}
    \lefteqn{\mQ_k^\top \mH_k\mQ_k - \left(\mI_d-\frac{\vu_k\vu_k^\top\mA}{\vu_k^\top\mA\vu_k}\right) \mH_k \left(\mI_d-\frac{\mA \vu_k\vu_k^\top}{\vu_k^\top \mA\vu_k}\right)} \\
    &= & 2\left(\mI_d-\frac{\vu_k\vu_k^\top\mA}{\vu_k^\top\mA\vu_k}\right) \cdot \frac{\mH_k \mG_k\vu_k\vu_k^\top}{\sqrt{\vu_k^\top\mA \vu_k \cdot \vu_k^\top\mG_k\vu_k}} + \frac{\vu_k\vu_k^\top\mG_k\mH_k\mG_k\vu_k\vu_k^\top}{\vu_k^\top\mA\vu_k\cdot \vu_k^\top\mG_k\vu_k} \\
    &=  & 2\left(\mI_d-\frac{\vu_k\vu_k^\top\mA}{\vu_k^\top \mA\vu_k}\right) \vu_k \cdot \frac{\vu_k^\top}{\sqrt{\vu_k^\top\mA \vu_k \cdot \vu_k^\top\mG_k\vu_k}} + \frac{\vu_k\vu_k^\top}{\vu_k^\top\mA\vu_k} =\frac{\vu_k\vu_k^\top}{\vu_k^\top\mA\vu_k},
    \end{eqnarray*}
    which is identical to Eq.~\eqref{eq:invbfgs}. 
    
    Next, note that $\mH_k \succ 0$. Thus the square matrix $\mL_k$ is also nonsingular, leading to $\mL_k\mG_k \mL_k^\top =\mL_k\mL_k^{-1}\mL_k^{-\top} \mL_k^\top=\mI_d$.
    Hence, from $\vu_k=\mL_k^\top \tilde\vu_k$, we get $\mL_k\mG_k \vu_k = \mL_k\mG_k \mL_k^\top\tilde\vu_k=\tilde\vu_k$, and 
    $\vu_k^\top\mG_k \vu_k = \tilde\vu_k^\top \mL_k\mG_k \mL_k^\top\tilde\vu_k=\norm{\tilde\vu_k}^2$.
    Therefore, by the expression of $\vv_k$ in Proposition \ref{prop:l}, we obtain
    \begin{align*}
        \mL_{k+1} &= \mL_k\mQ_k = \mL_k - \frac{\mL_k\mA\vu_k}{\vu_k^\top\mA \vu_k} \cdot \vu_k^\top + 
        \frac{\mL_k\mG_k\vu_k\vu_k^\top}{\sqrt{\vu_k^\top\mA \vu_k \cdot \vu_k^\top \mG_k\vu_k}} \\
        &= \mL_k - \mL_k\mA\vu_k \cdot \frac{\vu_k^\top}{\vu_k^\top\mA \vu_k} + 
        \frac{\tilde\vu_k}{\norm{\tilde\vu_k}} \cdot \sqrt{\vu_k^\top \mA\vu_k} \cdot \frac{\vu_k^\top}{\vu_k^\top\mA \vu_k} = \mL_k - \frac{\left(\mL_k\mA\vu_k-\vv_k\right)\vu_k^\top}{\vu_k^\top\mA \vu_k}.
    \end{align*}
\end{proof}

\vskip 0.2in
\bibliography{reference}

\end{document}